\definecolor{ballblue}{rgb}{0.0, 0.75, 1.0}
\definecolor{green-1}{rgb}{0.0, 0.63, 0.07}
\definecolor{mygray}{rgb}{0.9,0.9,0.9}
\def\@cite#1#2{[\textbf{#1\if@tempswa , #2\fi}]}
\newcommand{\footreferences}{}
\newcommand{\emo}{}
\title{\bf {A digression on Hermite polynomials}}
\author{\small KEITH Y. PATARROYO \vspace{-1ex}\\ \small
\texttt{\href{mailto:kypatarroyot@unal.edu.co}{kypatarroyot@unal.edu.co}}/\texttt{\href{mailto:kypatarroyot@unal.edu.co}{keith.patarroyo@umontreal.ca}}}
\date{\small \today}
\newcounter{exo}[section]
\newenvironment{question*}
    {\begin{center}
    \begin{tabular}{p{3cm}p{10cm}}
    \textbf{Question:} & \itshape
    }
    { 
    \end{tabular} 
    \end{center}
    }
\newenvironment{question**}
    {\begin{center}
    \begin{tabular}{p{3cm}p{10cm}}
    \textbf{Question*:} & \itshape
    }
    { 
    \end{tabular} 
    \end{center}
    }
\newif\ifblog
\newif\iftex
\newif\ifkeith
\def\emph#1{\textit{#1}}
\def\image#1#2#3{\begin{center}\includegraphics[#1pt]{#3}\end{center}}
\newtheorem{exercise}{Exercise}
\newtheorem{note}{Note}
\newtheorem{theorem}{Theorem}
\newtheorem{lemma}[theorem]{Lemma}
\newtheorem{definition}[theorem]{Definition}
\newtheorem{example}{Example}
\newenvironment{proof}{\noindent {\sc Proof:}}{ $\bigbox$ \medskip}
\begin{document}
\iftex
\pdfbookmark[1]{Titlepage}{title}
\maketitle
\label{page1}
\pdfbookmark[1]{Abstract}{abstract}
\begin{abstract}
\footnotesize
Orthogonal polynomials are of fundamental importance in many fields of mathematics and science, therefore the study of a particular family is always relevant. In this manuscript, we present a survey of some general results of the Hermite polynomials and show a few of their applications in the connection problem of polynomials, probability theory and the combinatorics of a simple graph. Most of the content presented here is well known, except for a few sections where we add our own work to the subject, nevertheless, the text is meant to be a self-contained personal exposition.
\end{abstract}
\pdfbookmark[1]{\contentsname}{contents}
{
  \hypersetup{linkcolor=black}
  \tableofcontents
}
\fi
\ifblog
Recently a friend of mine made a short document with regards to the \href{https://en.wikipedia.org/wiki/Quantum_harmonic_oscillator}{Hermite polynomials} and the \href{https://en.wikipedia.org/wiki/Hermite_polynomials}{quantum harmonic oscillator}. He suggested me to \href{https://goo.gl/3fY9Jc}{publish it} on my website(Spanish)\cite{vlado}, this motivated me to study a little about the Hermite polynomials and then make my own text about them. 

Yesterday I uploaded to the arXiv my manuscript “A digression on Hermite polynomials“. Since I have mostly seen these polynomials applied to \href{https://en.wikipedia.org/wiki/Quantum_mechanics}{quantum mechanics,} I decided to emphasize the document and this post on other areas of application with a small bias on its  probabilistic usage. There and here, I  survey some general properties of the polynomials and in the end, some applications to the theory of polynomials, probability, and combinatorics are shown. Most of the content is well-known, except for a few sections where I added my own work to the subject, nevertheless, everything is meant to be self-contained.
\fi

\section{A general overview of the polynomials}

We start with the definition of the  polynomials and some details regarding the notation. Afterward, we pursue  a construction and an  explicit expression for them.

\subsection{Definition}\label{sesu:he-def-gen-fun}

The \emph{Chebyshev}\footnote{We are calling the functions $\{He_n(x)\}_{n=0}^{\infty}$, \emph{Chebyshev-Hermite polynomials} following the \iftex convention \fi  \ifblog\href{http://www.oxfordreference.com/view/10.1093/oi/authority.20110803095604775}{convention}\fi of \cite{kendall} and \cite{blinnikov} to differentiate them from $\{H_n(x)\}_{n=0}^{\infty}$ (see Note \ref{note:herm-notation} in the text). However they are most commonly known by \emph{Hermite polynomials} alone(hence the \iftex\hyperref[page1]{\bf{title}} \fi\ifblog <a href="#post-768">title</a>\fi of this article), \iftex \linebreak \fi e.g. see \cite{nist-sp-fun},\cite{magnus},\cite{johnson}. It is our hope that \emph{"Chebyshev-Hermite polynomials"} becomes generally accepted since at the present time it is only known by some people in the statistical community \cite{kotz}.  Also, note that these polynomials are NOT the Hermite-Chebyshev polynomials defined in \cite{batahan}.\emo}-\iftex Hermite polynomials \fi  \ifblog\href{https://en.wikipedia.org/wiki/Hermite_polynomials}{Hermite polynomials}\fi $\{He_n(x)\}_{n=0}^{\infty}$ arise naturally when we take the ratio of two \iftex standard Gaussian distributions\cite{johnson}\fi \ifblog\href{https://en.wikipedia.org/wiki/Normal_distribution#Standard_normal_distribution}{standard Gaussian distributions}\fi , this motivates the following definition. 

\begin{definition} \label{def:herm-prob-gen}
The Chebyshev-Hermite polynomials $\{He_n(x)\}_{n=0}^{\infty}$  are defined in the interval $-\infty < x < \infty$ and they are generated by the ratio of a displaced standard Gaussian distribution and a standard Gaussian distribution centered in zero,

\begin{equation}  \label{eq:herm-prob-gen}
\frac{\phi(x-t)}{\phi(x)}= e^{xt-\frac{t^2}{2}} = \sum_{n=0}^\infty He_n(x)\frac{t^n}{n!} =G(x,t).
\end{equation}

\end{definition}

This definition tells us how to compute the polynomials and it shows that the Chebyshev-Hermite polynomials have to some extent a connection with the Gaussian distribution. Furthermore we can see right away that this  \iftex \emph{generating function} \fi \ifblog \href{https://en.wikipedia.org/wiki/Generating_function#Ordinary_generating_function_(OGF)}{generating function} \fi \eqref{eq:herm-prob-gen} is quite similar to the moment generating function $m_Y(t)= e^{xt+\frac{t^2}{2}}$ of a random variable $Y\stackrel{d}{=}\mathcal{N}(x,1)$ with normal distribution. A substantial relation of $ He_n(x)$ with $Y$ and ${\bf E}[Y^n](x)$ will arise when we consider the \emph{integral representation} and the \emph{connection problem} of the polynomials in Sections \ref{sesu:her-int-rep} and \ref{sesu:her-con-pro} respectively. 

\begin{note}\label{note:herm-notation}
Hermite polynomials are standardized in two different ways depending on their use, they are: the \iftex\underline{\textbf{Chebyshev}-Hermite polynomials,} \fi \ifblog\href{https://en.wikipedia.org/wiki/Hermite_polynomials#Definition}{Chebyshev-Hermite polynomials,}\fi$\{He_n(x)\}_{n=0}^{\infty}$ (regularly applied in probability \cite{johnson}) and the \iftex\underline{Hermite polynomials,} \fi \ifblog\href{https://en.wikipedia.org/wiki/Hermite_polynomials#Definition}{Hermite polynomials,}\fi $\{H_n(x)\}_{n=0}^{\infty}$ (frequently applied in physics \cite{vlado},\cite{hassani}). Sometimes the naming of the polynomials change from author to author, however the mathematical notation for both is currently well distinct in the technical literature \cite{nist-sp-fun}. They both share the following relationship,

\begin{equation} \label{eq:hermite-conv}
H_n(x) = 2^{\frac{n}{2}} He_n(\sqrt{2}x).
\end{equation}
\iftex
\vspace{8pt}
\fi
\end{note}

\subsection{Construction by a Gram-Schmidt algorithm}
\label{sesu:he-def-gram-schmidt}

One way to naturally obtain \textit{classical}\footnote{ The Chebyshev-Hermite polynomials are  a  \iftex \textit{classical} orthogonal polynomial \fi \ifblog\href{https://en.wikipedia.org/wiki/Classical_orthogonal_polynomials}{classical orthogonal polynomial}\fi  sequence, Why classical? Are there any other? e.g. \textit{quantum}?, see \cite{hassani} or \cite{ismail} for a precise definition.\emo} \iftex orthogonal polynomials \fi \ifblog\href{http://www.damtp.cam.ac.uk/user/na/PartIB/Lect03.pdf}{orthogonal polynomials}\fi is by constructing a set of polynomials $\{p_n(x)\}_{n=0}^{\infty}$ with $\text{deg}[p_n(x)] = n$ such that they are \iftex orthogonal\cite{nist-sp-fun} \fi\ifblog \href{https://en.wikipedia.org/wiki/Orthogonality#Orthogonal_functions}{orthogonal}\fi under an inner product with a weight function $w(x)$  on the interval $(a, b)$.  The following theorem guarantees that such set of polynomials exist and are unique, moreover it tell us a way to iteratively compute them.

\begin{theorem}\label{th:gram-schmidt}
Given an  inner product with a weight function $w(x)$ on an interval $(a, b)$, there exists a sequence of  polynomials $\{p_n(x)\}_{n=0}^{\infty}$ orthogonal with respect to the inner product. This sequence is uniquely determined up to constant factors\footnote{As we will see later the Chebyshev-Hermite polynomials are  \iftex monic\cite{nist-sp-fun}\fi \ifblog\href{https://en.wikipedia.org/wiki/Monic_polynomial}{monic}\fi, Exercise \ref{ex:her-prop}, and according to Theorem \ref {th:gram-schmidt} they are only undetermined up to a constant factor.  Now lets consider $2^{-n}H_n(x) = 2^{\frac{-n}{2}} He_n(\sqrt{2}x)$, these polynomials are monic too !! What is wrong? Is the weight function for $\{H_n(x)\}_{n=0}^{\infty}$ equal to the weight function for $\{He_n(x)\}_{n=0}^{\infty}$?\emo}.
\end{theorem}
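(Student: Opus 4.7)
The plan is to carry out Gram-Schmidt orthogonalization on the monomial basis $\{1,x,x^2,\dots\}$ of the space of polynomials, equipped with the given inner product $\langle f,g\rangle=\int_a^b f(x)g(x)w(x)\,dx$. First I would define $p_0(x)=1$ and then, recursively for $n\ge 1$,
\begin{equation*}
p_n(x)=x^n-\sum_{k=0}^{n-1}\frac{\langle x^n,p_k\rangle}{\langle p_k,p_k\rangle}\,p_k(x).
\end{equation*}
The presupposition here is that $w(x)\ge 0$ on $(a,b)$ is such that the inner product is positive definite on polynomials and that all moments $\int_a^b x^n w(x)\,dx$ exist, so that each $\langle p_k,p_k\rangle>0$ and every coefficient above is finite. (For the Chebyshev-Hermite case, $w(x)=e^{-x^2/2}$ on $(-\infty,\infty)$ makes this automatic.)

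Next I would verify the three required properties by induction on $n$. (i) Degree: since the subtracted sum only involves $p_k$ with $k<n$, the leading term $x^n$ survives, so $\deg p_n=n$ and $p_n$ is in fact monic. (ii) Orthogonality: for any $j<n$, taking $\langle\cdot,p_j\rangle$ of the defining equation and using the inductive hypothesis $\langle p_k,p_j\rangle=\langle p_j,p_j\rangle\delta_{kj}$ collapses the sum to a single term that exactly cancels $\langle x^n,p_j\rangle$. (iii) Nonvanishing norms: $p_n$ is a nonzero polynomial of degree $n$, hence not identically zero on $(a,b)$, so positive definiteness of the inner product gives $\langle p_n,p_n\rangle>0$, allowing the recursion to proceed.

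For uniqueness up to constant factors, suppose $\{q_n\}_{n=0}^\infty$ is any other sequence with $\deg q_n=n$ and pairwise orthogonal under $\langle\cdot,\cdot\rangle$. Since $\{p_0,\dots,p_n\}$ spans the space of polynomials of degree $\le n$, I can write $q_n=\sum_{k=0}^n c_k p_k$. Taking $\langle q_n,p_j\rangle$ for each $j<n$, orthogonality of the $q$-sequence against lower-degree polynomials (which lie in the span of $p_0,\dots,p_{n-1}$) forces $c_j\langle p_j,p_j\rangle=0$, hence $c_j=0$. Thus $q_n=c_n p_n$, proving uniqueness.

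The main obstacle, and really the only subtle point, is hidden in the hypothesis: one needs the inner product to be genuinely positive definite on polynomials and all the requisite moments to be finite, otherwise $\langle p_k,p_k\rangle$ could vanish and the induction would stall. The theorem statement does not spell this out explicitly, so in the write-up I would note the implicit assumption on $w(x)$ and observe that it is satisfied for the weight $e^{-x^2/2}$ relevant to $\{He_n(x)\}$. Everything else is bookkeeping of the standard Gram-Schmidt argument.
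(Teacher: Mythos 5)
Your proposal is correct and follows exactly the route the paper outlines: Gram--Schmidt orthogonalization of the monomial sequence $\{1,x,x^2,\dots\}$ with respect to the weighted inner product, the paper itself deferring the details to its reference [Sueli]. You have simply filled in the bookkeeping (and usefully flagged the implicit positive-definiteness/moment hypotheses on $w(x)$), so there is nothing to correct.
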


\begin{proof}
See \cite{Sueli} for complete details on the proof. Here we give an outline on the proof, this requires constructing the polynomials using a \iftex Gram-Schmidt \fi \ifblog\href{https://en.wikipedia.org/wiki/Gram\%E2\%80\%93Schmidt_process}{Gram-Schmidt}\fi  like algorithm to orthogonalize the linear independent  sequence $\{1,x,x^2,...\}$ using a determined inner product with a weight function $w(x)$. This construction is not the most efficient approach to build the polynomials, since it requires lots of computations, see \cite{Sueli} for an \iftex example \fi \ifblog\href{https://homepage.tudelft.nl/11r49/documents/wi4006/orthopoly.pdf}{example}\fi  of this procedure, later we will see better constructions algorithms. 
\end{proof}

If we choose the weight function to be an unnormalized standard Gaussian distribution, $w(x)=e^{\frac{-x^2}{2}}$\iftex\footnotemark\fi \ifblog \footnote{Why choose $e^{\frac{-x^2}{2}}$ as a weight function anyway? For an arbitrary function to be a weight function, it must be positive, continous and all its moments $\mu_n :=\int_a^b x^n w(x)dx$ should exist \cite{spe-func}. In the other hand the classification of \textit{classic} orthogonal polynomials \cite{hassani}  yields that a function of the form $e^{\frac{-x^2}{2}}$ \emph{must generate} a sequence of classic orthogonal polynomials.\emo}\fi  \ \ and the interval $(-\infty,\infty)$, we obtain the sequence of Chebyshev-Hermite polynomials as the set of orthogonal polynomials, the orthogonality is proven later in Lemma \ref{le:her-prob-orth}. Hence if we are working in an application where the inner product has the alleged weight function, the natural way to write an arbitrary function, $f \in L^2_w(\mathbb{R})$ as a series of orthogonal functions, is using the Chebyshev-Hermite polynomials. We can do this since the Chebyshev-Hermite polynomials are a Complete Orthogonal System  in $L^2_w(\mathbb{R})$\cite{hilbert}. 

\subsection{Explicit expression} \label{sesu:her-exp-expre}

A remarkable, however vague result is a general \iftex \textit{ explicit expression }\fi \ifblog \href{https://en.wikipedia.org/wiki/Hermite_polynomials#Explicit_expression}{explicit expression} \fi for the Chebyshev-Hermite polynomials. To obtain it we start with the generating function \eqref{eq:herm-prob-gen} and expand the exponential argument,

$$e^{xt-\frac{t^2}{2}}=\sum_{k=0}^{\infty}\frac{\left(xt-t^2/2\right)^k}{k!} = \sum_{k=0}^{\infty}\sum_{j=0}^{k}{{k}\choose{j}}\frac{\left(xt\right)^{k-j} (-1)^{j}\left(t\right)^{2j}}{2^j k!},$$

where we used the binomial theorem to obtain the last expression. Now we redefine indexes, so we let $n=k+j$ and rewrite both sums,

\iftex \footnotetext{Why choose $e^{\frac{-x^2}{2}}$ as a weight function anyway? For an arbitrary function to be a weight function, it must be positive, continous and all its \iftex moments \fi \ifblog\href{https://homepage.tudelft.nl/11r49/documents/wi4006/orthopoly.pdf}{moments}\fi  $\mu_n :=\int_a^b x^n w(x)dx$ should exist \cite{spe-func}. In the other hand the classification of \textit{classic} orthogonal polynomials \cite{hassani}  yields that a function of the form $e^{\frac{-x^2}{2}}$ \emph{must generate} a sequence of classic orthogonal polynomials.\emo}\fi

$$e^{xt-\frac{t^2}{2}}=\sum_{n=0}^{\infty}\sum_{j=0}^{\left\lfloor{\frac{n}{2}}\right\rfloor}{{n-j}\choose{j}}\frac{\left(x\right)^{n-2j} (-1)^{j}\left(t\right)^{n}}{2^j (n-j)!}=\sum_{n=0}^{\infty}\frac{t^n}{n!}\sum_{j=0}^{\left\lfloor{\frac{n}{2}}\right\rfloor}\frac{n!\left(x\right)^{n-2j} (-1)^{j}}{2^j (n-2j)!j!},$$

where $\left\lfloor{.}\right\rfloor$ is the floor function. Now comparing with \eqref{eq:herm-prob-gen} we find that\footnote{ An explicit expression  is wonderful for calculating the polynomials in a computer, however, it is cumbersome (yet \eqref{eq:her-phy-expl} and \eqref{eq:her-prob-expl} will be really important in some applications, Section \ref{se:her-app}), it is hard to see some  properties of the polynomials with the explicit expression. As we will see later other \textit{generating methods}, Section \ref{se:he-misc} (recurrence relation, Rodrigues formula, ODE, integral representation) may highlight the importance of a property more than another. This is maybe why orthogonal polynomials  are so powerful, one is able to apply  different perspectives  to solve a  problem.\emo},

\begin{equation} \label{eq:her-prob-expl}
He_n(x) = n! \sum_{j=0}^{\left\lfloor{\frac{n}{2}}\right\rfloor}\frac{(-1)^{j} x^{n-2j} }{2^j (n-2j)!j!}.
\end{equation}

We will later need in this document the explicit expression for the \emph{Hermite polynomials}, $H_n(x)$, so using equation \eqref{eq:hermite-conv} we can write their \iftex explicit expression, \fi \ifblog \href{https://en.wikipedia.org/wiki/Hermite_polynomials#Explicit_expression}{explicit expression}, \fi 

\begin{equation} \label{eq:her-phy-expl}
H_n(x) = n! \sum_{j=0}^{\left\lfloor{\frac{n}{2}}\right\rfloor}\frac{(-1)^{j} (2x)^{n-2j} }{(n-2j)!j!}.
\end{equation}
\ifkeith
\marginnote{\footnotesize Analysing \eqref{eq:her-prob-expl} and Figure \ref{fig:hermite} is enough to solve the three items.}
\vspace{-15pt}
\fi
\begin{exercise}\label{ex:her-prop}
Check the following special values for the Chebyshev-Hermite \iftex \break\fi polynomials:
\begin{itemize}
\item Leading Coefficient: $He^{(n)}_n /n! = 1$, then the polynomials are \iftex monic\cite{nist-sp-fun}\fi \ifblog\href{https://en.wikipedia.org/wiki/Monic_polynomial}{monic}\fi.
\item Final Coefficient: $He_{2n}(0)=\frac{(-1)^{n}(2n)!}{n!2^n}$ and $He_{2n+1}(0)=0$.
\item Parity : $He_n(-x)=(-1)^{n}He_n(x)$.
\end{itemize}
\end{exercise}

The reader is encouraged to prove some of these properties with the other generating methods for the polynomials, Sections \ref{sesu:he-def-gen-fun}, \ref{sesu:he-def-gram-schmidt} and Section \ref{se:he-misc}. This will make him more comfortable with the different representations of the polynomials.

With the explicit expression \eqref{eq:her-prob-expl} we can make a table for the first 
six Chebyshev-Hermite polynomials and a plot of them. We present this data in the following figure, 

\begin{figure}[h!]
\image{width = 420}{https://keithpatarroyo.files.wordpress.com/2018/09/hermite_poly_table1.png}{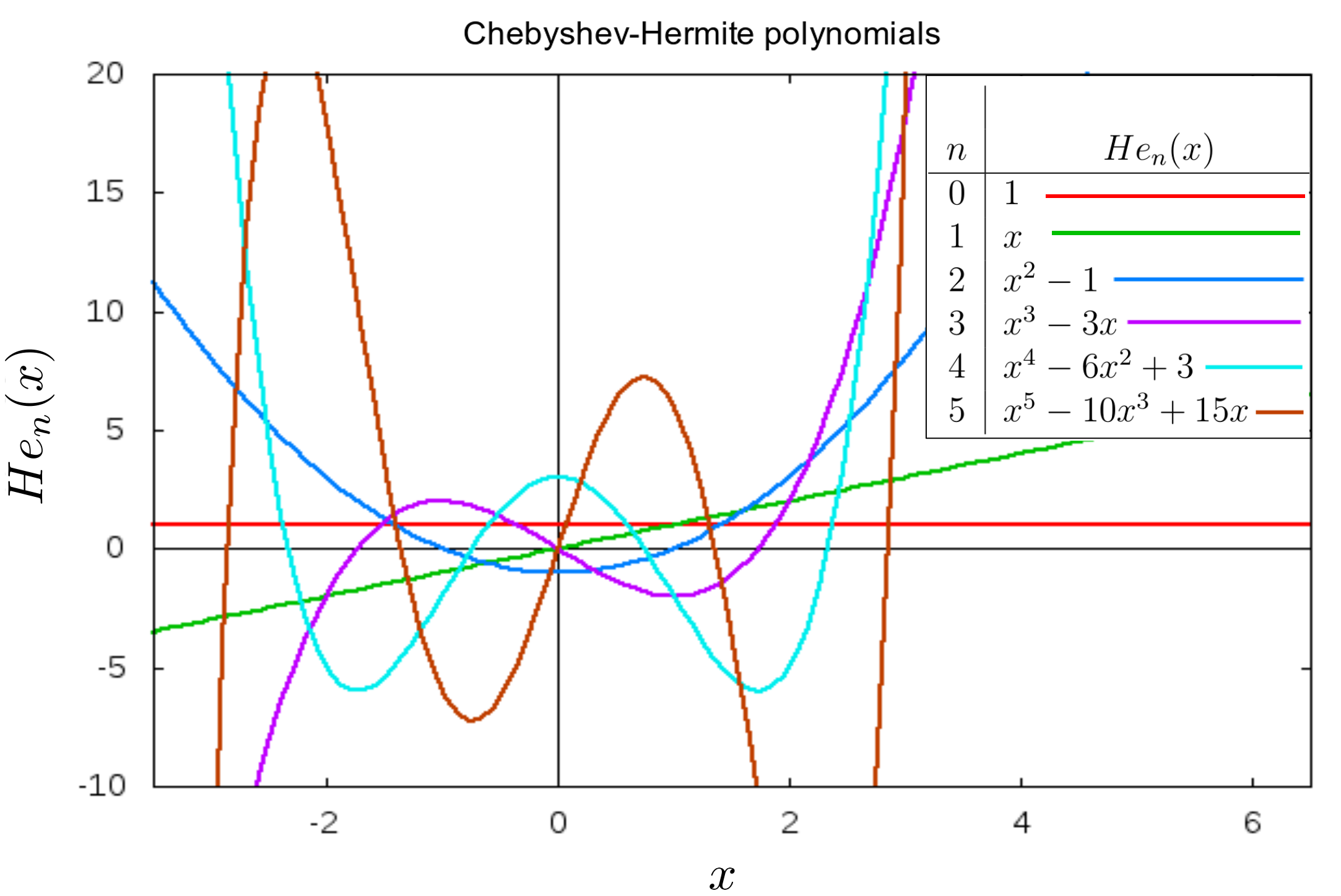}
\captionof{figure}{The first six Chebyshev-Hermite polynomials $He_n(x)$. \emo}\label{fig:hermite}
\end{figure}
 
We can rapidly check the properties of Exercise \ref{ex:her-prop} with the table in Figure \ref{fig:hermite}. Also, from the figure we see other beautiful properties of the polynomials, the polynomial of order $n$ has $n$ real roots and between any two roots of a polynomial there is a root of a higher order polynomial, these facts will be of use later.

\section{Some fundamental results}\label{se:he-misc}

We proceed next to present some features of the Chebyshev-Hermite polynomials that are commonly shown in the treatment of orthogonal polynomials.

\subsection{Recurrence relation}

All classical orthogonal polynomials follow a  \iftex recurrence relation as is well known in the technical literature \cite{nist-sp-fun}, in some treatments of orthogonal polynomials the generating function is derived from the recurrence relations \cite{hassani}. \fi  \ifblog \href{http://mathworld.wolfram.com/RecurrenceEquation.html}{recurrence relation}, in my previous post \href{https://keithpatarroyo.wordpress.com/2016/07/12/relation-between-legendre-polynomial-generating-function-and-legendre-differential-equation/}{Relation between Legendre polynomial Gen. function and Legendre diff. eq.} we derived the generating function from the recurrence relation.\fi We do the opposite process and derive two recurrence relations from the generating function \eqref{eq:herm-prob-gen}.

\begin{lemma}\label{le:her-recurrence} 
The sequence of the Chebyshev-Hermite polynomials, $\{He_n(x)\}_{n=0}^{\infty}$,  satisfy the two following recurrence relations,

\iftex \begingroup\makeatletter\def\f@size{11.5}\check@mathfonts
\fi
\begin{equation}\label{eq:he-re-1}
He'_{n}(x) = nHe_{n-1}(x), \qquad \textnormal{ for } \: n\geq 1,
\end{equation}
\begin{equation}\label{eq:he-re-2}
He_{n+1}(x)-xHe_{n}(x)+nHe_{n-1}(x) = 0, \qquad \textnormal{ for } \: n\geq 1.
\end{equation}\iftex\endgroup\fi

\end{lemma}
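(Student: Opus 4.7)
The plan is to derive both recurrences by differentiating the generating function $G(x,t) = e^{xt - t^2/2}$ from Definition \ref{def:herm-prob-gen}, once with respect to $x$ and once with respect to $t$, and then matching coefficients of $t^n/n!$. This is the cleanest route because $G$ is smooth in both variables so the differentiation and summation can be interchanged freely, and both sides of each recurrence will simply drop out from the uniqueness of power series expansions.

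For \eqref{eq:he-re-1}, I would compute $\partial G/\partial x$ in two ways. On the exponential side,
$$\frac{\partial G}{\partial x} = t\, e^{xt - t^2/2} = t\, G(x,t),$$
and on the series side, termwise differentiation yields $\sum_{n\ge 0} He'_n(x)\, t^n/n!$. Then I reindex $t\, G(x,t) = \sum_{n\ge 1} He_{n-1}(x)\, t^n/(n-1)! = \sum_{n\ge 1} n\, He_{n-1}(x)\, t^n/n!$, so matching the coefficient of $t^n/n!$ for $n\ge 1$ gives $He'_n(x) = n\, He_{n-1}(x)$.

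For \eqref{eq:he-re-2}, I would do the analogous computation with $\partial G/\partial t$. On the exponential side,
$$\frac{\partial G}{\partial t} = (x - t)\, e^{xt - t^2/2} = (x - t)\, G(x,t),$$
and on the series side, $\sum_{n\ge 1} He_n(x)\, t^{n-1}/(n-1)! = \sum_{n\ge 0} He_{n+1}(x)\, t^n/n!$. Expanding $(x - t) G(x,t) = x \sum_{n\ge 0} He_n(x)\, t^n/n! - \sum_{n\ge 1} n\, He_{n-1}(x)\, t^n/n!$ and comparing coefficients of $t^n/n!$ for $n\ge 1$ gives $He_{n+1}(x) = x\, He_n(x) - n\, He_{n-1}(x)$, which rearranges to the stated identity.

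There is really no main obstacle here; the only bookkeeping care needed is the index shifts when multiplying the generating function by $t$ and when reading off $\partial G/\partial t$, so that the ranges $n\ge 1$ in each recurrence are correctly respected. If I wanted an alternative, I could instead prove \eqref{eq:he-re-1} directly from the explicit expression \eqref{eq:her-prob-expl} by differentiating termwise and shifting the summation index, and then use \eqref{eq:he-re-1} together with the $t$-derivative argument to obtain \eqref{eq:he-re-2}; but the generating function approach handles both at once and stays closest to the definition already introduced.
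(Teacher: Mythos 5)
Your proposal is correct and follows essentially the same route as the paper's own proof: differentiating the generating function $G(x,t)=e^{xt-t^2/2}$ with respect to $x$ and $t$ to obtain $\partial G/\partial x = tG$ and $\partial G/\partial t = (x-t)G$, then matching coefficients of $t^n/n!$ after the same index shifts. No gaps to report.
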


\begin{proof}
We start with the generating function \eqref{eq:herm-prob-gen}, $G(x,t) = e^{xt-\frac{t^2}{2}}$ and we note that it satisfies the following two differential equations,
\begin{eqnarray*}
\frac{\partial G}{\partial x}(x,t) &=& te^{xt-\frac{t^2}{2}} = t G(x,t),  \\ 
\frac{\partial G}{\partial t}(x,t) &=& xe^{xt-\frac{t^2}{2}}-te^{xt-\frac{t^2}{2}} = (x-t) G(x,t).
\end{eqnarray*}

Now if we plug the series expansion for the generating function \eqref{eq:herm-prob-gen} in the previous differential equations, we can obtain equalities relating the coefficients of the series. First for the derivative with respect to $x$ we have,  

$$\frac{\partial G}{\partial x}(x,t) =\sum_{n=1}^\infty He'_n(x)\frac{t^n}{n!}= \sum_{m=1}^\infty m He_{m-1}(x)\frac{t^{m}}{m!} = \sum_{n=0}^\infty He_n(x)\frac{t^{n+1}}{n!} = t G(x,t),$$

where we made $m=n+1$ in the third equality, now equating for the coefficients of the series we find \eqref{eq:he-re-1}. Finally the derivative with respect to $t$ yields,  

\iftex
\begingroup\makeatletter\def\f@size{11.5}\check@mathfonts
\fi
\begin{eqnarray*}
\frac{\partial G}{\partial t}(x,t) =\sum_{n=1}^\infty nHe_n(x)\frac{t^{n-1}}{n!}=\sum_{k=0}^\infty He_{k+1}(x)\frac{t^k}{k!} &=& \sum_{n=0}^\infty xHe_n(x)\frac{t^{n}}{n!} -\sum_{m=0}^\infty m He_{m-1}(x)\frac{t^{m}}{m!},\\
&=& \sum_{n=0}^\infty xHe_n(x)\frac{t^{n}}{n!} - \frac{\partial G}{\partial x}(x,t),\\
\\
&=& (x-t) G(x,t),
\end{eqnarray*}
\iftex
\endgroup
\fi

where we took the series expansion of $\frac{\partial G}{\partial x}(x,t)$ from the previous manipulation. Now equating for the coefficients of the series we find \eqref{eq:he-re-2}. This proves the lemma. \end{proof}

Now we present an example showing the power of this recurrence relation. We will show later another solution for this example in Section \ref{sesu:her-con-pro}.

\begin{example}\label{exa:her-convolution}
We compute the \iftex convolution \fi \ifblog\href{https://en.wikipedia.org/wiki/Convolution}{convolution}\fi of a standard Gaussian  distribution $\phi(x)$ with the nth Chebyshev-Hermite polynomial $He_n(x)$, also called the rescaled \iftex Weierstrass-Gauss transform \fi \ifblog\href{https://en.wikipedia.org/wiki/Weierstrass_transform}{Weierstrass-Gauss transform}\fi \cite{zayed},\cite{hirschman} of the Chebyshev-Hermite polynomial $\mathcal{W}\left[2^{\frac{n}{2}}He_n\left(\frac{x}{\sqrt{2}}\right)\right]$, 

$$
\mathcal{W}\left[2^{\frac{n}{2}}He_n\left(\frac{x}{\sqrt{2}}\right)\right]=( \phi* He_n )(x) = \int_{-\infty}^{\infty} (2\pi)^{-1/2}e^{-\frac{(y-x)^2}{2}}He_{n}(y)dy.
$$

We claim that $( \phi* He_n)(x) = x^n$ ,  we see that the convolution yields the moments ${\bf E}[He_n(Y)]$ of  a random variable $Y\stackrel{d}{=}\mathcal{N}(x,1)$ with normal distribution. Then we clearly notice from the table in Figure \ref{fig:hermite} that  for $n=0,1$,

$$( \phi* He_0)(x) = {\bf E}[Y^{0}] = 1, \qquad ( \phi* He_1)(x) = {\bf E}[Y^{1}] = x.$$

Now we employ induction on $n$, suppose then for $n-1$ we have, 
\begin{equation}\label{eq:exp-1-ind-hyp}
( \phi* He_{n-1} )(x) = \int_{-\infty}^{\infty} (2\pi)^{-1/2}e^{-\frac{(y-x)^2}{2}}He_{n-1}(y)dy=x^{n-1}.
\end{equation}

Consider now $( \phi* He_n)(x)$ and lets use both \eqref{eq:he-re-1},\eqref{eq:he-re-2} with $n\rightarrow n-1$,

\begin{equation} \label{eq:ex-1-1}
\int_{-\infty}^{\infty} (2\pi)^{-1/2}e^{-\frac{(y-x)^2}{2}}He_{n}(y)dy = 
\int_{-\infty}^{\infty} (2\pi)^{-1/2}e^{-\frac{(y-x)^2}{2}}(yHe_{n-1}(y)-He'_{n-1}(y))dy. 	
\end{equation}

We can then compute the integral with the term $He'_{n-1}$ by integration by parts, 

$$\int_{-\infty}^{\infty} (2\pi)^{-1/2} e^{-\frac{(y-x)^2}{2}}He'_{n-1}(y)dy =  \int_{-\infty}^{\infty} (2\pi)^{-1/2} (y-x)e^{-\frac{(y-x)^2}{2}}He_{n-1}(y)dy,
$$

where we took into account that $e^{-\frac{(y-x)^2}{2}}$ multiplied by any polynomial vanish for infinite $x$. Now replacing the last expression in \eqref{eq:ex-1-1} we obtain,

$$\int_{-\infty}^{\infty} (2\pi)^{-1/2}e^{-\frac{(y-x)^2}{2}}He_{n}(y)dy =x 
\int_{-\infty}^{\infty} (2\pi)^{-1/2}e^{-\frac{(y-x)^2}{2}}(He_{n-1}(y))dy. $$
Finally using the induction hypothesis \eqref{eq:exp-1-ind-hyp} in the last expression we obtain,
\begin{equation} \label{eq:herm-convolution}
\mathcal{W}\left[2^{\frac{n}{2}}He_n\left(\frac{x}{\sqrt{2}}\right)\right]={\bf E}[He_n(Y)]=( \phi* He_n )(x) =  x^{n},
\end{equation}

which is the desired result.
\end{example}

\begin{exercise}\label{ex:convolution-pr}
Compute the integral \eqref{eq:herm-convolution} starting from the generating function \eqref{eq:herm-prob-gen}, multiply both sides by $(2\pi)^{-1/2}e^{\frac{(x-\hat{x})^2}{2}}$ integrate in $\mathbb{R}$ and compare the terms in the series.
\end{exercise}

\subsection{Rodrigues formula}
\label{sesu:Rodrigues}
Now we derive the so-called \iftex Rodrigues formula \fi \ifblog\href{https://en.wikipedia.org/wiki/Rodrigues\%27_formula}{Rodrigues formula}\fi for the Chebyshev-Hermite polynomials, this formula is extremely useful to solve many problems quickly. We start with our generating function \eqref{eq:herm-prob-gen}, we write it in a convenient way and recognize the Taylor expansion coefficients as,

$$\left[ \frac{\partial^n }{\partial t^n}\left( e^{xt-\frac{t^2}{2}} \right) \right]_{t=0} = e^{\frac{x^2}{2}}\left[ \frac{\partial^n }{\partial t^n}\left( e^{\frac{-(t-x)^2}{2}} \right) \right]_{t=0} = He_n(x).$$

Using the identity $\frac{\partial f(t-x)}{\partial t} = -\frac{\partial f(t-x)}{\partial x}$ in the above expression yield,

$$e^{\frac{x^2}{2}}\left[ \frac{\partial^n }{\partial t^n}\left( e^{\frac{-(t-x)^2}{2}} \right) \right]_{t=0} = e^{\frac{x^2}{2}}(-1)^n\left[ \frac{\partial^n }{\partial x^n}\left( e^{\frac{-(t-x)^2}{2}} \right) \right]_{t=0} = (-1)^n e^{\frac{x^2}{2}} \frac{d^n }{d x^n}\left( e^{\frac{-x^2}{2}} \right).$$

Then the Rodrigues formula for the Chebyshev-Hermite polynomial is,

\begin{equation}\label{eq:her-prob-rodri}
He_n(x) = (-1)^n e^{\frac{x^2}{2}} \frac{d^n }{d x^n}\left( e^{\frac{-x^2}{2}} \right).
\end{equation}
\ifkeith
\reversemarginpar
\marginnote{\footnotesize The left equation is a very special operator representation of the C-Hermite functions, it is the "raising operator" of quantum mechanics $\hat{a}^\dagger = (\frac{\hat{x}}{2}-i\hat{p})$, which in terms of the eigenstates is 
$|n\rangle =(\hat{a}^\dagger)^n|0\rangle$, where
$\hat{p}=-i\frac{d}{dx}$.\\ This also shows that all the operator formalism used in Q.M. can also be used in this context. See [vlado].}
\vspace{-10pt}
\normalmarginpar
\marginnote{\footnotesize The left equation is solved showing that $\hat{O}_1=\hat{O}_2$, where
\tiny$\hat{O}_1\equiv -e^{\frac{x^2}{2}} \frac{d}{dx}e^{\frac{-x^2}{2}} $ \footnotesize and \tiny$\hat{O}_2   \equiv e^{\frac{x^2}{4}} (\frac{x}{2}-\frac{d}{dx})e^{\frac{-x^2}{4}} $. \footnotesize The right equation is solved using induction and the recurrence relations \eqref{eq:he-re-1} and \eqref{eq:he-re-2}. 
}
\vspace{-10pt}
\fi
\begin{exercise}\label{ex:her-prop-rodrigues}
Show that the Chebyshev-Hermite polynomials can also be defined in the following two ways,
\begin{equation}\label{eq:ex-3}
He_n(x) =  e^{\frac{x^2}{4}} \left(\frac{x}{2}-\frac{d}{d x} \right)^n \cdot \left[ e^{\frac{-x^2}{4}} \right], \qquad He_n(x) = \left(x-\frac{d }{dx} \right)^n  \cdot 1 .
\end{equation}
\end{exercise}

Now we  prove the \iftex orthogonality \fi\ifblog \href{https://en.wikipedia.org/wiki/Orthogonality#Orthogonal_functions}{orthogonality}\fi of Chebyshev-Hermite polynomials, we decide to present this proof since it shows the power of the Rodrigues formula. Again the reader is encouraged to prove this with the other generating methods for the polynomials.

\begin{lemma} \label{le:her-prob-orth}
The Chebyshev-Hermite polynomials are orthogonal with respect to the weight function $w(x) = e^{\frac{-x^2}{2}}$ in the interval $(-\infty, \infty)$. In other words,
\begin{equation} \label{eq:her-prob-orth} 
\int_{-\infty}^{\infty} e^{\frac{-x^2}{2}}He_n(x)He_{n'}(x)dx=\sqrt{2\pi} n! \delta_{nn'}.
\end{equation}
\end{lemma}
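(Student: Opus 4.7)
The plan is to exploit the Rodrigues formula \eqref{eq:her-prob-rodri} together with integration by parts, since this lets us trade the polynomial factor for derivatives of the weight. Without loss of generality assume $n \geq n'$. Using \eqref{eq:her-prob-rodri} on the higher-index factor, rewrite the integrand as
$$e^{-x^2/2}\, He_n(x)\, He_{n'}(x) \;=\; (-1)^n\, He_{n'}(x)\, \frac{d^n}{dx^n}\!\left(e^{-x^2/2}\right).$$
This is the key move: the exponential weight has been absorbed into an $n$-fold derivative, so the problem reduces to a purely differential-operator computation.

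Next I would integrate by parts $n$ times, moving all $n$ derivatives from $e^{-x^2/2}$ onto $He_{n'}(x)$. At each step the boundary term at $\pm\infty$ vanishes, because every intermediate factor is of the form (polynomial in $x$) times $e^{-x^2/2}$, and Gaussian decay beats polynomial growth. The $n$ integrations contribute a sign $(-1)^n$ which cancels the $(-1)^n$ from Rodrigues, leaving
$$\int_{-\infty}^{\infty} e^{-x^2/2}\, He_n(x)\, He_{n'}(x)\, dx \;=\; \int_{-\infty}^{\infty} He_{n'}^{(n)}(x)\, e^{-x^2/2}\, dx.$$

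Now I would split into two cases. If $n > n'$, then $He_{n'}^{(n)} \equiv 0$ since $\deg He_{n'} = n' < n$, giving orthogonality. If $n = n'$, then by the monic leading-coefficient property of Exercise \ref{ex:her-prop} (equivalently, by iterating \eqref{eq:he-re-1}) we have $He_n^{(n)}(x) = n!$, so the right-hand side becomes $n!\int_{-\infty}^{\infty} e^{-x^2/2}\,dx = \sqrt{2\pi}\,n!$, matching \eqref{eq:her-prob-orth}.

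The only real obstacle is bookkeeping: tracking the sign from Rodrigues together with the sign from each integration by parts, and verifying that the boundary terms genuinely vanish at every stage. Neither is deep, but a clean induction on the number of integrations by parts (noting that $\frac{d^k}{dx^k}(e^{-x^2/2})$ equals a polynomial of degree $k$ times $e^{-x^2/2}$) makes the vanishing of the boundary contributions transparent and keeps the signs under control.
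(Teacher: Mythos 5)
Your proposal is correct and follows essentially the same route as the paper's own proof: apply the Rodrigues formula \eqref{eq:her-prob-rodri} to $He_n$, integrate by parts repeatedly with vanishing Gaussian boundary terms, kill the case $n>n'$ because $He_{n'}^{(n)}\equiv 0$, and use the monic leading coefficient plus the Gaussian integral for $n=n'$. No meaningful differences to report.
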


\begin{proof}
We start by expressing $He_n(x)$ with the Rodrigues formula, \eqref{eq:her-prob-rodri}, then the integral yields,
$$\int_{-\infty}^{\infty} e^{\frac{-x^2}{2}}He_n(x)He_{n'}(x)dx=\int_{-\infty}^{\infty} (-1)^n \frac{d^n }{d x^n}\left( e^{\frac{-x^2}{2}} \right) He_{n'}(x)dx.$$

For the case $n \neq n'$ assume without loss of generality that $n>n'$, now we integrate by parts $n$ times the above expression, we take into account  the fact that $e^{\frac{-x^2}{2}}$ and all its derivatives vanish for infinite $x$,

\begin{eqnarray*}
\int_{-\infty}^{\infty} e^{\frac{-x^2}{2}}He_n(x)He_{n'}(x)dx &=& (-1)^{n-1} \int_{-\infty}^{\infty}  \frac{d^{n-1} }{d x^{n-1}}\left( e^{\frac{-x^2}{2}} \right) He_{n'}^{(1)}(x)dx,\\
&=& \qquad \qquad \qquad \vdots\\
&=& (-1)^{n-n'} \int_{-\infty}^{\infty}  \frac{d^{n-n'} }{d x^{n-n'}}\left( e^{\frac{-x^2}{2}} \right) He_{n'}^{(n')}(x)dx,\\
&=& 0,
\end{eqnarray*}

where its clear that the $n'+1$ derivative of a $n'$ degree polynomial is zero.

For the case $n = n'$, we perform the same procedure as above, we  integrate by parts $n$ times ,

\begin{eqnarray*}
\int_{-\infty}^{\infty} e^{\frac{-x^2}{2}}He_n(x)He_{n}(x)dx 
&=& (-1)^{0} \int_{-\infty}^{\infty}  \frac{d^{0} }{d x^{0}}\left( e^{\frac{-x^2}{2}} \right) He_{n}^{(n)}(x)dx,\\
&=& n! \int_{-\infty}^{\infty}   e^{\frac{-x^2}{2}}  dx,\\
&=&  n! \sqrt{2\pi},
\end{eqnarray*}

where we made use of the leading coefficient of the Chebyshev-Hermite polynomials, Exercise \ref{ex:her-prop}, and the Gaussian integral. This proves the lemma. \end{proof}

Now a little exercise where one can apply the Rodrigues formula and the method applied above to prove Lemma \ref{le:her-prob-orth} of successive integration by parts.

\begin{exercise}

Obtain the \iftex \underline{inverse explicit expression} \fi \ifblog\href{https://en.wikipedia.org/wiki/Hermite_polynomials#Inverse_explicit_expression}{inverse explicit expression}\fi from \eqref{eq:her-prob-expl} for the Chebyshev-Hermite polynomials, 

\begin{equation} \label{eq:her-prob-inverse}
x^n = n! \sum_{j=0}^{\left\lfloor{\frac{n}{2}}\right\rfloor} \frac{He_{n-2j}(x)}{2^{j}(n-2j)!j!}.
\end{equation}

Suggestion: Divide in two cases $n=2n$ and $n=2n+1$, then expand the power as a linear combination of Chebyshev-Hermite polynomials. Use orthogonality to find the coefficients,  do the necessary integrals with the process described above. Join the even and odd result to find \eqref{eq:her-prob-inverse}.
\end{exercise}

We will later need in this document the inverse explicit expression for the  \textit{Hermite polynomials}, $H_n(x)$, so using equation \eqref{eq:hermite-conv} and \eqref{eq:her-prob-inverse} we can write their inverse explicit expression from \eqref{eq:her-phy-expl} as,
\begin{equation} \label{eq:her-phys-inverse}
(2x)^n = n! \sum_{j=0}^{\left\lfloor{\frac{n}{2}}\right\rfloor} \frac{H_{n-2j}(x)}{(n-2j)!j!}.
\end{equation}
Next, we apply some of the results we obtained in the previous sections to a very important application of the polynomials, numerical integration.

\subsection{Gauss-Hermite Quadrature}
\label{sesu:quadrature-1d}

Imagine one would like to compute a very difficult integral in the interval $(-\infty,\infty)$, since the interval is unbounded, the direct application of classical techniques (\iftex trapezoid \fi \ifblog\href{https://en.wikipedia.org/wiki/Trapezoidal_rule}{trapezoid}\fi or unmodified \iftex Monte Carlo\fi \ifblog\href{https://en.wikipedia.org/wiki/Monte_Carlo_integration}{Monte Carlo}\fi) for the numerical computation of the integral are unsuccessful. Remarkably the Chebyshev-Hermite polynomials can be used to tackle this problem \textit{directly} using the so-called \iftex numerical quadrature or Gauss quadrature.\fi \ifblog\href{https://en.wikipedia.org/wiki/Gaussian_quadrature}{numerical quadrature or Gauss quadrature.}\fi

The \textit{quadrature} rule  for a general family of orthogonal polynomials $\{p_n(x)\}_{n=0}^{\infty}$ with respect to a weight $w(x)$ on an interval $(a,b)$ is the result of following theorem, 

\begin{theorem}\label{th:quadrature-1d}
Let $f(x)$ be a $2N-1$ degree polynomial and $\{x_i\}_{i=0}^{N}$ be the zeros of $p_N(x)$, then the following quadrature formula holds,
\begin{equation} \label{eq:gauss-quadrature-1d}
\int_a^b w(x) f(x)dx=\sum_{i=1}^{N}w_{i,N} f(x_i),\qquad w_{i,N}=\frac{a_N}{a_{N-1}}\cdot \frac{\int_a^b w(x) [p_{N-1}(x)]^2 dx}{p_N'(x_i)p_{N-1}(x_i)},
\end{equation}
where $a_N$ is the $Nth$ coefficient of the orthogonal polynomial $p_N(x)$.
\end{theorem}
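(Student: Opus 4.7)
The plan is to proceed in three stages: reduce to polynomials of degree at most $N-1$ via Euclidean division, express such polynomials by Lagrange interpolation at the nodes $x_i$, and finally identify the resulting integrals with the closed form for $w_{i,N}$.

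First, since $\deg p_N = N$ and $\deg f \leq 2N-1$, I would perform Euclidean division of $f$ by $p_N$: there exist polynomials $q,r$ with $\deg q, \deg r \leq N-1$ such that $f(x) = p_N(x)\,q(x) + r(x)$. Integrating against $w$, the cross term $\int_a^b w(x)\,p_N(x)\,q(x)\,dx$ vanishes because $q$ lies in the span of $\{p_0,\dots,p_{N-1}\}$ and each of those is orthogonal to $p_N$. Hence $\int_a^b w\,f\,dx = \int_a^b w\,r\,dx$. Moreover $f(x_i) = r(x_i)$ since $p_N(x_i) = 0$, so it suffices to establish \eqref{eq:gauss-quadrature-1d} for polynomials $r$ of degree at most $N - 1$.

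Second, I would invoke the standard fact (a consequence of the Gram--Schmidt construction of Theorem \ref{th:gram-schmidt}, alluded to at the end of Section \ref{sesu:her-exp-expre}) that $p_N$ has $N$ distinct simple zeros in $(a,b)$, so that Lagrange interpolation at the nodes $\{x_i\}_{i=1}^{N}$ is well-defined. Writing $r(x) = \sum_{i=1}^{N} r(x_i)\,\ell_i(x)$ with $\ell_i(x) = \dfrac{p_N(x)}{(x-x_i)\,p_N'(x_i)}$ and integrating gives
$$\int_a^b w(x)\,r(x)\,dx \;=\; \sum_{i=1}^{N} r(x_i)\,\widetilde{w}_i, \qquad \widetilde{w}_i \;=\; \frac{1}{p_N'(x_i)}\int_a^b w(x)\,\frac{p_N(x)}{x-x_i}\,dx.$$
This already yields exactness; only the explicit identification $\widetilde{w}_i = w_{i,N}$ remains.

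Third, which I expect to be the main obstacle since it requires a tool not yet developed in the text, I would invoke the Christoffel--Darboux identity
$$\sum_{k=0}^{N-1} \frac{p_k(x)\,p_k(y)}{h_k} \;=\; \frac{a_{N-1}}{a_N\,h_{N-1}} \cdot \frac{p_N(x)\,p_{N-1}(y) - p_{N-1}(x)\,p_N(y)}{x - y}, \qquad h_k := \int_a^b w\,p_k^{\,2}\,dx,$$
and specialize to $y = x_i$. Since $p_N(x_i) = 0$ this collapses to $\dfrac{p_N(x)}{x-x_i} = \dfrac{a_N\,h_{N-1}}{a_{N-1}\,p_{N-1}(x_i)} \sum_{k=0}^{N-1} \dfrac{p_k(x)\,p_k(x_i)}{h_k}$. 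Integrating against $w$, orthogonality of each $p_k$ (with $k \geq 1$) to the constant $p_0$ kills every term except $k=0$, and the $k=0$ contribution simplifies to $1$. Thus $\int_a^b w(x)\,\dfrac{p_N(x)}{x-x_i}\,dx = \dfrac{a_N\,h_{N-1}}{a_{N-1}\,p_{N-1}(x_i)}$, and substituting back gives $\widetilde{w}_i = \dfrac{a_N}{a_{N-1}} \cdot \dfrac{h_{N-1}}{p_N'(x_i)\,p_{N-1}(x_i)}$, which is precisely the claimed formula. The tight spot is genuinely this third step: the first two are essentially bookkeeping, but the closed form of $w_{i,N}$ requires an identity linking consecutive orthogonal polynomials, and if Christoffel--Darboux is to be avoided one has to reprove an equivalent statement from the three-term recurrence.
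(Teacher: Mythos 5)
Your argument is correct, but note that the paper does not actually prove this theorem --- it only refers to \cite{hildebrand} --- so there is no internal proof to compare against; what you have written is essentially the classical argument that such references give. Your first two steps are sound: Euclidean division $f = p_N q + r$ kills the quotient term by orthogonality of $p_N$ to all polynomials of degree $\leq N-1$, and Lagrange interpolation of $r$ at the $N$ simple zeros (you correctly read the index set as $i=1,\dots,N$; the $\{x_i\}_{i=0}^N$ in the statement is a typo) reduces everything to computing $\widetilde{w}_i = \frac{1}{p_N'(x_i)}\int_a^b w(x)\frac{p_N(x)}{x-x_i}\,dx$. Your third step is also correct as written: the Christoffel--Darboux constant $\frac{a_{N-1}}{a_N h_{N-1}}$ is the right one, setting $y=x_i$ legitimately collapses the right side since $p_N(x_i)=0$, and integrating the kernel against $w$ leaves only the $k=0$ term, which equals $1$ regardless of the normalization of $p_0$ because $h_0 = p_0^2\int_a^b w$. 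You are right to flag Christoffel--Darboux as the one ingredient not available in the text; it does follow from the three-term recurrence by a telescoping sum (the paper only derives that recurrence for the Hermite case, Lemma \ref{le:her-recurrence}, not for general weights). If you wanted to avoid it entirely, you could instead evaluate $\int_a^b w(x)\frac{p_N(x)}{x-x_i}\,dx$ by writing $\frac{p_N(x)}{x-x_i} = \frac{a_N}{a_{N-1}}p_{N-1}(x) + (\text{degree} \leq N-2)$ --- the leading coefficient comparison is immediate and the lower-order part integrates against $w$ to something that must be matched to $0$ via orthogonality to $p_{N-1}$, which again needs a small argument --- so Christoffel--Darboux remains the cleanest route and your proof is complete modulo quoting it.
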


\begin{proof}
See \cite{hildebrand} for \iftex details. \fi \ifblog\href{http://www.damtp.cam.ac.uk/user/na/PartIB/Lect04.pdf}{details.}\fi 
\end{proof}

It can also be proven, see \cite{hildebrand},  that all the zeros of $p_n(x)$ for $n\geq 1$ are real, distinct and lie in 
$(a,b)$, hence \eqref{eq:gauss-quadrature-1d} is always computable. 

If $f(x)$ is not a $2N-1$ polynomial, \eqref{eq:gauss-quadrature-1d} is not exact, roughly the discrepancy is a result of how well $f(x)$ is approximated by a polynomial. However if $f(x)$ is continuous the approximation gets better as we increase the number of quadrature points \cite{spe-func}.

For the particular case of the Chebyshev-Hermite polynomials, using equations \eqref{eq:her-prob-orth}, \eqref{eq:he-re-1} and Exercise \ref{ex:her-prop} we obtain,
\begin{equation} \label{eq:hermite-quadrature-1d}
\int_{-\infty}^{\infty} e^{\frac{-x^2}{2}}  f(x)dx=\sum_{i=1}^{N}w_{i,N} f(x_i),\qquad w_{i,N}=\frac{\sqrt{2 \pi}N!}{[N He_{N-1}(x_i)]^2},
\end{equation}
where  $\{x_i\}_{i=0}^{N}$ are the zeros of $He_N(x)$.

We can use the Chebyshev-Hermite polynomials to solve the original problem of a difficult integral of the form  $I=\int_{-\infty}^{\infty}  f(x)dx$ for arbitrary complicated $f(x)$, one way to do this is write $f(x)$ as $\hat{f}(x) e^{\frac{-x^2}{2}} $(some approaches are described in Section \ref{sesusu:Fourier-Hermite}), then the integral can be approximated as, 

$$ I=\int_{-\infty}^{\infty} e^{\frac{-x^2}{2}}  \hat{f}(x)dx\approx \sum_{i=1}^{N}w_{i,N} \hat{f}(x_i).$$

Even though this method might require two approximations, one can \textit{quickly} improve the estimate by a high order quadrature rule since the procedure is just a matter of evaluate $\hat{f}(x)$ in some points, multiply by $w_{i,N}$ (both the points and weights can be stored in memory) and sum the outcomes.

\subsection{Differential Equation(ODE)}

The Chebyshev-Hermite polynomials also arise in many fields since they satisfy the eigenvalue problem,
\begin{equation}\label{her-prob-diff-eq}
u''-xu' = -n u,
\end{equation}

which for positive integer $n$ is widely regarded as the \iftex \textit{Hermite equation}. \fi \ifblog\href{https://en.wikipedia.org/wiki/Hermite_polynomials#Hermite's_differential_equation}{Hermite equation.}\fi  

Its possible to solve this equation by the power series method and show that one family of solutions is indeed the Chebyshev-Hermite polynomials, \cite{ency}. However here we only show that the Chebyshev-Hermite polynomials satisfy 
\eqref{her-prob-diff-eq}, this is summarized in the following lemma,

\begin{lemma}\label{le:hermite-ode}
The sequence of the Chebyshev-Hermite polynomials, $\{He_n(x)\}_{n=0}^{\infty}$,  satisfy the Hermite differential equation \eqref{her-prob-diff-eq}.
\end{lemma}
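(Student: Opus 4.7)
The plan is to derive the Hermite ODE purely as an algebraic consequence of the two recurrence relations \eqref{eq:he-re-1} and \eqref{eq:he-re-2} already established in Lemma \ref{le:her-recurrence}. The idea is that \eqref{eq:he-re-1} lets me trade any occurrence of $nHe_{n-1}$ for the derivative $He_n'$, which reduces the three-term relation \eqref{eq:he-re-2} to a first-order statement involving only $He_n$, $He_n'$ and $He_{n+1}$; one more differentiation then injects the second derivative, and a second application of \eqref{eq:he-re-1} removes $He_{n+1}$, producing an identity in $He_n$, $He_n'$, $He_n''$ alone.

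Concretely, I would proceed as follows. First, substitute $nHe_{n-1}(x) = He_n'(x)$ from \eqref{eq:he-re-1} into \eqref{eq:he-re-2} to obtain the compact relation
\begin{equation*}
He_{n+1}(x) \;=\; xHe_n(x) - He_n'(x).
\end{equation*}
Second, differentiate both sides with respect to $x$, giving
\begin{equation*}
He_{n+1}'(x) \;=\; He_n(x) + xHe_n'(x) - He_n''(x).
\end{equation*}
Third, apply \eqref{eq:he-re-1} at index $n+1$, namely $He_{n+1}'(x) = (n+1)He_n(x)$, and cancel one copy of $He_n(x)$ from each side. The remaining identity rearranges to
\begin{equation*}
He_n''(x) - xHe_n'(x) + nHe_n(x) \;=\; 0,
\end{equation*}
which is exactly \eqref{her-prob-diff-eq} with $u=He_n$. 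For $n=0$ the identity reduces to $0=0$ and holds trivially, so no separate base case is needed.

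As an alternative route that the reader may prefer, one can work directly from the generating function \eqref{eq:herm-prob-gen}. Using $\partial_x G = tG$ one finds $\partial_x^2 G - x\partial_x G = (t^2 - xt)G = -t\,\partial_t G$, and matching coefficients of $t^n/n!$ on the two sides yields the same ODE term by term. I do not anticipate any real obstacle: the proof is a two-line consequence of the recurrences, and the only thing to be slightly careful about is that the index-shifting in the second step requires $n\geq 0$ (which is the range asserted by the lemma), so the derivation is valid throughout.
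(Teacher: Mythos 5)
Your proof is correct and follows essentially the same route as the paper: substitute \eqref{eq:he-re-1} into \eqref{eq:he-re-2} to eliminate $He_{n-1}$, differentiate, and apply \eqref{eq:he-re-1} at index $n+1$ to cancel $He_{n+1}'$. The generating-function alternative you sketch is also valid but is not needed; the main argument matches the paper's step for step.
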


\begin{proof}
We start by replacing the recurrence relation \eqref{eq:he-re-1} in the recurrence relation \eqref{eq:he-re-2}, this yields,
$$He_{n+1}(x)-xHe_{n}(x)+He'_{n}(x) = 0.$$

Now taking the derivative of the previous equation we obtain,
$$He'_{n+1}(x)-He_{n}(x)-xHe'_{n}(x)+He''_{n}(x) = 0.$$

Using \eqref{eq:he-re-1} with the term $He'_{n+1}(x)$ in the previous equation, we obtain,

\begin{equation}
He''_{n}(x)-xHe'_{n}(x) = -nHe_n(x).
\end{equation}

This proves the lemma. \end{proof}

Its convenient  to introduce the orthogonal \textbf{Chebyshev-Hermite functions } $he_n(x)$, defined by,

\begin{equation}\label{eq:chebyshev-hermite-functions}
he_n(x) = e^{\frac{-x^2}{4}}He_n(x).
\end{equation}

In the following exercise the reader is encouraged to check some elementary properties of the Chebyshev-Hermite functions,
\ifkeith
\reversemarginpar
\marginnote{\footnotesize Eq. \eqref{eq:herm-fun-rec} is a very special recurrence relation of the C-Hermite functions, it is the "lowering operator" of quantum mechanics $\hat{a} = (\frac{\hat{x}}{2}+i\hat{p})$, which in terms of the eigenstates is 
\scriptsize $|n-1\rangle = \frac{1}{n}\hat{a}|n\rangle$, \footnotesize where
$\hat{p}=-i\frac{d}{dx}$. \\
This also implies that there should be another recurrence relation with a "rising operator" \scriptsize $|n+1\rangle \sim \hat{a}^\dagger|n\rangle$, \footnotesize such that applying it $n$ times yield the right eq.  of \eqref{eq:ex-3}.}
\vspace{-5pt}
\normalmarginpar
\marginnote{\footnotesize To obtain \eqref{eq:herm-fun-rec} replace the def. of $he_n(x)$ and use recurrence relation \eqref{eq:he-re-1}. For \eqref{eq:weber} replace def. of $he_n(x)$ and use Hermite eq. \eqref{her-prob-diff-eq} with assistance of the recurrence relations.}
\vspace{-5pt}
\fi
\begin{exercise}\label{ex:prob-herm-fun-ode-prop}
Show that the Chebyshev-Hermite functions $\{he_n(x)\}_{n=0}^{\infty}$ are orthogonal in $L^2(\mathbb{R})$(with weight function 1) and they satisfy,

\begin{equation}\label{eq:herm-fun-rec}
\frac{x}{2}he_n(x)+he'_n(x) = n he_{n-1}(x),
\end{equation}

\begin{equation}\label{eq:weber}
he''_{n}(x)+\left(-\frac{x^2}{4}+n+\frac{1}{2}\right)he_n(x) = 0.
\end{equation}
\end{exercise}

Equation \eqref{eq:weber} is known as \iftex \textit{Weber equation} \fi \ifblog \href{https://www.encyclopediaofmath.org/index.php/Weber_equation}{Weber equation}\fi \cite{weber}, it comes up in the study of the \iftex Laplace's equation \fi \ifblog \href{https://en.wikipedia.org/wiki/Laplace\%27s_equation}{Laplace's equation} \fi in parabolic coordinates;	 for arbitrary $n$, the solutions of this equation are known as \iftex \textit{parabolic cylinder functions.} \fi \ifblog \href{https://dlmf.nist.gov/12.2#i}{parabolic cylinder functions.}\fi

Another \textit{remarkable} set of functions are the \iftex \textbf{Hermite functions} \fi \ifblog \href{https://en.wikipedia.org/wiki/Hermite_polynomials#Hermite_functions}{Hermite functions} \fi $h_n(x)$, they share a similar relation to \eqref{eq:hermite-conv} with the Chebyshev-Hermite functions $he_n(x)$ given by,

\begin{equation} \label{eq:hermite-fun-conv}
h_n(x) = 2^{\frac{n}{2}} he_n(\sqrt{2}x) = e^{\frac{-x^2}{2}}H_n(x).
\end{equation}

Just as the Chebyshev-Hermite functions, the Hermite functions  $\{h_n(x)\}_{n=0}^{\infty}$ are orthogonal in $L^2(\mathbb{R})$ and they satisfy the following differential equation(also called Hermite equation),

\begin{equation}\label{eq:her-phy-eq}
h''_{n}(x)-x^2h_n(x) = -\left(2n+1\right)h_n(x),
\end{equation}

as it can be directly checked  or derived from \eqref{eq:weber}.

The Hermite functions $h_n(x)$ play a fundamental  role in Quantum Mechanics (see \cite{vlado},\cite{hassani}) and in \iftex Fourier analysis \fi \ifblog \href{https://en.wikipedia.org/wiki/Fourier_analysis}{Fourier analysis} \fi as we will see on Section \ref{sesu:her-phys-four-trans}.

\subsection{Integral representation} \label{sesu:her-int-rep}

We present yet another way of obtaining the Chebyshev-Hermite polynomials. This time, the method depends on the remarkable result that $e^{\frac{-x^2}{2}}$ is its own \iftex Fourier transform. \fi \ifblog \href{https://en.wikipedia.org/wiki/Fourier_transform}{Fourier transform.} \fi

In other words, computing the inverse Fourier transform of $e^{\frac{-k^2}{2}}$ is $e^{\frac{-x^2}{2}}$, this is (using the Fourier Transform defined in \cite{hassani}),

\begin{equation}\label{eq:four-transf-sgd}
e^{\frac{-x^2}{2}} = \mathcal{F}^{-1}\left[ e^{\frac{-k^2}{2}} \right] = \frac{1}{\sqrt{2\pi}}\int_{-\infty}^{\infty}e^{\frac{-k^2}{2}} e^{ixk}dk. 
\end{equation}

Since this result is fundamental for the rest of the section, we present a derivation  in the following example.

\begin{example}\label{exa:gaussian-fourier-tr}
We  compute the following integral,

$$I(x) = \frac{1}{\sqrt{2\pi}}\int_{-\infty}^{\infty}e^{-\frac{k^2}{2}} e^{ixk}dk. $$

Clearly the previous integral is equal to the following integral,

\begin{equation}\label{eq:four-transf-sgd-2}
I(x) = \frac{2}{\sqrt{2\pi}}\int_{0}^{\infty}e^{-\frac{k^2}{2}} \cos(xk)dk. 
\end{equation}

Taking the derivative of $I$ with respect to $x$  and integrating by parts we get,

\begin{eqnarray*}
\frac{dI(x)}{dx} & = & \frac{-2k}{\sqrt{2\pi}}\int_{0}^{\infty}e^{-\frac{k^2}{2}} \sin(xk)dk,\\
&=& -x \left[ \frac{2}{\sqrt{2\pi}}\int_{0}^{\infty}e^{-\frac{k^2}{2}} \cos(xk)dk \right].
\end{eqnarray*}

Then the integral, $I(x)$ satisfies the ordinary differential equation,

$$\frac{dI(x)}{dx}  = -xI(x).$$

The solution of this ODE is $I(x) = C e^{\frac{-x^2}{2}}$, to find $C$, we evaluate \eqref{eq:four-transf-sgd-2} in $x=0$, 

$$C = I(0) = \frac{2}{\sqrt{2\pi}}\int_{0}^{\infty}e^{-\frac{k^2}{2}} dk = 1. $$

So we found that, 

$$I(x) = e^{\frac{-x^2}{2}}.$$

\end{example}
\ifkeith
\marginnote{\scriptsize (\cite{spe-func} Problem 6.1) For (a) XD I must study contour int., for (b)   integrate term by term using Feynman parameter trick for even an odd normal moments.}
\vspace{-10pt}
\fi
\begin{exercise}\label{ex:gauss-four-pro}
Compute the integral \eqref{eq:four-transf-sgd-2} using (a) contour integration, and (b) by expanding $\cos(xk)$ in powers of $k$ and integrating term by term.
\end{exercise}

Now if we differentiate the integral \eqref{eq:four-transf-sgd-2} $n$ times respect to $x$ we get,
$$\frac{d^n }{d x^n}\left( e^{\frac{-x^2}{2}} \right) = \frac{(i)^{n}}{\sqrt{2\pi}}\int_{-\infty}^{\infty} k^{n} e^{\frac{-k^2}{2}} e^{ixk}dk. $$
Now inserting the previous result in the Rodrigues formula  \eqref{eq:her-prob-rodri} we find the \textit{integral representation} of the Chebyshev-Hermite polynomials,
\ifkeith
\marginnote{\scriptsize Note that there is no real difference in the integral representation if we choose $-ixk$ or $ikx$ as exponents in our definition for the Fourier Transform.}
\fi
\begin{equation}\label{eq:her-prob-int-rep}
He_n(x) = \frac{ e^{\frac{x^2}{2}} (-i)^{n}}{\sqrt{2\pi}}\int_{-\infty}^{\infty} k^{n} e^{\frac{-k^2}{2}} e^{ixk}dk.
\end{equation}

From the previous result we can easily find the following Fourier transforms,
\begin{equation}
\mathcal{F}^{-1}\left[ k^n e^{\frac{-k^2}{2}} \right] = i^{n} He_n(x)  e^{\frac{-x^2}{2}}, \qquad \mathcal{F}\left[ x^n e^{\frac{-x^2}{2}} \right] = (-i)^{n} He_n(k)  e^{\frac{-k^2}{2}}.
\end{equation}
With all the previous tools in hand, now we are ready to understand the relation of the raw moments ${\bf E}[Y^n]$ from a random variable $Y\stackrel{d}{=}\mathcal{N}(\hat{x},1)$ with normal distribution and the Chebyshev-Hermite polynomials.

\begin{example}\label{exa:gauss-moments-explicit}
In spirit of \cite{willink} we present a remarkable way to compute the moments of a random variable $\hat{Y}\stackrel{d}{=}\mathcal{N}(\mu,\sigma)$ with a general Gaussian distribution. For this we rewrite the integral representation \eqref{eq:her-prob-int-rep} as follows,
$$He_n(x) (-i)^{n}= \frac{ (-1)^{n}}{\sqrt{2\pi}}\int_{-\infty}^{\infty} k^{n} e^{\frac{-(k-ix)^2}{2}} dk= \frac{ 1}{\sqrt{2\pi}}\int_{-\infty}^{\infty} z^{n} e^{\frac{-(z+ix)^2}{2}}   dz, $$
where we used the change of variable $z=-k$ in the last step. Now we let $x=i\hat{x}$ in the last equation and we recognize  the raw moments ${\bf E}[Y^n]$ from a random variable $Y\stackrel{d}{=}\mathcal{N}(\hat{x},1)$ with normal distribution, 
\begin{equation}\label{eq:her-prob-moments}
He_n(i\hat{x}) (-i)^{n} = \frac{ 1}{\sqrt{2\pi}}\int_{-\infty}^{\infty} z^{n} e^{\frac{-(z-\hat{x})^2}{2}}   dz = {\bf E}[Y^n].
\end{equation}
So we solved part of the mystery of the coefficients of the raw moments of  ${\bf E}[Y^n]$, they are they are all positive as can be seen from \eqref{eq:her-prob-expl}, in fact they are the \textit{absolute value of the coefficients} of the Chebyshev-Hermite polynomials. A simpler way to see this is by doing the change $x\rightarrow i\hat{x}$ and $t\rightarrow -i\hat{t}$ in \eqref{eq:herm-prob-gen} obtaining precisely the moment generating function $m_Y(\hat{t})= e^{\hat{x}\hat{t}+\frac{\hat{t}^2}{2}}$ of a random variable $Y\stackrel{d}{=}\mathcal{N}(\hat{x},1)$.

Now scaling properly \eqref{eq:her-prob-moments}, we can find the the raw moments,  ${\bf E}[\hat{Y}^n]$ using $He_n(x)$ and their explicit expression, the details are left as an exercise for the reader, 
\ifkeith
\marginnote{\footnotesize First see that the left equation holds with help of \eqref{eq:her-prob-moments} and then using left and \eqref{eq:her-prob-expl} obtain the right equation.}
\vspace{-10pt}
\fi
\begin{equation}\label{eq:moment-gauss-expl}
{\bf E}[\hat{Y}^n] =  (-i\sigma)^{n} He_n(i\mu/\sigma), \qquad {\bf E}[\hat{Y}^n] = \sigma^{n} n! \sum_{j=0}^{\left\lfloor{\frac{n}{2}}\right\rfloor}\frac{ (\mu/\sigma)^{n-2j} }{2^j (n-2j)!j!}. 
\end{equation} 
\end{example}

After a long road of working with \textit{Chebyshev-Hermite polynomials} and \textit{Chebyshev-Hermite functions}, we could not resist adding a section entirely to the \textit{remarkable} Fourier transform of \textbf{Hermite functions} \eqref{eq:hermite-fun-conv}. This deserves to be presented to encourage the reader to learn more about the incredible field of Fourier analysis.

\subsection{Fourier Transform of Hermite functions*}\label{sesu:her-phys-four-trans}

\iftex \small \fi

* \emph{This section can be omitted without loss of continuity. It is meant for the ambitious reader or as an interesting general reference}.

\iftex \normalsize \fi

The \iftex Fourier transform \fi \ifblog \href{https://en.wikipedia.org/wiki/Fourier_transform}{Fourier transform} \fi is an essential tool in many areas of math and science, we present here the outstanding result that the Hermite functions $h_n(x)$ \eqref{eq:hermite-fun-conv}, are the \iftex \textit{eigenfunctions} of the Fourier Transform \fi \ifblog\href{https://en.wikipedia.org/wiki/Hermite_polynomials#Hermite_functions_as_eigenfunctions_of_the_Fourier_transform}{eigenfunctions of the Fourier Transform}\fi  integral operator.

To informally see this, we  take the Hermite equation and its Fourier transform, lets write equation \eqref{eq:her-phy-eq} using an arbitrary function $f(x)$,
$$
f''(x)-x^2f(x) = -\left(2n+1\right)f(x).
$$

Using \iftex elementary results \fi \ifblog \href{https://en.wikipedia.org/wiki/Fourier_transform#Tables_of_important_Fourier_transforms}{elementary results} \fi from Fourier transform theory \cite{folland}, we see that the Fourier transform of this equation is,
$$\hat{f}''(k)-k^2\hat{f}(k) = -\left(2n+1\right)\hat{f}(k),
$$

where we denoted $\mathcal{F}[f](k)=\hat{f}(k)$, and took $\xi \rightarrow k$ from the table in \cite{folland}.

This means that both the function $f(x)$ and its Fourier Transform $\hat{f}(k)$ satisfy the same differential equation, \eqref{eq:her-phy-eq}. One possible option can be that $f(x)$ and its Fourier Transform $\hat{f}(k)$ are both Hermite functions $h_n$ and proportional\footnote{In fact more technically, this means that $\mathcal{F}$ preserves each eigenspace of the differential operator $\mathcal{D} = (d^2/dx^2)-x^2$, moreover the Fourier transform operator conspires with the space to restric the possibilites and make the only solution that $f(x)$ and its Fourier Transform $\hat{f}(k)$ are both Hermite functions and proportional, see \cite{reeder} for more detalis.\emo}. However we will see in a moment that this is not the complete answer. 

Now we proceed to find the \iftex Fourier transform of the Hermite functions \fi \ifblog\href{https://web2.ph.utexas.edu/~gleeson/ElectricityMagnetismAppendixB.pdf}{Fourier transform of the Hermite functions}\fi  and to check they are effectively the eigenfunctions of the operator. For a more complete and remarkable treatment of the eigenproblem of the Fourier Transform  see \cite{reeder}. Coming back to our approach first note that by the Fourier inversion theorem, applying twice the Fourier operator yields $\mathcal{F}^2[f](x)=f(-x)$, then $\mathcal{F}^4=I$. 

Next to gain some insight, we consider momentarily the eigenvalue problem for the Fourier linear integral operator, ${\mathcal{F}}[f](k) = \lambda f(k)$. Then if we apply $\mathcal{F}$ three more times to ${\mathcal{F}}[f](k) = \lambda f(k)$ we get,
$$f(k) = I[f](k) = {\mathcal{F}}^4[f](k) = \lambda^4 f(k).$$

This forces $\lambda^4=1$, so the eigenvalues are $1,-1,i,-i$. In fact even functions have ${\mathcal{F}}$-eigenvalues $\pm 1$ and odd functions have ${\mathcal{F}}$-eigenvalues $\pm i$. 

Now we use the generating function of the Hermite functions $h_n(x)$ to find their Fourier transform. First note that the function $\hat{G}(x,t)=e^{2xt-t^2}$ is the generating function for the Hermite polynomials $H_n(x)$ \eqref{eq:hermite-conv}, this can be easily seen by reproducing the procedure in Section  \ref{sesu:her-exp-expre}  with $\hat{G}(x,t)$ and show that one arrives to \eqref{eq:her-phy-expl}. Hence the generating function for the Hermite functions $h_n(x)$ is,

\begin{equation}\label{eq:he-phy-fun-gen}
e^{\left( \frac{-x^2}{2}+2xt-t^2\right)} = \sum_{n=0}^\infty h_n(x)\frac{t^n}{n!} .
\end{equation}

Then we can rewrite the right hand side of the previous equation,
$$e^{\left( \frac{-x^2}{2}+2xt-t^2\right)} = e^{\left( \frac{-x^2+4xt-4t^2}{2}+\frac{2t^2}{2}\right)} = e^{-\frac{(x-2t)^2}{2}}\cdot e^{t^2}.$$

Applying the Fourier transform $\mathcal{F}$  to \eqref{eq:he-phy-fun-gen}, the right hand side yields,

$$\mathcal{F}\left[e^{\left( -\frac{x^2}{2}+2xt-t^2\right)}\right] = e^{t^2} \mathcal{F}\left[e^{-\frac{(x-2t)^2}{2}}\right] =  e^{t^2} e^{-\frac{k^2}{2}} e^{-2ikt} = e^{\left( -\frac{k^2}{2}+2k(-it)-(-it)^2\right)}, $$

where we used the result for a Fourier transform of a Gaussian function \eqref{eq:four-transf-sgd} and the \iftex space shifting \fi \ifblog \href{https://en.wikipedia.org/wiki/Fourier_transform#Tables_of_important_Fourier_transforms}{space shifting} \fi property for the Fourier transform \cite{folland}.

We recognize this Fourier transform as the generating function \eqref{eq:he-phy-fun-gen} with $t\rightarrow -it$ and $x\rightarrow k$, then the Fourier transform of the right and left hand side yields,
$$
 \sum_{n=0}^\infty h_n(k)(-i)^n\frac{t^n}{n!}=\mathcal{F}\left[e^{\left( -\frac{x^2}{2}+2xt-t^2\right)}\right] = \mathcal{F}\left[\sum_{n=0}^\infty h_n(x)\frac{t^n}{n!}\right] = \sum_{n=0}^\infty \mathcal{F}\left[h_n(x)\right]\frac{t^n}{n!}.
$$
Now equating for the coefficients of the series we find,

\begin{equation}
\mathcal{F}\left[h_n\right](k) = (-i)^n  h_n(k).
\end{equation}

Then we see more clearly that the Hermite functions $h_n(k)$ solve the eigenvalue problem  ${\mathcal{F}}[f](k) = \lambda f(k)$ with eigenvalues $1,-1,i,-i$. 

To comprehend why this is the only solution of the eigenvalue problem, see the derivation from \cite{reeder}, in addition, another useful source is  \cite{wiener}. It turns out this remarkable result has some applications, like proving the Heisenberg's inequality \iftex (uncertainty principle)\fi \ifblog \href{https://en.wikipedia.org/wiki/Uncertainty_principle}{(uncertainty principle)} \fi\cite{spe-func}, also one can  prove \iftex Plancherel formula \fi \ifblog \href{https://en.wikipedia.org/wiki/Plancherel_theorem}{Plancherel formula} \fi with Hermite polynomials, see \cite{Hermite-encyclopedia} and references therein.

\subsection{Hermite polynomials in higher dimensions}
\label{sesu:hermite-d-dimensions}
\ifkeith
\marginnote{\footnotesize Probably the most interesting other \href{https://en.wikipedia.org/wiki/Hermite_polynomials\#Generalizations}{generalization} for this article is \cite{roman}, where they define  \tiny$He_n^{\gamma}(x)=e^{\frac{-\gamma D^2}{2}}x^n$\footnotesize with a generalized Weierstrass operator. }
\vspace{-5pt}
\fi
One can generalize the single variable Chebyshev-Hermite polynomials $He_n(x)$ in various ways. An option is to construct a two-parameter family of single variable orthogonal polynomials $He_n^{             \gamma}(x)$ as is done \cite{two-par-hermite}, this is useful for generalizations of the Fourier transform or the quantum harmonic oscillator algebra \cite{rosenblum}. Also, other interesting generalizations are \cite{batahan},\cite{cesarano},\cite{roman}. Now we focus on the generalization of the polynomials to higher dimensions or multiple variables.

If we take the Rodrigues formula \eqref{eq:her-prob-rodri}, we recognize that it can be written as follows\footnote{One can also start from  \eqref{eq:rodri-weight} with different weight functions to construct different families of classical orthogonal polynomials, this is the approach taken by \cite{hassani}.\emo}

\begin{equation}\label{eq:rodri-weight}
He_n(x)=\frac{(-1)^n}{w(x)}\frac{d^n}{dx^n}[w(x)],
\end{equation}

where $w(x)=e^{-\frac{x^2}{2}}$ is the weight function of the Chebyshev-Hermite polynomials.

Now using the usual standard tensor notation, very much like the manuscript \cite{yuan}, let $\mathbf{x}=(x_1,\cdots,x_d)\in \mathbb{R}^d$, we can extend both the definition of the weight function $w(\mathbf{x})$ and the Chebyshev-Hermite polynomials $\mathbf{He}
^{(n)}(\mathbf{x})$ as follows,

\begin{equation}\label{eq:rodri-hermite-weight}
w(\mathbf{x}) = e^{\frac{-\Vert\mathbf{x}\Vert^2}{2}},\qquad \mathbf{He}
^{(n)}(\mathbf{x}) = \frac{(-1)^n}{w(\mathbf{x})}\boldsymbol{\nabla}^{(n)}[w(\mathbf{x})],
\end{equation}

where both  $\mathbf{He}
^{(n)}(\mathbf{x})$ and $\boldsymbol{\nabla^{(n)}}$ are tensors of rank $n$, which can also be written in index notation as $\nabla^{(n)}_{\alpha_1\alpha_2\cdots \alpha_{n-1}\alpha_n}$ where  $\{\alpha_i\}_{i=0}^{n}$ are a list of indexes symbolizing any of the $d$ space coordinates.
\ifkeith
\marginnote{\footnotesize For Exercise 7 write \eqref{eq:rodri-hermite-weight} for each case in full index notation and do the algebra carefully. For Exercise 8 use induction with \eqref{eq:rodri-hermite-weight} and the product rule.}
\vspace{-10pt}
\fi
\begin{exercise}\label{ex:her-high-prop}
Check the following special cases for the first $d$-dimensional Chebyshev-Hermite polynomials,

\begin{eqnarray*}
He^{(0)}(\mathbf{x}) &=& 1,\\
He^{(1)}_{\alpha_1}(\mathbf{x}) &=& x_{\alpha_1},\\
He^{(2)}_{\alpha_1\alpha_2}(\mathbf{x}) &=& x_{\alpha_1}x_{\alpha_2}-\delta_{\alpha_1\alpha_2},\\
He^{(3)}_{\alpha_1\alpha_2\alpha_3}(\mathbf{x}) &=& x_{\alpha_1}x_{\alpha_2}x_{\alpha_3}-x_{\alpha_1}\delta_{\alpha_2\alpha_3}-x_{\alpha_2}\delta_{\alpha_1\alpha_3}-x_{\alpha_3}\delta_{\alpha_1\alpha_2}.
\end{eqnarray*}
\end{exercise}
\begin{exercise}\label{ex:her-high-prop-gen-rodrig}
Obtain a generalization of the second equation of exercise \ref{ex:her-prop-rodrigues},

\begin{equation}
\mathbf{He}
^{(n)}(\mathbf{x}) = (\mathbf{x}-\boldsymbol{\nabla})^{(n)}\cdot 1, \quad or \quad He^{(n)}_{\alpha_1\alpha_2\cdots \alpha_{n-1}\alpha_n}(\mathbf{x}) =  (x_{\alpha}-{\nabla}_{\alpha})^{(n)}\cdot 1.\medskip
\end{equation}
\end{exercise}

Now we present the generalization of two fundamental properties, Lemma \ref{le:her-recurrence} of the Chebyshev-Hermite polynomials, which can be seen as a special case of the following,

\begin{lemma}\label{le:her-hihg-recurrence} 
The sequence of the d-dimensional Chebyshev-Hermite polynomials, $\left\{\mathbf{He}
^{(n)}(\mathbf{x})\right\}_{n=0}^{\infty}$,  satisfy the two following recurrence relations,
\begin{equation}\label{eq:he-high-re-1}
{\nabla}_{\alpha} He^{(n)}_{\alpha_1\alpha_2\cdots \alpha_{n-1}\alpha_n}(\mathbf{x}) = \sum_{k=1}^n \delta_{\alpha\alpha_{k}}He^{(n-1)}_{\alpha_1\alpha_2\cdots \alpha_{k-1}\alpha_{k+1}\cdots\alpha_n}(\mathbf{x}),
\end{equation}
\begin{equation}\label{eq:he-high-re-2}
{x}_{\alpha} He^{(n)}_{\alpha_1\alpha_2\cdots \alpha_{n-1}\alpha_n}(\mathbf{x}) = He^{(n+1)}_{\alpha\alpha_1\alpha_2\cdots \alpha_{n-1}\alpha_n} + \sum_{k=1}^n \delta_{\alpha\alpha_{k}}He^{(n-1)}_{\alpha_1\alpha_2\cdots \alpha_{k-1}\alpha_{k+1}\cdots\alpha_n}(\mathbf{x}).
\end{equation}
\end{lemma}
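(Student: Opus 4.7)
The plan is to emulate the generating-function derivation of the one-dimensional Lemma~\ref{le:her-recurrence}. My first step is to introduce the multivariate generating function
$$G(\mathbf{x},\mathbf{t}) = e^{\mathbf{x}\cdot\mathbf{t} - \Vert\mathbf{t}\Vert^2/2} = \frac{w(\mathbf{x}-\mathbf{t})}{w(\mathbf{x})},$$
and to verify, by Taylor expanding $w(\mathbf{x}-\mathbf{t})$ about $\mathbf{t}=\mathbf{0}$ and invoking the multivariate Rodrigues formula~\eqref{eq:rodri-hermite-weight}, that
$$G(\mathbf{x},\mathbf{t}) = \sum_{n=0}^{\infty}\frac{1}{n!}\, He^{(n)}_{\alpha_1\cdots\alpha_n}(\mathbf{x})\, t_{\alpha_1}\cdots t_{\alpha_n}$$
with Einstein summation on repeated tensor indices. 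This is the natural $d$-dimensional analogue of~\eqref{eq:herm-prob-gen}.

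Next, I would observe that $G$ satisfies two elementary PDEs mirroring the one-dimensional case, namely $\nabla_\alpha G = t_\alpha\, G$ and $\partial_{t_\alpha} G = (x_\alpha - t_\alpha)\, G$, which fall out by direct differentiation of the exponential. To obtain the first recurrence I would substitute the series into $\nabla_\alpha G = t_\alpha G$ and then apply $\partial^{n}/\partial t_{\beta_1}\cdots\partial t_{\beta_n}$ at $\mathbf{t}=\mathbf{0}$ to both sides. The left-hand side yields $\nabla_\alpha He^{(n)}_{\beta_1\cdots\beta_n}$ directly by symmetry of $\mathbf{He}^{(n)}$; the extra factor $t_\alpha$ on the right forces the surviving term to come from $\mathbf{He}^{(n-1)}$ and produces the $n$-fold symmetrization of the multi-index $\{\alpha,\beta_1,\ldots,\beta_{n-1}\}$, which collapses against the symmetric tensor $\mathbf{He}^{(n-1)}$ to precisely $\sum_{k=1}^{n}\delta_{\alpha\beta_k}\,He^{(n-1)}_{\beta_1\cdots\beta_{k-1}\beta_{k+1}\cdots\beta_n}$, proving~\eqref{eq:he-high-re-1}.

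For the second recurrence I would substitute the series into $\partial_{t_\alpha} G = (x_\alpha - t_\alpha) G$. On the left, $\partial_{t_\alpha}$ picks out one tensor index via the symmetry of $\mathbf{He}^{(n)}$ and produces $\sum_{n\geq 1}\frac{1}{(n-1)!}\,He^{(n)}_{\alpha\alpha_1\cdots\alpha_{n-1}}\,t_{\alpha_1}\cdots t_{\alpha_{n-1}}$; on the right, $x_\alpha G$ multiplies the series term-by-term, while $-t_\alpha G$ contributes exactly the Kronecker-delta sum computed in the previous paragraph. Matching coefficients at order $n$ then delivers~\eqref{eq:he-high-re-2} immediately.

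The main obstacle I anticipate is the bookkeeping in the coefficient-extraction step: the tensors $\mathbf{He}^{(n)}$ are symmetric but their indices appear in different positions on the two sides of each identity, so informal coefficient matching would be error-prone. The correct maneuver is to apply $\partial^{n}/\partial t_{\beta_1}\cdots\partial t_{\beta_n}|_{\mathbf{t}=\mathbf{0}}$ to both sides and, where necessary, invoke tensor symmetry to re-identify the index set. Everything after this reduces to the index algebra already familiar from the one-dimensional proof of Lemma~\ref{le:her-recurrence}.
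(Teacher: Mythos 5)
Your proposal is correct, but it is worth noting that the paper does not actually prove Lemma~\ref{le:her-hihg-recurrence} at all: its ``proof'' is a one-line deferral to \cite{grad}. What you supply is therefore a genuine, self-contained argument where the text gives none, and it is the natural multivariate extension of the paper's own generating-function proof of the one-dimensional Lemma~\ref{le:her-recurrence}. All the ingredients check out: the identity $w(\mathbf{x}-\mathbf{t})/w(\mathbf{x})=e^{\mathbf{x}\cdot\mathbf{t}-\Vert\mathbf{t}\Vert^2/2}$ together with the Taylor expansion $w(\mathbf{x}-\mathbf{t})=\sum_n\frac{(-1)^n}{n!}t_{\alpha_1}\cdots t_{\alpha_n}\nabla^{(n)}_{\alpha_1\cdots\alpha_n}w(\mathbf{x})$ and the Rodrigues formula \eqref{eq:rodri-hermite-weight} gives exactly your tensor series; the two PDEs $\nabla_\alpha G=t_\alpha G$ and $\partial_{t_\alpha}G=(x_\alpha-t_\alpha)G$ are immediate; and the coefficient extraction via $\partial^n/\partial t_{\beta_1}\cdots\partial t_{\beta_n}\vert_{\mathbf{t}=\mathbf{0}}$ produces the Kronecker-delta sums in \eqref{eq:he-high-re-1} and \eqref{eq:he-high-re-2} with the right combinatorial factors (the $(n-1)!$ from permuting the remaining $t$-factors cancels the $1/(n-1)!$ in the series). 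The one hypothesis you lean on repeatedly --- full symmetry of $\mathbf{He}^{(n)}$ in its indices --- you should state and justify explicitly; it is a one-line consequence of the commutativity of the partial derivatives in \eqref{eq:rodri-hermite-weight}, but without it the coefficient matching you describe is not licensed. With that sentence added, your argument is complete and, pleasantly, recovers the one-dimensional Lemma~\ref{le:her-recurrence} as the case $d=1$.
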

\begin{proof}
See \cite{grad} for details.
\end{proof}

For a wider generalization of multivariate Hermite polynomials see \cite{willink}, in this article several results for multivariate normal distributions in relation to the multivariate Hermite polynomials are presented. This whole section is sort of meant to give a brief warm up to understand the former article. 

Next, we present the generalization of the orthogonality relation, Lemma \ref{le:her-prob-orth} of the Chebyshev-Hermite polynomials, which can be seen as a special case of the following,

\begin{lemma} \label{le:her-high-prob-orth}
The d-dimensional Chebyshev-Hermite polynomials are orthogonal with respect to the weight function $w(\mathbf{x}) =e^{\frac{-\Vert\mathbf{x}\Vert^2}{2}}$ in the space $\mathbb{R}^d$. In other words,
\begin{equation} \label{eq:her-high-prob-orth} 
\int_{\mathbb{R}^d} e^{\frac{-\Vert\mathbf{x}\Vert^2}{2}}He^{(n)}_{\boldsymbol{\alpha}}(\mathbf{x})He^{(n')}_{\boldsymbol{\beta}}(\mathbf{x})d\mathbf{x}=({2\pi})^{d/2}\prod_{i=1}^{d} n_i! \boldsymbol{\delta}_{\boldsymbol{\alpha \beta}}^{(n+n')}\delta_{nn'},
\end{equation}
where $\boldsymbol{\alpha}$ is an abbreviation of the subscripts $\alpha_1\alpha_2\cdots \alpha_{n-1}\alpha_n$; $\boldsymbol{\delta}_{\boldsymbol{\alpha \beta}}^{(n+n')}$ is a generalized Kronecker delta, which is unity if the indexes $\boldsymbol{\alpha}$ are a permutation of $\boldsymbol{\beta}$, and zero otherwise. Finally $n_i$ is the number of occurrences of $x_i$ in $\boldsymbol{\alpha}$.
\end{lemma}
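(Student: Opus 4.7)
The plan is to reduce the $d$-dimensional statement to the one-dimensional orthogonality already established in Lemma \ref{le:her-prob-orth}, by exploiting the tensor-product structure of both the weight and the Rodrigues-style definition \eqref{eq:rodri-hermite-weight}.

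First I would observe that the weight function factorizes coordinatewise,
\begin{equation*}
w(\mathbf{x}) = e^{-\Vert \mathbf{x}\Vert^2/2} = \prod_{i=1}^{d} e^{-x_i^2/2},
\end{equation*}
and that the mixed partial derivatives $\nabla^{(n)}_{\alpha_1\cdots\alpha_n} = \partial_{x_{\alpha_1}}\cdots\partial_{x_{\alpha_n}}$ commute, so only the number $n_i$ of occurrences of each index $i\in\{1,\dots,d\}$ in the multi-index $\boldsymbol{\alpha}$ matters. Writing $\nabla^{(n)}_{\boldsymbol{\alpha}} = \prod_{i=1}^d \partial_{x_i}^{n_i}$ and applying it to the factored weight, the Rodrigues definition in \eqref{eq:rodri-hermite-weight} reduces to a product of one-dimensional Rodrigues formulae \eqref{eq:her-prob-rodri}:
\begin{equation*}
He^{(n)}_{\boldsymbol{\alpha}}(\mathbf{x}) \;=\; \frac{(-1)^{n}}{w(\mathbf{x})}\prod_{i=1}^d \partial_{x_i}^{n_i} w(\mathbf{x}) \;=\; \prod_{i=1}^d \frac{(-1)^{n_i}}{e^{-x_i^2/2}}\frac{d^{n_i}}{dx_i^{n_i}}e^{-x_i^2/2} \;=\; \prod_{i=1}^d He_{n_i}(x_i).
\end{equation*}
This factorization is the heart of the proof; everything afterwards is bookkeeping.

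Next, letting $m_i$ denote the number of occurrences of $i$ in $\boldsymbol{\beta}$, I would substitute the factored forms of the weight and of both polynomials into the integral, and invoke Fubini to split it into a product of one-dimensional integrals. Each factor is then handled by Lemma \ref{le:her-prob-orth}, giving
\begin{equation*}
\int_{\mathbb{R}^d} e^{-\Vert\mathbf{x}\Vert^2/2}\, He^{(n)}_{\boldsymbol{\alpha}}(\mathbf{x})\, He^{(n')}_{\boldsymbol{\beta}}(\mathbf{x})\, d\mathbf{x} \;=\; \prod_{i=1}^d \sqrt{2\pi}\, n_i!\, \delta_{n_i m_i} \;=\; (2\pi)^{d/2}\prod_{i=1}^d n_i! \prod_{i=1}^d \delta_{n_i m_i}.
\end{equation*}

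The final step is to identify $\prod_{i=1}^d \delta_{n_i m_i}$ with the generalized Kronecker delta $\boldsymbol{\delta}^{(n+n')}_{\boldsymbol{\alpha\beta}} \delta_{nn'}$. The product of coordinate-wise deltas is $1$ precisely when the two multi-indices $\boldsymbol{\alpha}$ and $\boldsymbol{\beta}$ have identical occurrence counts of every coordinate label, which by definition is exactly the condition that one is a permutation of the other; this also forces equal total lengths $n = n'$. I expect the main (minor) obstacle to be purely notational, namely the careful matching of the definition of $\boldsymbol{\delta}^{(n+n')}_{\boldsymbol{\alpha\beta}}$ with the product of elementary Kronecker deltas coming out of the factored orthogonality relation, together with a clean separation into three cases ($n\neq n'$; $n=n'$ with differing multisets; $n=n'$ with matching multisets) to verify both sides agree.
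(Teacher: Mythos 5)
Your proof is correct, and it takes a genuinely different route from the paper. The paper does not give a self-contained argument: it cites \cite{grad} for the details and merely sketches how each factor in \eqref{eq:her-high-prob-orth} emerges from a $d$-dimensional repetition of the integration-by-parts scheme used in Lemma \ref{le:her-prob-orth} (the Rodrigues form of one factor, repeated partial integration, vanishing boundary terms, and the leading coefficients producing $\prod_i n_i!$). You instead reduce the whole statement to the already-proved one-dimensional case by observing the tensor-product structure $He^{(n)}_{\boldsymbol{\alpha}}(\mathbf{x})=\prod_{i=1}^d He_{n_i}(x_i)$, which follows cleanly from \eqref{eq:rodri-hermite-weight} since $w(\mathbf{x})=\prod_i e^{-x_i^2/2}$, the mixed partials commute, and $(-1)^n=\prod_i(-1)^{n_i}$; this identity is consistent with the explicit low-order cases of Exercise \ref{ex:her-high-prop}, and Fubini then finishes the job. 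Your approach buys a complete, short, self-contained proof that also makes the identification $\prod_i\delta_{n_i m_i}=\boldsymbol{\delta}^{(n+n')}_{\boldsymbol{\alpha\beta}}\delta_{nn'}$ transparent (equal occurrence counts is precisely the permutation condition, and it forces $n=n'$). What the paper's integration-by-parts outline buys in exchange is a template that does not depend on the weight factorizing coordinatewise, and hence extends more directly to the general multivariate Hermite polynomials with nontrivial covariance structure discussed in \cite{willink}, where your product decomposition is no longer available.
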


\begin{proof}
A detailed proof of this lemma can be found in \cite{grad}, here we give an outline of the reasons of the appearance of each term in the equation following a generalization of the proof of Lemma \ref{le:her-prob-orth}. First the term $({2\pi})^{d/2}$ appears as a result of the normalization of the $d$-dimensional  weight function $w(\mathbf{x})$.

 Both the term $\prod_{i=1}^{d} n_i!$ and $\boldsymbol{\delta}_{\boldsymbol{\alpha \beta}}^{(n+n')}$ appear as a consequence of the repetitive process of partial integration as in the proof of Lemma \ref{le:her-prob-orth}. If the degree of the polynomial for each of the coordinates of $He^{(n)}_{\boldsymbol{\alpha}}(\mathbf{x})$ do not match the degree of the polynomials of $He^{(n)}_{\boldsymbol{\alpha}}(\mathbf{x})$, in other words the indexes $\boldsymbol{\alpha}$ are not a permutation of $\boldsymbol{\beta}$, the iterative integration by parts will yield a zero factor. 
 
The term $\prod_{i=1}^{d} n_i!$ appears applying the argument of the proof  of Lemma \ref{le:her-prob-orth} for each of the leading coefficients from all coordinate variable polynomials $p(\alpha_i)$ appearing in $He^{(n)}_{\boldsymbol{\alpha}}(\mathbf{x})$. Clearly the last term $\delta_{nn'}$ is a consequence of the fact that if $n\neq n'$, then at least one polynomial degree of $He^{(n)}_{\boldsymbol{\alpha}}(\mathbf{x})$ corresponding to a coordinate should be different than the former polynomial in $He^{(n')}_{\boldsymbol{\beta}}(\mathbf{x})$, therefore in this case  the partial integration also generates a zero factor.
\end{proof}

The $d$-dimensional Chebyshev-Hermite polynomials share more properties with the $1$-dimensional version. For example they form a complete basis in $L^2_{w(\mathbf{x})}(\mathbb{R}^d)$, they can also be used to numerically compute integrals in  $\mathbb{R}^d$ via \textit{cubature}(quadrature in higher dimensions) as in Section \ref{sesu:quadrature-1d}. This fact is one of the foundations for the construction of the equilibrium distribution of the \iftex Lattice Boltzmann method \fi \ifblog \href{https://en.wikipedia.org/wiki/Lattice_Boltzmann_methods}{Lattice Boltzmann method} \fi\cite{lattice} for solving numerically PDE(partial differential equations).

\section{Applications to the connection problem of polynomials, probability and  graph theory}\label{se:her-app}

In this section we present some applications of the Chebyshev-Hermite polynomials to the classical connection problem of the theory of polynomials, the representation of densities or functions of random variables using the polynomials and the remarkable connection of graph theory with the Chebyshev-Hermite polynomials.

\subsection{Connection problem of Hermite polynomials and Gaussian moments}
\label{sesu:her-con-pro}

Considering again the raw moments ${\bf E}[Y^n]$ from a random variable $Y\stackrel{d}{=}\mathcal{N}(x,1)$ with normal distribution, we want to explore further on the relation of the moment generating function  $m_Y(t)= e^{xt+\frac{t^2}{2}}$ and the generating function \eqref{def:herm-prob-gen} of the Chebyshev-Hermite polynomials. After digging a little into the problem, we realize we are in the domain of the general connection problem of the theory of polynomials \cite{spe-func},\cite{roman}:

\begin{definition}\iftex{\bf : \cite{con-problem}  Connection problem of $\{S_n(x)\}_{n=0}^{\infty}$ and  $\{P_n(x)\}_{n=0}^{\infty}$}\fi\ifblog <em><span style=\"font-weight: 700;\">: \cite{con-problem}  Connection problem of $\{S_n(x)\}_{n=0}^{\infty}$ and  $\{P_n(x)\}_{n=0}^{\infty}$</span></em>\fi

Given two polynomial sets $\{S_n(x)\}_{n=0}^{\infty}$ and $\{P_n(x)\}_{n=0}^{\infty}$ of $\deg(S_n) = \deg(P_n) = n$. The so-called
\textit{connection problem} between them asks to find the coefficients $C_m(n)$ in the expression:
\begin{equation}\label{eq:conn-equation}
S_n(x) = \sum_{m=0}^n C_m(n) P_m(x). 
\end{equation}
\end{definition}

Without realizing it we have solved this problem for both $\{x^n\}_{n=0}^{\infty}$ , $\{He_n(x)\}_{n=0}^{\infty}$ and $\{(2x)^n\}_{n=0}^{\infty}$ , $\{H_n(x)\}_{n=0}^{\infty}$ in equations \eqref{eq:her-prob-expl}, \eqref{eq:her-prob-inverse} and \eqref{eq:her-phy-expl}, \eqref{eq:her-phys-inverse} respectively. Now if we consider the two polynomials sets as $\{{\bf E}[Y^{n}]({x})\}_{n=0}^{\infty} $ and $\{ He_n({x})\}_{n=0}^{\infty}$, solving the connection problem between the two will be a way to explode the similarities between the moment generating function $m_Y(t)$ and the Chebyshev-Hermite generating function \eqref{eq:herm-prob-gen}. This is containted in the following theorem,

\begin{theorem}\label{th:her-conn-pro}
Let $\{{\bf E}[Y^{n}]({x})\}_{n=0}^{\infty} $ be the raw moments of the random variable \iftex \break \fi $Y\stackrel{d}{=}\mathcal{N}(x,1)$, where $x$ is the variable of the polynomial set, and $\{ He_n({x})\}_{n=0}^{\infty}$ be the Chebyshev-Hermite polynomial set. Then the connection problem between the two sets as defined in \eqref{eq:conn-equation} is solved by the following,
\begin{equation} \label{eq:gauss-her-conn}
He_n(x) = n! \sum_{j=0}^{\left\lfloor{\frac{n}{2}}\right\rfloor}\frac{(-1)^{j} {\bf E}[Y^{n-2j}]({x}) }{(n-2j)!j!},
\end{equation}
\begin{equation} \label{eq:gauss-her-conn-inverse}
{\bf E}[Y^{n}]({x}) = n! \sum_{j=0}^{\left\lfloor{\frac{n}{2}}\right\rfloor} \frac{He_{n-2j}(x)}{(n-2j)!j!}.
\end{equation}
\end{theorem}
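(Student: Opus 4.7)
The plan is to exploit the simple identity $e^{t^2}\cdot e^{-t^2}=1$ together with the two generating functions already at our disposal. From Definition~\ref{def:herm-prob-gen} we have
\[
G(x,t)=\sum_{n=0}^{\infty}He_n(x)\frac{t^n}{n!}=e^{xt-t^2/2},
\]
while the moment generating function of $Y\stackrel{d}{=}\mathcal{N}(x,1)$, recalled just before the theorem, is
\[
m_Y(t)=\sum_{n=0}^{\infty}{\bf E}[Y^n](x)\,\frac{t^n}{n!}=e^{xt+t^2/2}.
\]
These two series differ only by the sign of the $t^2/2$ term, so they are linked by multiplication by $e^{\pm t^2}$: explicitly $m_Y(t)=G(x,t)\,e^{t^2}$ and equivalently $G(x,t)=m_Y(t)\,e^{-t^2}$.

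To establish \eqref{eq:gauss-her-conn-inverse}, I would start from the identity $m_Y(t)=G(x,t)\,e^{t^2}$, expand the right-hand factor as $e^{t^2}=\sum_{j\geq 0}t^{2j}/j!$, apply the Cauchy product, and match coefficients of $t^n$ on both sides. Pairs $(k,j)$ contributing to $t^n$ satisfy $k+2j=n$, so $k=n-2j$ with $0\leq j\leq \lfloor n/2\rfloor$; multiplying through by $n!$ yields exactly \eqref{eq:gauss-her-conn-inverse}. For \eqref{eq:gauss-her-conn}, the dual identity $G(x,t)=m_Y(t)\,e^{-t^2}$ together with the expansion $e^{-t^2}=\sum_{j\geq 0}(-1)^j t^{2j}/j!$ runs through the identical Cauchy-product computation, producing the sign factor $(-1)^j$ in the final formula.

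There is essentially no substantive obstacle: the argument is two back-to-back applications of the Cauchy product, and the only care required is the index bookkeeping guaranteeing that $\lfloor n/2\rfloor$ appears as the upper summation limit. As a self-consistency check, substituting \eqref{eq:gauss-her-conn-inverse} into \eqref{eq:gauss-her-conn} collapses, after reindexing by $m=j+k$, to the binomial identity $\sum_{j=0}^{m}(-1)^j\binom{m}{j}=[m=0]$, which is precisely the coefficient-level statement of $e^{t^2}\cdot e^{-t^2}=1$; this confirms that the two displayed formulas are genuine mutual inverses.
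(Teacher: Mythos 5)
Your proposal is correct, and it reaches both formulas by a route that is cleaner and more symmetric than the one in the paper. The identities $m_Y(t)=G(x,t)\,e^{t^2}$ and $G(x,t)=m_Y(t)\,e^{-t^2}$ are exactly right, the series involved are entire in $t$ so the Cauchy products are legitimate, and the index bookkeeping $k=n-2j$, $0\leq j\leq\lfloor n/2\rfloor$ delivers \eqref{eq:gauss-her-conn} and \eqref{eq:gauss-her-conn-inverse} directly. For the first formula the paper does essentially the same computation, but in "derivative at zero" form: it applies the Leibniz rule to $\frac{d^n}{dt^n}[e^{-t^2}m_Y(t)]\big\vert_{t=0}$ and then invokes the Rodrigues formula for $H_k(t)$ together with the values $H_{2m}(0)$ just to recover the Taylor coefficients of $e^{-t^2}$ --- which you simply write down as $(-1)^j/j!$. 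For the second formula the two arguments genuinely diverge: the paper refuses to repeat the generating-function computation and instead observes that the connection coefficients in \eqref{eq:gauss-her-conn} coincide with those of the known expansion \eqref{eq:her-phy-expl} of $H_n$ in powers of $2x$, builds the two change-of-basis matrices, and appeals to uniqueness of the matrix inverse to import the inverse coefficients from \eqref{eq:her-phys-inverse}. Your single Cauchy-product mechanism handles both directions at once and makes the mutual inversion transparent (your closing check that it reduces to the coefficient-level statement of $e^{t^2}e^{-t^2}=1$ is exactly the right sanity test); what the paper's longer detour buys is a reusable observation --- that two connection problems with identical coefficients have identical inverses --- which it then recycles in Exercise \ref{ex:gauss-hermite-conv-2}.
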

\begin{proof}
The main difficulty of this problem is that $\{{\bf E}[Y^{n}]({x})\}_{n=0}^{\infty} $ has no orthogonality relation, hence the technique of Excercise \ref{ex:her-prop-rodrigues} is not applicable to find $He_n(x)$ as an expansion of the raw moment poynomials, for this reason we use a different approach that mainly explotes the relation of the generating functions $m_Y(t)$ and \eqref{eq:herm-prob-gen}.

We prove first \eqref{eq:gauss-her-conn}, for this let's consider the Chebyshev-Hermite polynomials constructed by their generating function \eqref{eq:herm-prob-gen},
$$He_n(x)=\left. \frac{d^n}{dt^n}\left[e^{xt-\frac{t^2}{2}}\right]\right\vert_{t=0}=\left. \frac{d^n}{dt^n}\left[e^{-t^2}m_Y(t)\right]\right\vert_{t=0},$$
where we recognize $m_Y(t)=e^{xt+\frac{t^2}{2}}$ as the moment generating function of $Y\stackrel{d}{=}\mathcal{N}(x,1)$. Now recalling the result known as  \iftex Leibniz Formula \fi \ifblog \href{https://en.wikipedia.org/wiki/General_Leibniz_rule}{Leibniz Formula} \fi \cite{nist-sp-fun},
$$\frac{d^n}{dt^n}\left( fg\right)=\sum_{k=0}^n {{n}\choose{k}} \frac{d^k}{dt^k}\left( f\right)\cdot \frac{d^{n-k}}{dt^{n-k}}\left( g\right).$$
We apply it to the previous equation,
$$He_n(x)=\left.\sum_{k=0}^{n} {{n}\choose{k}} \frac{d^{k}}{dt^{k}}\left( e^{-t^2}\right) \frac{d^{n-k}}{dt^{n-k}}\left( m_Y(t)\right) \right\vert_{t=0}.$$
Now using the Rodrigues formula of $H_k(t)=(-1)^ke^{t^2}\frac{d^{k}}{dt^{k}}\left( e^{-t^2}\right)$, that can be derived along the same lines of Section \ref{sesu:Rodrigues} using the generating function $\hat{G}(t,z)=e^{2tz-z^2}$, and also using the fundamental property $\left. \frac{d^{k}}{dt^{k}} m_Y(t)\right\vert_{t=0}={\bf E}[Y^{k}](x)$,
$$He_n(x)=\sum_{k=0}^{n}\left. {{n}\choose{k}}  (-1)^{k}e^{-t^2}H_k(t)\right\vert_{t=0} {\bf E}[Y^{n-k}](x)=\sum_{k=0}^{n} {{n}\choose{k}}  (-1)^{k}\left.H_k(t)\right\vert_{t=0} {\bf E}[Y^{n-k}](x).$$
It can be seen from \eqref{eq:her-phy-expl} that $H_{2m}(0)=\frac{(-1)^m(2m)!}{m!}$ and $H_{2m+1}(0)=0$, hence making $k=2j$ in the previous equation,
$$He_n(x) =  \sum_{j=0}^{\left\lfloor{\frac{n}{2}}\right\rfloor } {{n}\choose{2j}}  (-1)^{2j} \frac{(-1)^{j}(2j)!}{j!} {\bf E}[Y^{n-2j}](x).$$
Now manipulating the coefficients in the sum,
\begin{eqnarray*}
He_n(x) &=&  \sum_{j=0}^{\left\lfloor{\frac{n}{2}}\right\rfloor} \frac{n!}{(n-2j)!(2j)!}(-1)^{j} \frac{(2j)!}{j!}  {\bf E}[Y^{n-2j}](x),\\
&=& n! \sum_{j=0}^{\left\lfloor{\frac{n}{2}}\right\rfloor}  \frac{(-1)^{j}{\bf E}[Y^{n-2j}](x)}{j!(n-2j)!}.  \\
\end{eqnarray*}
which is precisely the connection formula \eqref{eq:gauss-her-conn}.

For \eqref{eq:gauss-her-conn-inverse} we could expand ${\bf E}[Y^{n}](x)$ as a linear combination of Chebyshev-Hermite polynomials and compute the coefficients with the orthogonality relation \eqref{eq:her-high-prob-orth}, however there is a more insightful and quicker way to obtain it, for this recall the explicit expression of the Hermite polynomials \eqref{eq:her-phy-expl} and the formula we just proved \eqref{eq:gauss-her-conn},

$$H_n(x) = n! \sum_{j=0}^{\left\lfloor{\frac{n}{2}}\right\rfloor}\frac{(-1)^{j} (2x)^{n-2j} }{(n-2j)!j!},\qquad He_n(x) = n! \sum_{j=0}^{\left\lfloor{\frac{n}{2}}\right\rfloor}  \frac{(-1)^{j}{\bf E}[Y^{n-2j}](x)}{j!(n-2j)!}.$$
Note that the coefficients of the connection problem between $\{H_n(x)\}_{n=0}^{\infty}$ and $\{(2x)^n\}_{n=0}^{\infty}$ are the same that of the connection problem between $\{ He_n({x})\}_{n=0}^{\infty}$ and $\{{\bf E}[Y^{n}]({x})\}_{n=0}^{\infty} $. 

So we guess that,
$${\bf E}[Y^{n}]({x}) \overset{?}{=} n! \sum_{j=0}^{\left\lfloor{\frac{n}{2}}\right\rfloor} \frac{He_{n-2j}(x)}{(n-2j)!j!},$$
as with the coefficients of the inverse explicit expression \eqref{eq:her-phys-inverse} for the connection problem of $\{{\bf E}[Y^{n}]({x})\}_{n=0}^{\infty} $ and $\{ He_n({x})\}_{n=0}^{\infty}$.

Indeed this turns out to be the case, to see this we introduce the following notation: Let $p_n(x)=\sum_{i=0}^n a_iH_i(x)$, this polynomial can also be written as \iftex \linebreak\fi $p_n(x)=\sum_{i=0}^n b_i(2x)^i$, where in the first equation the polynomial is written with the ordered base $\{H_i(x)\}_{i=0}^{n}$ and the second with $\{(2x)^i\}_{i=0}^n$. We write the coefficients of $p_n$ with respect	 to each base as the column vectors of size $n+1$,
$$\begin{bmatrix}p_n(x)\end{bmatrix}_H=\begin{bmatrix}a_i\end{bmatrix}_H^T, \qquad \begin{bmatrix}p_n(x)\end{bmatrix}_{2x}=\begin{bmatrix}b_i\end{bmatrix}_{2x}^T, $$
where we emphasize on the fact that the index $i$ stars from zero. Now we want to construct a \iftex change of basis \fi \ifblog \href{https://en.wikipedia.org/wiki/Change_of_basis}{change of basis} \fi  matrix $\boldsymbol{P}_{2x\leftarrow H}$ using equation \eqref{eq:her-phy-expl}, for this fix a $n$ and note that,
\[\begin{bmatrix}H_k(x)\end{bmatrix}_H=
\begin{bmatrix}
0 \\
0 \\
\vdots\\
1\\
\vdots\\
0\\0
\end{bmatrix}_H\begin{matrix}
i=0\phantom{--}\\
i=1\phantom{--}\\
{\vdots}\phantom{--}\\
i=k\phantom{--}\\
{\vdots}\phantom{--}\\
i=n-1\\
i=n\phantom{--}
\end{matrix},\qquad \begin{bmatrix}(2x)^k\end{bmatrix}_{2x}=
\begin{bmatrix}
0 \\
0 \\
\vdots\\
1\\
\vdots\\
0\\0
\end{bmatrix}_{2x}\begin{matrix}
i=0\phantom{--}\\
i=1\phantom{--}\\
{\vdots}\phantom{--}\\
i=k\phantom{--}\\
{\vdots}\phantom{--}\\
i=n-1\\
i=n\phantom{--}
\end{matrix}.
 \]
 With this notation we can write \eqref{eq:her-phy-expl} as,
\begin{equation}\label{eq:coef-vec-c}
\begin{bmatrix}H_k(x)\end{bmatrix}_H=\sum_{i=0}^{n}c_i^{n,k}\begin{bmatrix}(2x)^i\end{bmatrix}_{2x}, \quad \text{where }c_{i}^{n,k}=\begin{cases}\vspace{-10pt}\frac{k!(-1)^{\frac{k-i}{2}}}{(i)!\left(\frac{k-i}{2}\right)!}& \qquad \text{if } i\leq k\text{ and either }  \\ 
& i\text{ and }k \text{ even } \text{or } i\text{ and }k \text{ odd,} \\
0 & \qquad \text{otherwise.} 
\end{cases}
\end{equation}
Now if we set $\boldsymbol{c}^{n,k}=\begin{bmatrix}c_i^{n,k}\end{bmatrix}^T$, we can build the change of base matrix as,
\begin{equation}\label{eq:matrix-hto2x-def}
\boldsymbol{P}_{2x\leftarrow H}=\Bigg[\boldsymbol{c}^{n,1},\boldsymbol{c}^{n,2},\ldots,\boldsymbol{c}^{n,n}\Bigg],
\end{equation}
where its evident from \eqref{eq:coef-vec-c} that the matrix is upper triangular and invertible.

Therefore using this matrix, for any $p_n(x)\in \mathbb{P}_n[x]$ we have,
\begin{equation}\label{eq:matrix-hto2x}
\begin{bmatrix}p_n(x)\end{bmatrix}_{2x}=\boldsymbol{P}_{2x\leftarrow H}\begin{bmatrix}p_n(x)\end{bmatrix}_H.
\end{equation}
Clearly $\boldsymbol{P}_{H\leftarrow 2x}$ can be constructed similarly as \eqref{eq:coef-vec-c} with \eqref{eq:her-phys-inverse}, obtaining,
\begin{equation}\label{eq:coef-vec-d}
\boldsymbol{P}_{H\leftarrow 2x}=\Bigg[\boldsymbol{d}^{n,1},\boldsymbol{d}^{n,2},\ldots,\boldsymbol{d}^{n,n}\Bigg],\quad \text{where }d_{i}^{n,k}=\begin{cases}\vspace{-10pt}\frac{k!}{(i)!\left(\frac{k-i}{2}\right)!}& \qquad \text{if } i\leq k\text{ and either }  \\ 
& i\text{ and }k \text{ even } \text{or } i\text{ and }k \text{ odd,} \\
0 & \qquad \text{otherwise.}  
\end{cases}
\end{equation}
Furthermore using this matrix, for any $p_n(x)\in \mathbb{P}_n[x]$ we have,
\begin{equation}\label{eq:matrix-2xtoh}
\begin{bmatrix}p_n(x)\end{bmatrix}_{H}=\boldsymbol{P}_{H\leftarrow 2x}\begin{bmatrix}p_n(x)\end{bmatrix}_{2x}.
\end{equation}
Observing \eqref{eq:matrix-2xtoh} and \eqref{eq:matrix-hto2x} we conclude,
\begin{equation}\label{eq:matrix-change-inverse}
\boldsymbol{P}_{H\leftarrow 2x} = \Big(\boldsymbol{P}_{2x\leftarrow H}\Big)^{-1},
\end{equation}
as is well known from the theory of vector spaces\cite{goode}.

Now focusing specifically on the connection problem of  $\{{\bf E}[Y^{n}]({x})\}_{n=0}^{\infty} $ and $\{ He_n({x})\}_{n=0}^{\infty}$,  let $p_n(x)=\sum_{i=0}^n \alpha_iHe_i(x)$, this polynomial can also be written as $p_n(x)=\sum_{i=0}^n \beta_i{\bf E}[Y^{i}]({x})$, where in the first equation the polynomial is written with the ordered base $\{He_i(x)\}_{i=0}^{n}$ and the second with $\{{\bf E}[Y^{n}]({x})\}_{i=0}^n$. We write the coefficients of $p_n$ with respect	 to each base as the column vectors of size $n+1$,
$$\begin{bmatrix}p_n(x)\end{bmatrix}_{He}=\begin{bmatrix}\alpha_i\end{bmatrix}_{He}^T, \qquad \begin{bmatrix}p_n(x)\end{bmatrix}_{{\bf E}[Y]}=\begin{bmatrix}\beta_i\end{bmatrix}_{{\bf E}[Y]}^T. $$
Similarly as we did for $\boldsymbol{P}_{2x\leftarrow H}$, using \eqref{eq:gauss-her-conn} fixing a $n$ we find the change of basis matrix $\boldsymbol{P}_{{\bf E}[Y]\leftarrow He}$ is,
\begin{equation}\label{eq:matrix-hetogauss-def}
\boldsymbol{P}_{{\bf E}[Y]\leftarrow He}=\Bigg[\boldsymbol{c}^{n,1},\boldsymbol{c}^{n,2},\ldots,\boldsymbol{c}^{n,n}\Bigg]=\boldsymbol{P}_{2x\leftarrow H},
\end{equation}
where  the column vectors $\boldsymbol{c}^{n,k}$ are the same as in \eqref{eq:matrix-hto2x-def}, since the coefficients of both connection problems, \eqref{eq:gauss-her-conn} and \eqref{eq:her-phy-expl} are the same.

In addition using this matrix, for any $p_n(x)\in \mathbb{P}_n[x]$ we have,
\begin{equation}\label{eq:matrix-hetogauss}
\begin{bmatrix}p_n(x)\end{bmatrix}_{{\bf E}[Y]}=\boldsymbol{P}_{{\bf E}[Y]\leftarrow He}\begin{bmatrix}p_n(x)\end{bmatrix}_{He}.
\end{equation}
Using \eqref{eq:matrix-change-inverse}, \eqref{eq:matrix-hetogauss-def} and recalling that the inverse of matrix is unique,
$$\boldsymbol{P}_{H\leftarrow 2x} = \Big(\boldsymbol{P}_{2x\leftarrow H}\Big)^{-1}=\Big(\boldsymbol{P}_{{\bf E}[Y]\leftarrow He}\Big)^{-1}=\boldsymbol{P}_{He\leftarrow {\bf E}[Y]}.$$
Hence we conclude,
\begin{equation}
\boldsymbol{P}_{He\leftarrow {\bf E}[Y]}=\Bigg[\boldsymbol{d}^{n,1},\boldsymbol{d}^{n,2},\ldots,\boldsymbol{d}^{n,n}\Bigg],
\end{equation}
where  the column vectors $\boldsymbol{d}^{n,k}$ are the same as in \eqref{eq:coef-vec-d}. Now from the construction of each column in the change of basis matrix $\boldsymbol{P}_{He\leftarrow {\bf E}[Y]}$ we have,
$$\begin{bmatrix}{\bf E}[Y^k](x)\end{bmatrix}_{{\bf E}[Y]}=\sum_{i=0}^{n}d_i^{n,k}\begin{bmatrix}He_i(x)\end{bmatrix}_{He}  \Longleftrightarrow  {\bf E}[Y^k](x) = \sum_{i=0}^n d_i^{n,k}He_i(x).$$
Writting the coefficients from \eqref{eq:coef-vec-d} compactly we find, 
$${\bf E}[Y^{k}]({x}) = k! \sum_{j=0}^{\left\lfloor{\frac{k}{2}}\right\rfloor} \frac{He_{k-2j}(x)}{(k-2j)!j!},$$
which is precisely \eqref{eq:gauss-her-conn-inverse} with $n\rightarrow k$.
\end{proof}

Now a little exercise for the reader to apply the method that helps prove the second part of Theorem \ref{th:her-conn-pro}, where we use coefficient comparison with an already stablished connection problem.

\begin{exercise} \label{ex:gauss-hermite-conv-2}
Show that the moments ${\bf E}[He_n(Y)]$ of  a random variable $Y\stackrel{d}{=}\mathcal{N}(x,1)$ with normal distribution are given by,
\begin{equation}\label{eq:moments-hermite}
{\bf E}[He_n(Y)](x)= n! \sum_{j=0}^{\left\lfloor{\frac{n}{2}}\right\rfloor}\frac{(-1)^{j} {\bf E}\left[Y^{n-2j}\right](x) }{2^j (n-2j)!j!} = x^n.
\end{equation}
Suggestion: Write the expected value using the explicit expression \eqref{eq:her-prob-expl} and use the linearity of the expectated value. Next use the fact that the coefficients of the connection problem between $\{He_n(x)\}_{n=0}^\infty$ and $\{x^n\}_{n=0}^\infty$ from \eqref{eq:her-prob-expl} and \eqref{eq:her-prob-inverse} are the same that of the connection problem between $\{{\bf E}\left[Y^{n}\right](x)\}_{n=0}^\infty$ and $\{x^n\}_{n=0}^\infty$  from \eqref{eq:moment-gauss-expl} with $\sigma=1$ and $\mu=x$ , hence obtaining the same result as in \eqref{eq:herm-convolution}.

\end{exercise}

\subsection{Representation of  densities and functions via Hermite polynomials}

In this section with the aid of the Chebyshev-Hermite polynomials, we describe two ways in which one can represent an arbitrary density or function of random variables as a combination of simpler densities or simpler functions of random variables.

\subsubsection{Fourier-Hermite expansion}\label{sesusu:Fourier-Hermite}

Consider an arbitrary probability density $f(x)\in L^2(\mathbb{R})$, we could expand this function in a series of Chebyshev-Hermite functions \eqref{eq:chebyshev-hermite-functions}, however we folow \cite{singer} and take a related approach\footnote{ The expansion  \eqref{eq:moment-expansion} can be written in terms of the Chebyshev-Hermite functions \eqref{eq:chebyshev-hermite-functions} noting that $w(x)^{\frac{1}{2}}He_n(x)=he_n(x)$. Therefore the function $g(x)=w(x)^{\frac{-1}{2}}f(x)$  can be used to write $f(x)$ since its expansion is $g(x)=\sum_{n=0}^{\infty} a_n he_n(x)$, thus the completeness of the expansion of the Chebyshev-Hermite functions justifies the expansion \eqref{eq:moment-expansion}. \emo}. We expand $f(x)$ as,
\begin{equation}\label{eq:moment-expansion}
f(x)=w(x)\sum_{n=0}^{\infty} a_n He_n(x)=e^{\frac{-x^2}{2}}\sum_{n=0}^{\infty} a_n He_n(x).
\end{equation}

Using the orthogonality of the Chebyshev-Hermite polynomials \eqref{eq:her-prob-orth}, we obtain the following expression for the coefficients,
\begin{equation}\label{eq:fo-her-coef}
a_n=\frac{1}{\sqrt{2\pi}n!}\int_\infty^\infty He_n(x)f(x)dx=\frac{1}{\sqrt{2\pi}n!}{\bf E}\left[He_n(X)\right],
\end{equation}
where $X$ is a random variable with density $f(x)$. Next we consider a simple example to get familiar with the expansion,
\iftex\vspace{-10pt}\fi
\begin{example}\label{exa:four-her-gauss-std}

Let's suppose that  $X\stackrel{d}{=}\mathcal{N}(\mu,1)$. Correspondingly its density is given by $f(x)=\frac{1}{\sqrt{2\pi}}e^{\frac{-(x-\mu)^2}{2}}$, computing the fourier-hermite coefficients \eqref{eq:fo-her-coef},
\iftex\vspace{-10pt}\fi
$$a_n=\frac{1}{\sqrt{2\pi}n!}{\bf E}\left[He_n(X)\right] = \frac{1}{\sqrt{2\pi}n!} \mu^n,$$
where we used \eqref{eq:moments-hermite}. Now using the expansion \eqref{eq:moment-expansion},
\iftex\vspace{-10pt}\fi
$$f(x)=e^{\frac{-x^2}{2}}\sum_{n=0}^{\infty} \frac{1}{\sqrt{2\pi}n!} \mu^n He_n(x)=\frac{e^{\frac{-x^2}{2}}}{\sqrt{2\pi}}G(x,\mu)=\frac{e^{\frac{-x^2}{2}}}{\sqrt{2\pi}}e^{\mu x-\frac{\mu^2}{2}}=\frac{1}{\sqrt{2\pi}}e^{\frac{-(x-\mu)^2}{2}},$$
\iftex
\vspace{-15pt}\fi

where we recognized the series expansion of the generating function \eqref{def:herm-prob-gen}, thus recovering the original density from the Fourier-Hermite expansion.
\end{example}
\iftex
\vspace{-5pt}
\fi
Back in the general case we can obtain a series in terms of the standardized moments(\iftex cumulants\fi \ifblog \href{https://en.wikipedia.org/wiki/Cumulant}{cumulants}\fi) of $X$. For this consider the standardized variable $Z=(X-\mu)/\sigma$, with ${\bf E}\left[X\right]=\mu$, $\sigma^2={\bf E}\left[X^2\right]-\mu^2$, we have ${\bf E}[Z]=0$,${\bf E}[Z^2]=1$, ${\bf E}[Z^n]:=\nu_n$. Now using the table in Figure \ref{fig:hermite}, \eqref{eq:moment-expansion} and \eqref{eq:fo-her-coef} we obtain the expansion,
\iftex
\vspace{-5pt}
\fi
$$p(z)=\frac{e^{\frac{-z^2}{2}}}{\sqrt{2\pi}}(1+(1/6)\nu_3He_3(z)+(1/24)(\nu_4-3)He_4(z)+\ldots).$$
Recalling that we can write the density of a function of a random variable in terms of the original random variable density (Theorem 2.4 \cite{liliana}), we conclude  $p(x)=\frac{1}{\sigma}p(z)$ with $z=(x-\mu)/\sigma$, this allows us to write the expansion \eqref{eq:moment-expansion} as,
\iftex
\vspace{-5pt}
\fi
\begin{equation}
p(x)=\frac{e^{\frac{-z^2}{2}}}{\sqrt{2\pi}\sigma}(1+(1/6)\nu_3He_3(z)+(1/24)(\nu_4-3)He_4(z)+\ldots),
\end{equation}
commonly known as the \iftex \textit{Gram-Charlier} \fi \ifblog \href{https://www.encyclopediaofmath.org/index.php/Gram-Charlier_series}{Gram-Charlier} \fi expansion in the statistics literature \cite{johnson}. Even though this expansion is frequently used, it \textit{diverges} for many situations of interest. For this reason other expansions like the \textit{Gauss-Hermite} series or the \iftex\textit{Edgeworth}\hspace{-2pt}\fi\ifblog \href{https://en.wikipedia.org/wiki/Edgeworth_series}{Edgeworth} \fi asymptotic series are more convenient to describe nearly Gaussian distributions \cite{blinnikov}.

Now following the treatment of \cite{luo}, we proceed to an interesting analogy of the previous result to functions of random variables. Consider a random variable $Z$ defined as a function $Z=f(Y)$, where $Y\stackrel{d}{=}\mathcal{N}(0,1)$, we expand this random variable similarly as,
\begin{equation}\label{eq:moment-expansion-RV-1}
f(Y)=\sum_{n=0}^{\infty} b_n He_n(Y),
\end{equation}
where the coefficients are given by,
\begin{equation}\label{eq:moment-expansion-RV-2}
b_n=\frac{1}{n!}{\bf E}\left[He_n(Y)f(Y)\right]=\frac{1}{n!}\int_\infty^\infty He_n(y)f(y)\frac{e^{\frac{-y^2}{2}}}{\sqrt{2\pi}}dy.
\end{equation}
Similarly we can generalize the previous result to $d$-dimensions, where we note \iftex \linebreak\fi $\vec{Y}=(Y_1,\ldots,Y_d)$ as a random vector of unit gaussian random variables. Considering now the random vector $\vec{Z}$ defined as the function $\vec{Z}=f\left(\vec{Y}\right)$, and using the $d$-dimensional Chebyshev-Hermite polynomials, Section \ref{sesu:hermite-d-dimensions}, we expand the function,  
\begin{equation}\label{eq:moment-expansion-RV-1-d}
f\left(\vec{Y}\right)=\sum_{n=0}^{\infty} {\bf b}^{(n)}\cdot {\bf He}^{(n)}\left(\vec{Y}\right),
\end{equation}
\begin{equation}\label{eq:moment-expansion-RV-2-d}
{\bf b}^{(n)}=\frac{1}{n!}{\bf E}\left[{\bf He}^{(n)}\left(\vec{Y}\right)f\left(\vec{Y}\right)\right]=\frac{1}{n!}\int_{\mathbb{R}^d} {\bf He}^{(n)}\left(\vec{y}\right)f(\vec{y})\frac{e^{\frac{-\Vert \vec{y}\Vert^2}{2}}}{(2\pi)^{d/2}}d\vec{y},
\end{equation}
where the expansion coefficients ${\bf b}^{(n)}$ are tensors of rank $n$, and the dot product ${\bf b}^{(n)}\cdot {\bf He}^{(n)}$ is defined as the contraction $b^{(n)}_{\alpha_1\cdots  \alpha_n}He^{(n)}_{\alpha_1\cdots  \alpha_n}$.

The expansion \eqref{eq:moment-expansion-RV-2-d} and the coefficients \eqref{eq:moment-expansion-RV-1-d}  constitute a fundamental part of the so-called technique of \iftex \textit{ Wiener Chaos Expansion}(WCE),\fi \ifblog \href{https://en.wikipedia.org/wiki/Polynomial_chaos}{Wiener Chaos Expansion}(WCE), \fi this technique is heavily used to analyze \iftex Stochastic Partial Differential Equations(SPDEs). \fi \ifblog \href{https://en.wikipedia.org/wiki/Stochastic_partial_differential_equation}{Stochastic Partial Differential Equations(SPDEs).} \fi  These equations are relevant when we model complex phenomena with "randomness", e.g. diffusion through heterogenous random media or fluid motion with random forcing.

An example of this type of equations is the stochastic difussion equation,
\begin{equation}
\frac{\partial u}{\partial t}=\Delta u +  \dot{W}(t),
\end{equation} 
where $W(t)$ is a \iftex Brownian motion, \fi \ifblog \href{https://en.wikipedia.org/wiki/Brownian_motion#Mathematics}{Brownian motion,} \fi  and the derivative of the Brownian motion $\dot{W}(t)$ is the so-called \iftex \textit{continuous-time white noise}. \fi \ifblog \href{https://en.wikipedia.org/wiki/White_noise#Continuous-time_white_noise}{continuous-time white noise.}\fi

In this problem we see that $u$ not only depends on $(x,t)$, but also the brownian motion path, since we consider a time dependent white noise, $u$ is a functional of $\{W(s),0\leq s \leq t\}$, that is all the possible Brownian motions up to time $t$.

It turns out that since for a fixed $t>0$, a Brownian motion path $\{W(s),0\leq s \leq t\}$ \iftex  \linebreak \fi can be decomposed as a linear combination of infinite unit gaussian random variables,  $W(s)=W(s;Y_1,Y_2,\ldots)$, then we can write the solution as,
\begin{equation}
u(x,t)=U(x,t;Y_1,Y_2,\ldots).
\end{equation}
In a similar way as the multidimensional Fourier-Hermite expansion \eqref{eq:moment-expansion-RV-2-d}, we can expand the solution $u$ using a family of polynomials derived from the Chebyshev-Hermite polynomials, called the \iftex \textit{Wick} polynomials.\fi \ifblog \href{https://en.wikipedia.org/wiki/Wick_product}{Wick  polynomials. } \fi The previous construction roughly conforms the mentioned  \iftex \textit{ Wiener Chaos Expansion}(WCE),\fi \ifblog \href{https://en.wikipedia.org/wiki/Polynomial_chaos}{Wiener Chaos Expansion}(WCE), \fi for more details see \cite{luo}.

The power of the WCE is that one represents the randomness by a set of random bases with deterministic coefficients, thus obtaining a spectral expression in the probabilistic space. This expansion can also be used to solve \textit{numerically} SPDEs, where the complete expansion is truncated to compute an approximation. For an example of this procedure using the \iftex Finite Element Method(FEM) \fi \ifblog \href{https://en.wikipedia.org/wiki/Finite_element_method#Technical_discussion}{Finite Element Method(FEM)} \fi see \cite{galvis}.
\subsubsection{A mixture of Gaussian distributions}

Instead of considering a density as a series of Chebyshev-Hermite functions as we did in the previous section, we follow \cite{jaynes} and consider the subsequent problem,
\begin{question*}
Is a non-Gaussian distribution explainable as a mixture of Gaussian ones ? 
\end{question*}
This problem can be attacked in a few ways and it is solvable to some extent in each of them depending on how the statement is interpreted. 

If we consider the problem of estimating a density function using a set of initial data points sampled from a process with such density, we can estimate the density as a sum of Gaussian distributions. This constitutes the usual method of \iftex Kernel density estimation \fi \ifblog\href{https://en.wikipedia.org/wiki/Kernel_density_estimation}{Kernel density estimation}\fi (KDE) \cite{silverman}, this is a well known numerical technique that has been extensively studied and with its aid one can solve to a certain degree the aforementioned question, for more details see \cite{wand},\cite{botev},\cite{keith}.

On the other hand in this section we approach the problem in a different way, assume that we have a well defined density $g(y)$, and we want to see if this density can be written as a mixture of Gaussians in the following sense,
\begin{equation}\label{eq:density-conv}
g(y) = (\phi_\sigma * f)(y) = \int_{\mathbb{R}}f(x)\frac{e^{\frac{-(x-y)^2}{2\sigma^2}}}{\sqrt{2\pi}\sigma}dx,
\end{equation}
where $\phi_\sigma(y)$ is a Gaussian density with mean zero and variance $\sigma^2$.

In other words we have interpreted the question as, 
\begin{question**}
Can we find a mixing function $f(x)$ such that its convolution with $\phi_\sigma(y)$ \eqref{eq:density-conv}, yields the density $g(y)$ ?
\end{question**}
We can see that by taking $f(x)=\sum_{i=1}^N\delta(y-y_i)$, \eqref{eq:density-conv} becomes a linear combination of $N$ gaussians, each centered in $y_i$. This superposition is the usual form that the  method of KDE assumes(supposing that $\{y_i\}_{i=1}^N$ comes from a dataset)\cite{silverman}. However we require more information of $g(y)$ to recover it completly. To obtain a closed form answer of the solution to the reformulated question we can use the Chebyshev-Hermite polynomials, first assume $g(y)$ has the following power series representation,
$$g(y)=\sum_{n=0}^\infty a_ny^n.$$
Using the convolution property of the Chebyshev-Hermite polynomials \eqref{eq:herm-convolution}, we obtain the formal solution,
\begin{equation}
f(x)=\sum_{n=0}^\infty a_n\sigma^nHe_n\left(\frac{x}{\sigma}\right).
\end{equation}
We can check that this is indeed the case replacing the previous expresion in the Gaussian convolution \eqref{eq:density-conv},
\begin{eqnarray*}
(\phi_\sigma * f)(y) &=& \int_{\mathbb{R}}\sum_{n=0}^\infty a_n\sigma^nHe_n\left(\frac{x}{\sigma}\right)\frac{e^{\frac{-(x-y)^2}{2\sigma^2}}}{\sqrt{2\pi}\sigma}dx,\\
&=& \sum_{n=0}^\infty a_n \sigma^n \int_{\mathbb{R}} He_n\left(\frac{x}{\sigma}\right)\frac{e^{\frac{-\left(\frac{x}{\sigma}-\frac{y}{\sigma}\right)^2}{2}}}{\sqrt{2\pi}}d\left(\frac{x}{\sigma}\right),\\
&=& \sum_{n=0}^\infty a_n \sigma^n \left(\frac{y}{\sigma}\right)^n,\\
&=& g(y),
\end{eqnarray*}
where we use the convolution property from line two to three of the previous manipulation. We can write this solution in a little more insightful way if we use the explicit expression \eqref{eq:her-prob-expl}, recall then that,
$$He_n\left(\frac{x}{\sigma}\right) = n! \sum_{j=0}^{\left\lfloor{\frac{n}{2}}\right\rfloor}\frac{(-1)^{j}}{2^j (n-2j)!j!}\left(\frac{x}{\sigma}\right)^{n-2j}.$$
Now using the identity,
$$\frac{n!}{(n-2j)!}\left(\frac{x}{\sigma}\right)^{n-2j}=\sigma^{2j-n} \frac{d^{2j}}{dx^{2j}}\left(x^n\right),$$
we obtain the expansion,
\begin{equation}
f(x)=\sum_{j=0}^\infty \frac{(-1)^{j}\sigma^{2j}}{2^j j!}\frac{d^{2j}}{dx^{2j}}g(x)=g(x)-\frac{\sigma^2}{2}\frac{d^{2}}{dx^{2}}g(x)+\frac{\sigma^4}{8}\frac{d^{4}}{dx^{4}}g(x)-\cdots.
\end{equation}
From this expansion we see that the size of the variance has an important impact on the mixture function, similar to the effect of the \iftex \textit{bandwidth} \fi \ifblog\href{https://en.wikipedia.org/wiki/Kernel_density_estimation#Bandwidth_selection}{bandwidth}\fi parameter on the KDE. Additionaly we observe that the higher the order of the expansion, the more we are capturing the details of $g(y)$, to see this let's introduce the operator $D:=\frac{d}{dx}$, then the previous equation becomes,
\ifkeith
\reversemarginpar
\marginnote{\footnotesize We see this is exactly the same operator of the \href{https://en.wikipedia.org/wiki/Hermite_polynomials\#Generalizations}{generalized Hermite polynomials} from \cite{roman} \tiny$He_n^{\gamma}(x)=e^{\frac{-\gamma D^2}{2}}x^n$\footnotesize. Thus the function $f(x)$ can be written as \tiny$f(x)=\sum_{n=0}^\infty a_nHe_n^{\sigma^2}\left(x\right)$\footnotesize .}
\vspace{-5pt}
\normalmarginpar
\marginnote{\footnotesize Use the previous procedure to prove that \tiny$He_n(x)=e^{\frac{-D^2}{2}}x^n$\footnotesize, then use this result to prove the addition theorem for Hermite Pol.}
\fi
\begin{equation}
f(x)=e^{\frac{-\sigma^2 D^2}{2}}g(x)=\sum_{j=0}^\infty \left(\frac{-\sigma^2}{2}\right)^j\frac{1}{j!}\frac{d^{2j}g(x)}{dx^{2j}},
\end{equation}

which is the inverse of a \iftex generalized \fi \ifblog\href{https://en.wikipedia.org/wiki/Hermite_polynomials\#Generalizations}{generalized}\fi \iftex Weierstrass-Gauss transform \fi \ifblog\href{https://rmarcus.info/blog/2016/09/09/weierstrass-transform.html}{Weierstrass-Gauss transform}\fi \cite{roman}. We can check that  \eqref{eq:density-conv} coincides with the  Weierstrass-Gauss transform\cite{zayed} for $\sigma = \sqrt{2}$, hence a parallel with image processing arise since the  Weierstrass-Gauss transform is used as a low-pass  \iftex gaussian filter \fi \ifblog\href{https://en.wikipedia.org/wiki/Gaussian_blur}{gaussian filter}\fi to blur images \cite{gonzalez}. Thereby the higher the size of the pass $\sigma$, the more we are capturing the details of $g(y)$. The appearance of the blurring and of course the Gaussian distributions all over the place, hints to a possible \iftex relation \fi \ifblog \href{http://wwwf.imperial.ac.uk/~pavl/lec_fokker_planck.pdf}{relation}\fi to the \iftex Heat \fi \ifblog \href{https://en.wikipedia.org/wiki/Heat_equation}{Heat}\fi \iftex Diffusion \fi \ifblog\href{https://en.wikipedia.org/wiki/Diffusion_equation}{Diffusion}\fi  equation or more generally the \iftex Focker-Planck-Kolmogorov(FPK) \fi \ifblog \href{https://en.wikipedia.org/wiki/Fokker\%E2\%80\%93Planck_equation}{Focker-Planck-Kolmogorov(FPK)}\fi equation, for a treatment on this matter see \cite{pavliotis},\cite{risken}.

Finally, we see that if $g(y)$ is discontinuous, the manipulation we just did is not valid, for a discussion on this matter and the nature of the general problem with relation to its mathematical and probabilistic character see \cite{jaynes}.

\subsection{Combinatorial properties of a Simple Graph}

This section is merely a desire to share our appreciation of the unexpected connections between different mathematical fields. \textit{Who would have thought that orthogonal polynomials have a relation with Graph Theory?} so when we discovered this connection we had to put it in our manuscript.

This connection allows us to solve the so-called linearization problem of Chebyshev-Hermite polynomials and also allows us to evaluate integrals of products of these polynomials. To expose these ideas we follow the presentation of \cite{spe-func}, but first, we introduce a non-exhaustive background of the necessary tools from Graph Theory.

\subsubsection{Background on Graph theory}

We start from the very beginning and  define a \iftex graph \fi \ifblog\href{https://en.wikipedia.org/wiki/Graph_(discrete_mathematics)#Graph}{graph}\fi with a set-theoretical approach,
\iftex
\vspace{-15pt}
\fi
\begin{definition}\label{def:graph-set-def}
A \textbf{Graph} $G$ is an ordered pair $(V,E)$, where $V$ is the set of vertices and $E$  is the set of edges, formed by pair of vertices. The vertex set is a finite non-empty set. The edge set can be empty, but aside from this case, its elements are two-element subsets of the vertex set.
\end{definition}

We denote the size of $V$, the number of vertices as $\vert v\vert$ and the size of $E$, the number of edges as $\vert e \vert$. Next, we stress a little on the set definition of graphs and  complement it following our "intuition" with the graphical representation. After this short "formal" treatment, we rely the rest of this section mostly on the graphical representation.
\iftex
\newpage
\fi
\begin{example}\label{exa:elem-exa-graph}
 The following ordered pairs are some examples of graphs,
\iftex
\vspace{-5pt}
\fi
\begin{itemize}
\item $G_1=(V_1,E_1)$, with $V_1=\{v_1,v_2,v_3,v_4,v_5,v_6\}$, $E_1=\{\{v_2,v_3\},\{v_3,v_4\},\{v_5,v_6\}\}$.
\item $G_2=(V_2,E_2)$, with $V_2=\{v_1,v_2,v_3,v_4,v_5,v_6,v_7\}$,  $E_2=\{\{v_1,v_2\},\{v_1,v_3\},\{v_2,v_4\},$ $\{v_2,v_5\},\{v_3,v_6\},\{v_3,v_7\}\}$ is a \iftex \textbf{tree}. \fi \ifblog\href{https://en.wikipedia.org/wiki/Tree_(graph_theory)}{tree.}\fi
\item $C_4=(V_3,E_3)$, with $V_3=\{v_1,v_2,v_3,v_4\}$, $E_3=\{\{v_1,v_2\},\{v_2,v_3\},\{v_3,v_4\},\{v_4,v_1\}\}$ is called a \iftex \textbf{cyclic graph} \fi \ifblog\href{https://en.wikipedia.org/wiki/Cycle_graph}{cyclic graph} \fi with four vertices.
\item $K_4=(V_4,E_4)$, with $V_4=\{v_1,v_2,v_3,v_4\}$, $E_4=\{\{v_1,v_2\},\{v_1,v_3\},\{v_1,v_4\},\{v_2,v_3\},$ $\{v_2,v_4\},\{v_3,v_4\}\}$ is called a \iftex \textbf{complete graph} \fi \ifblog\href{https://en.wikipedia.org/wiki/Complete_graph}{complete graph} \fi with four vertices.
\end{itemize}
\end{example}
\iftex
\vspace{-10pt}
\fi
We can represent a graph in a visual way, for vertices we draw thick dots and for edges we draw arcs connecting pairs of distinct vertices. For the previous examples, we have the following graphical representations,
\iftex
\vspace{-10pt}
\fi
\ifblog
\begin{figure}[h!]
\image{width = 824}{https://keithpatarroyo.files.wordpress.com/2018/09/graphs_vshort-v21.png}{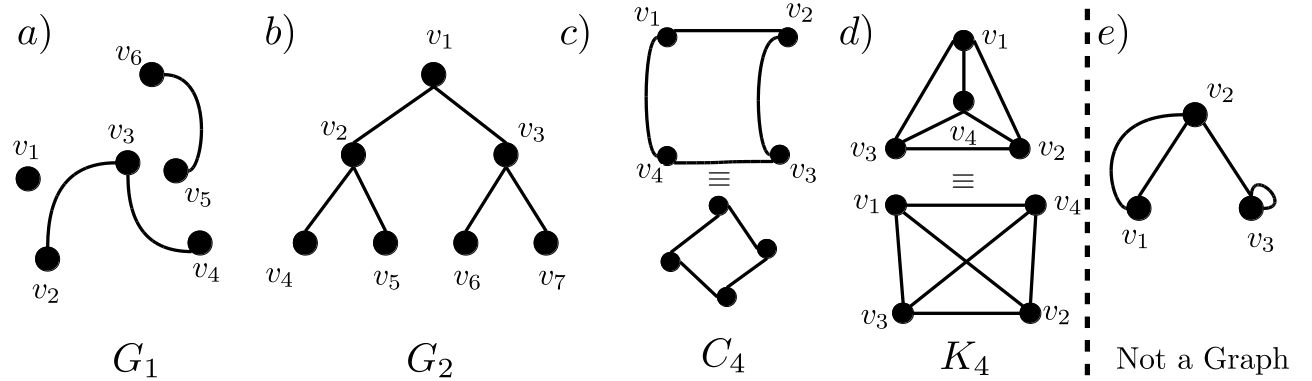}
\captionof{figure}{Examples of valid and invalid graphs. \emo}\label{fig:graphs}
\end{figure}
\fi
\iftex
\begin{figure}[H]
\floatbox[{\capbeside\thisfloatsetup{capbesideposition={left,center},capbesidewidth=1.3cm}}]{figure}[\FBwidth]
{\caption{\color{white}.}\label{fig:graphs}}
{\includegraphics[width=14cm]{Graphs_vshort-v2.png}}
\end{figure}
\fi
\iftex
\vspace{-10pt}
\fi
As we can see from the previous graphs, the positions of the vertices and the shapes of the edges are irrelevant. This is, since the only things that matter are the number of vertices and the pairwise relation between them.
\begin{note}\label{note:graphs}
\iftex
\vspace{-5pt}
\fi
We notice the following both visually and symbolically from the set defintion,
\begin{itemize}
\item We do not allow loops in graphs like in  Figure \ref{fig:graphs}.e), since the set $\{v_3\}$ is not allowed in the set of edges.
\item We do not allow multiple edges like in Figure \ref{fig:graphs}.e), since the element $\{v_1,v_2\}$ cannot be allowed to be twice in the set of edges.
\item Since the elements in the set of edges are sets and not ordered pairs, the edges lack a directionality, and hence the graphs that we consider are \iftex \textbf{un-directed}. \fi \ifblog\href{https://en.wikipedia.org/wiki/Directed_graph}{un-directed.} \fi
\item A graphical representation of a graph can have many labelings, so a specific labeling is not of fundamental importance, we stress on the importance of the \underline{\textbf{pairwise relation}} between vertices, which an unlabeled graph still maintains\footnote{Strictly speaking a change in label represent an \iftex isomorphism \fi\ifblog \href{http://mathworld.wolfram.com/Isomorphism.html}{isomorphism} \fi between two graphs\cite{trudeau}. To some extent, the graphical representation becomes the fundamental object, as if it was a person and a new labeling is a new suit, in most practical matters the suit is irrelevant, since it is always the same person, i.e. it maintains the underlying structure(edge, vertex) unchanged.\emo}, like in Figure \ref{fig:graphs}.c).
\end{itemize}
\end{note}
The graphs that we have described up to this point are usually called \iftex \textit{simple graphs,} \fi \ifblog\href{https://en.wikipedia.org/wiki/Graph_(discrete_mathematics)#Simple_graph}{simple graphs,} \fi from now on we will speak of a graph mainly thinking in its graphical representation and we will  not refer to its set definition, unless it is strictly necessary. Now we follow with a couple more definitions,
\ifkeith
\marginnote{\footnotesize By most definitions a \href{https://en.wikipedia.org/wiki/Path_(graph_theory)}{path} contains no repeated edges and vertices. Hence our definition is really the definition of a \href{https://en.wikipedia.org/wiki/Glossary_of_graph_theory_terms\#trail}{trail}, because by most authors the following red trail do not correspond to a path, \includegraphics[width=2cm]{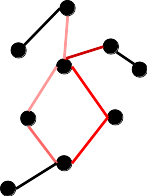}}
\fi
\begin{definition}
A  \iftex \textbf{path} \fi \ifblog\href{https://en.wikipedia.org/wiki/Path_(graph_theory)}{path} \fi is a finite sequence of distinct edges, each of which has a vertex in common with the next. A path is said to be between two vertices or to connect them if the two vertices are associated with the first or the last edge of the path and no other edge in the sequence.
\end{definition}

We clearly see in Figure \ref{fig:graphs}.a) that in $G_1$ there is a path between $v_2$ and $v_4$, but there is no path between $v_3$ and $v_5$.

\begin{definition}
A graph is called a \iftex \textbf{connected} \fi \ifblog\href{https://en.wikipedia.org/wiki/Graph_(discrete_mathematics)#Connected_graph}{connected} \fi graph if for every two vertices in the graph, there is a path that connects them.
\end{definition}

In Figure \ref{fig:graphs} we see that all graphs are connected excepting $G_1$.

\begin{definition}
A connected graph is called a \iftex \textbf{tree} \fi \ifblog\href{https://en.wikipedia.org/wiki/Tree_(graph_theory)}{tree} \fi, if it contains no closed path, where a path is called closed if two different non-consecutive edges in the path sequence have a vertex in common.
\end{definition}

There is only one tree in  Figure \ref{fig:graphs}, as a curious fact, the trees of the form of $G_2$ are of fundamental importance in \textit{coding theory} and \textit{algorithmics}, we refer the interested reader to \cite{yeung} and \cite{brassard} respectively. Next, we continue with a couple more definitions.

\begin{definition}
If $\vert v \vert$ is a positive integer, the \iftex \textbf{complete graph} \fi \ifblog\href{https://en.wikipedia.org/wiki/Complete_graph}{complete graph} \fi $K_{\vert v \vert}$ on $\vert v \vert$ vertices is a graph where every pair of vertices is connected by an edge.
\end{definition}

In Figure \ref{fig:graphs}.c) we have two graphical representations of $K_4$, in the following figure we give a graphical representation of the first five complete graphs,
\ifblog
\begin{figure}[h!]
\image{width = 624}{https://keithpatarroyo.files.wordpress.com/2018/08/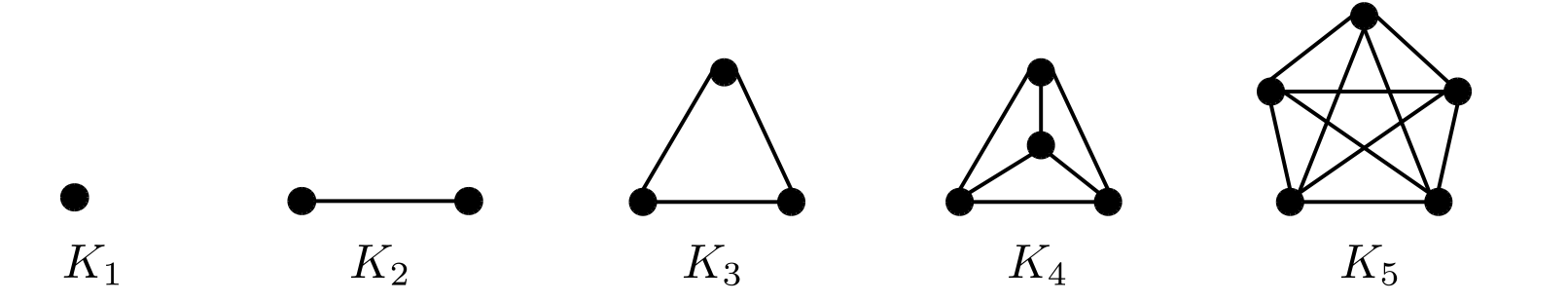}{complete-graphs.png}
\captionof{figure}{First five complete graphs $K_{\vert v \vert}$. \emo}\label{fig:comp-graphs}
\end{figure}
\fi
\iftex
\begin{figure}[H]
\floatbox[{\capbeside\thisfloatsetup{capbesideposition={left,center},capbesidewidth=1.3cm}}]{figure}[\FBwidth]
{\caption{\color{white}.}\label{fig:comp-graphs}}
{\includegraphics[width=14cm]{complete-graphs.png}}
\end{figure}
\fi
Complete graphs are a very important part in the relation between graph theory and the Chebyshev-Hermite polynomials, we explore this later in this section. Also complete graphs have other features, in particular $K_5$ is of fundamental importance  in the theory of non-planar graphs\cite{bona}. Now we define what might be the most important notion of this section.
\ifkeith
\newpage
\marginnote{\footnotesize A analogous treatment can be developed to define an \href{http://mathworld.wolfram.com/IndependentVertexSet.html}{independent vertex set} and  with it define an \href{http://mathworld.wolfram.com/IndependencePolynomial.html}{independence polynomial} respectively.}
\vspace{-20pt}
\fi
\begin{definition}
A  \iftex \textbf{$j$-match }or \textbf{independent edge set of size $j$} \fi \ifblog $j$-\href{http://mathworld.wolfram.com/IndependentEdgeSet.html}{match} or an \textbf{independent edge set of size $j$} \fi is a set of $j$ disjoint edges, where by disjoint edges we mean that the edges have no vertices in common.
\end{definition}
As an example we see in Figure \ref{fig:graphs} that $\{\{v_1,v_2\},\{v_3,v_4\}\}$ is a $2$-match of $C_4$ and  $\{\{v_1,v_3\},\{v_2,v_4\}\}$ is also a $2$-match in $K_4$, in addition there is no $3$-match in any of the graphs from Figures \ref{fig:graphs} and \ref{fig:comp-graphs}.

Now we are almost ready to introduce the polynomial associated with a graph called \iftex \textit{Matching Polynomial}, \fi \ifblog \href{http://mathworld.wolfram.com/IndependentEdgeSet.html}{Matching Polynomial}, \fi this polynomial accounts for the enumerative properties of the graph. We follow then with the final definitions,
\begin{definition}\label{def:k-matches}
Let $G$ be an arbitrary graph, we denote with $\underline{p(G,j)}$ as the total number of $j$-matches in $G$. We take $p(G,0)=1$ and $p(G,-1)=0$.
\end{definition}
\ifkeith
\marginnote{\footnotesize When does $\nu(G)= \lfloor{\frac{\vert v \vert }{2}}\rfloor$? what happens if the graph is cyclic? or a tree? or not connected ?}
\fi
Note that a fixed labeling is neccesary to properly count the total number of $j$-matches in a graph. Also notice that $p(G,1)=\vert e \vert$, in addition $p(G,j)=0$, if $j>\nu(G)$, where $\nu(G)$ is the size of the largest possible $j$-match in a graph $G$, hence  $\nu(G)\leq \lfloor{\frac{\vert v \vert }{2}}\rfloor$.
\begin{definition}
A \iftex \textbf{complete match} \fi \ifblog \href{http://mathworld.wolfram.com/PerfectMatching.html}{complete match} \fi or a \iftex \textbf{perfect match} \fi \ifblog \href{http://mathworld.wolfram.com/PerfectMatching.html}{perfect match} \fi is a $j$-match that uses every vertex of $G$,  the total number of perfect matches is denoted as $p_m(G)$.
\end{definition}

We can see in Figure \ref{fig:graphs} that $C_4$ has $p_m(C_4)=p(C_4,\nu(C_4))=2$, where \iftex \break \fi $\nu(C_4)=\lfloor{\frac{\vert v \vert }{2}}\rfloor=2$. Finally we arrive to the definition of the matching polynomial,

\begin{definition}
The \iftex \textbf{matching polynomial} \fi \ifblog \href{http://mathworld.wolfram.com/IndependentEdgeSet.html}{Matching Polynomial} \fi of  $G$ with $\vert v \vert=m$ is defined by,
\begin{equation}
\mu(G)=\alpha(G)=\alpha(G,x)=\sum_{j=0}^{\nu(G)}(-1)^jp(G,j)x^{m-2j}.
\end{equation}
\end{definition}
With the previous tools at hand we are ready to prove the theorem that connects graph theory with the Chebyshev-Hermite polynomials,
\begin{theorem}
The matching polynomial of a complete graph on $\vert v \vert=m$ vertices is the Chebyshev-Hermite polynomial of order $m$, that is 
\begin{equation}
\alpha(K_m,x)=He_m(x).
\end{equation}
\end{theorem}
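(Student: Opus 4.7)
The plan is to compare coefficients directly against the explicit expression \eqref{eq:her-prob-expl} for $He_m(x)$. Since the matching polynomial is already written as a sum of the form $\sum_{j}(-1)^j p(K_m,j)x^{m-2j}$, it suffices to show that the combinatorial coefficient $p(K_m,j)$ equals the corresponding coefficient $\frac{m!}{2^j(m-2j)!\,j!}$ appearing in \eqref{eq:her-prob-expl}.

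First I would count $j$-matches in $K_m$ by a standard two-step selection. A $j$-match uses $2j$ distinct vertices paired into $j$ disjoint edges, and in $K_m$ every pair of vertices is already an edge, so no adjacency constraint intervenes. Thus I would first choose which $2j$ of the $m$ vertices participate, giving $\binom{m}{2j}$ options, and then partition those $2j$ vertices into $j$ unordered pairs, which is the classical pairing count $\frac{(2j)!}{2^j\,j!}$ (the $2^j$ accounts for the irrelevance of order within each pair, and the $j!$ for the irrelevance of order among the pairs). Multiplying gives
\begin{equation*}
p(K_m,j) \;=\; \binom{m}{2j}\cdot\frac{(2j)!}{2^j\,j!} \;=\; \frac{m!}{2^j(m-2j)!\,j!}.
\end{equation*}

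Next I would observe that $\nu(K_m)=\lfloor m/2\rfloor$, since any matching uses at most $m$ vertices and in $K_m$ every prospective pairing is available; this matches the summation range in \eqref{eq:her-prob-expl}. Substituting the closed form for $p(K_m,j)$ into the definition of the matching polynomial yields
\begin{equation*}
\alpha(K_m,x)\;=\;\sum_{j=0}^{\lfloor m/2\rfloor}(-1)^j\,\frac{m!}{2^j(m-2j)!\,j!}\,x^{m-2j}\;=\;m!\sum_{j=0}^{\lfloor m/2\rfloor}\frac{(-1)^j\,x^{m-2j}}{2^j(m-2j)!\,j!},
\end{equation*}
which coincides termwise with \eqref{eq:her-prob-expl}, proving $\alpha(K_m,x)=He_m(x)$.

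The main obstacle is really only the bookkeeping in the pairing count: one must be careful not to overcount ordered versus unordered pairs, nor order the $j$ edges themselves. Everything else is a direct match against the explicit expression. As a sanity check before writing up, I would verify the formula on small cases, e.g.\ $m=4$: one expects $p(K_4,1)=6$ and $p(K_4,2)=3$, giving $\alpha(K_4,x)=x^4-6x^2+3=He_4(x)$, consistent with Figure \ref{fig:hermite}.
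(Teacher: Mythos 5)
Your proposal is correct and follows essentially the same route as the paper's proof: reduce to showing $p(K_m,j)=\frac{m!}{2^j(m-2j)!\,j!}$ by first choosing the $2j$ participating vertices and then counting pairings, the paper obtaining the factor $\frac{(2j)!}{2^j j!}$ as the double factorial $(2j-1)(2j-3)\cdots 1$ where you quote the classical pairing count directly. The observation that $\nu(K_m)=\lfloor m/2\rfloor$ and the final comparison with \eqref{eq:her-prob-expl} also match the paper.
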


\begin{proof}
In the case of a complete graph $\nu(G) =\lfloor{\frac{m }{2}}\rfloor $, to see this choose an arbitrary vertex, then pick an arbitrary edge connecting this vertex. Next, excluding the vertices associated with the previous edge, choose a vertex  and an arbitrary edge connecting this vertex and not connected to the ones previously selected. This procedure can be repeated $\lfloor{\frac{m }{2}}\rfloor $ times, since in a complete graph, for any vertex one can always find another vertex not been previously used, connected to it, therefore $\nu(G)=\lfloor{\frac{m }{2}}\rfloor$.

Next, recall from equation \eqref{eq:her-prob-expl},
$$He_m(x) = \sum_{j=0}^{\left\lfloor{\frac{m}{2}}\right\rfloor}\frac{(-1)^{j}m! x^{m-2j} }{2^j (m-2j)!j!}.$$
Hence it is enough to show that,
$$P(K_m,j)=\frac{m!}{2^j (m-2j)!j!}.$$
Therefore we have to find the number of $j$-matches in a complete graph on $m$ vertices. First counting the $j$-matches, from $m$ vertices we can choose $2j$ vertices in ${m}\choose{2j}$ ways to form a $j$-match using  $2j$ vertices. Next for the matchings utilizing the $2j$ vertices, choose arbitrarily a vertex, we can form an edge with any  $2j-1$ remaining vertices excluding itself, later we choose other vertex(excluding the previous two) and form an edge with any $2j-3$ remaining vertices, continuing with this procedure we find,
\begin{eqnarray*}
P(K_m,j)&=&{{m}\choose{2j}} (2j-1)\cdot(2j-3)\cdot\ldots\cdot(5)\cdot(3)\cdot(1),\\
&=&\frac{m!}{(2j)!(m-2j)!}\cdot \frac{(2j)(2j-1)\cdot\ldots\cdot(3)\cdot(2)\cdot(1)}{(2j)(2j-2)\cdot\ldots\cdot(4)\cdot(2)\cdot(1)},\\
&=&\frac{m!(2j)!}{(2j)!(m-2j)!}\cdot \frac{1}{2^j j!},\\
&=&\frac{m!}{2^j (m-2j)!j!}.
\end{eqnarray*}
Thus the statement is proven.\end{proof}

This completes our short review of graph theory, we suggest the interested reader to see \cite{bona},\cite{trudeau} for a more comprehensive background and \cite{levit} for a more specific review of polynomials on graphs. Now we follow with the problem of integrals of products of Chebyshev-Hermite polynomials, the linearization problem of Chebyshev-Hermite polynomials and how can we use graph theory to solve them.

\subsubsection{Integrals of products of Hermite polynomials}

Suppose we would like to obtain an expression for the multiplication of two Chebyshev-Hermite polynomials of different order, we would like for this resulting polynomial to be a linear combination of Chebyshev-Hermite polynomials, that is 
\begin{equation}\label{eq:lin-problem}
He_n(x)He_m(x)=\sum_{l=0}^{m+n}a(l,m,n)He_l(x).
\end{equation}
Finding the coefficients $a(l,m,n)$ is the so-called \textit{linearization problem} of the Chebyshev-Hermite polynomials. Using the orthogonality of the Chebyshev-Hermite polynomials \eqref{eq:her-prob-orth}, we obtain,
\begin{equation}\label{eq:lin-coeff}
a(l,m,n)=\frac{1}{\sqrt{2\pi}l!}\int_{-\infty}^{\infty} e^{\frac{-x^2}{2}}He_{l}(x)He_m(x)He_{n}(x)dx.
\end{equation}
Therefore we have formulated the linearization problem as the computation of an integral of three products of Chebyshev-Hermite polynomials. To solve this problem we use all the apparatus of graph theory, in fact we employ it to solve the more general case, 
\begin{equation}\label{eq:herm-int-prod}
J(n_1,n_2,\ldots,n_k)=\int_{-\infty}^{\infty} e^{\frac{-x^2}{2}}He_{n_1}(x)He_{n_2}(x)\cdots He_{n_k}(x)dx,
\end{equation}
where $J(n_1,n_2)$ is basically the orthogonality relation  \eqref{eq:her-prob-orth} and $J(n_1,n_2,n_3)$ is essentially \eqref{eq:lin-coeff}. These kind of integrals are not only useful for the considered problem, in fact they are of fundamental importance in the theory of \iftex \textit{molecular vibrations} \fi \ifblog \href{https://en.wikipedia.org/wiki/Molecular_vibration}{molecular vibrations} \fi in Quantum Mechanics, see \cite{arfken} for more on this topic.

The idea of using graph theory to solve this kind of integrals is: Analyze certain type of graphs that have an enumerative property analogous to a recursive property of these integrals. This property  who belongs to an underlying mathematical structure(algebraic structure) allows us to compute the integrals in an indirect way.

Next we introduce the following notation,
$$\vec{n}:=(n_1,\ldots,n_k),\: J^{(i)}_{\vec{n}}:=J(n_1,\ldots ,n_{i-1},n_i-1,n_{i+1},\ldots,n_k),$$
$$J^{(ij)}_{\vec{n}}:=J(n_1,\ldots ,n_{i-1},n_i-1,n_{i+1},\ldots,n_{j-1},n_j-1,n_{j+1},\ldots,n_k).$$
Consequently we state in the following lemma the recursive property of the integrals,
\iftex
\vspace{-5pt}
\fi
\begin{lemma} \label{le:herm-int-recurrence}
The integral of products of Chebyshev-Hermite polynomials satisfy the following recurrence relation,
\begin{equation}\label{eq:herm-int-recurrence}
J_{\vec{n}}=\sum_{i=2}^k n_i J_{\vec{n}}^{(1i)}, \quad \text{with} \quad J_{\vec{0}}=\sqrt{2\pi}.
\end{equation}
\end{lemma}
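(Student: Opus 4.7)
The plan is to derive the recurrence by a single integration by parts applied to the factor $He_{n_1}(x)\, e^{-x^2/2}$ in the integrand. The base case $J_{\vec{0}} = \sqrt{2\pi}$ is immediate, since with all $n_i=0$ the integrand reduces to $e^{-x^2/2}$ and one recovers the Gaussian integral used to prove Lemma \ref{le:her-prob-orth}.

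The first technical step is to verify the identity
$$He_n(x)\, e^{-x^2/2} = -\frac{d}{dx}\!\left[He_{n-1}(x)\, e^{-x^2/2}\right], \qquad n\geq 1,$$
which follows by differentiating the right-hand side to obtain $\bigl[(n-1)He_{n-2}(x) - xHe_{n-1}(x)\bigr]e^{-x^2/2}$ via \eqref{eq:he-re-1}, and then simplifying using the three-term recurrence \eqref{eq:he-re-2}. (Equivalently, this is just the Rodrigues formula \eqref{eq:her-prob-rodri} with a single derivative peeled off.)

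With this identity in hand, I substitute it into \eqref{eq:herm-int-prod}, obtaining
$$J_{\vec{n}} = -\int_{-\infty}^{\infty} \frac{d}{dx}\!\left[He_{n_1-1}(x)\, e^{-x^2/2}\right] \prod_{i=2}^{k} He_{n_i}(x)\, dx.$$
Integration by parts produces no boundary contribution because $e^{-x^2/2}$ multiplied by any polynomial vanishes at $\pm\infty$, so
$$J_{\vec{n}} = \int_{-\infty}^{\infty} He_{n_1-1}(x)\, e^{-x^2/2}\, \frac{d}{dx}\!\left[\prod_{i=2}^{k} He_{n_i}(x)\right] dx.$$
Expanding the derivative by the product rule and replacing each $He'_{n_i}(x)$ by $n_i He_{n_i-1}(x)$ via \eqref{eq:he-re-1} yields precisely $\sum_{i=2}^{k} n_i J_{\vec{n}}^{(1i)}$, which is the claimed recurrence.

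I do not anticipate a serious obstacle; this is essentially the same Rodrigues-plus-integration-by-parts trick used in the proof of Lemma \ref{le:her-prob-orth}, except one peels off a single derivative rather than $n_1$ of them. The one subtle point is that the derivation requires $n_1 \geq 1$ in order for $He_{n_1-1}$ to make sense; however, the integral $J_{\vec{n}}$ is manifestly symmetric under permuting its indices, so the recurrence applies as soon as any $n_i \geq 1$, while the base case $J_{\vec{0}} = \sqrt{2\pi}$ takes care of the remaining situation.
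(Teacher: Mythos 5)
Your proposal is correct and follows essentially the same route as the paper: a single integration by parts on the factor $He_{n_1}(x)e^{-x^2/2}$ (equivalently, the Rodrigues formula with one derivative peeled off), followed by the product rule and the recurrence \eqref{eq:he-re-1}. Your explicit verification of the identity $He_n(x)e^{-x^2/2}=-\frac{d}{dx}[He_{n-1}(x)e^{-x^2/2}]$ via \eqref{eq:he-re-1} and \eqref{eq:he-re-2}, and your remark about the case $n_1=0$ being handled by symmetry and the base case, are minor tidy additions but do not change the argument.
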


\begin{proof}
Clearly $J_{\vec{0}}=J(0,0,\ldots,0)$ is the normal integral, since $He_0(x)=1$, then
$$J_{\vec{0}}=\int_{-\infty}^{\infty} e^{\frac{-x^2}{2}}dx=\sqrt{2\pi}.$$
Now for the general case we use a technique similar to that of the proof from Lemma \ref{le:her-prob-orth},\iftex\break\fi first we write the term $He_{n_1}(x)$ using Rodrigues formula \eqref{eq:her-prob-rodri}, then $J_{\vec{n}}$ is,
$$J_{\vec{n}}=\int_{-\infty}^{\infty} (-1)^{n_1} \frac{d^{n_1} }{d x^{n_1}}\left( e^{\frac{-x^2}{2}}\right)He_{n_2}(x)\cdots He_{n_k}(x)dx.$$
Integrating by parts,
\begin{eqnarray*}
J_{\vec{n}}&=&\int_{-\infty}^{\infty} (-1)^{n_1-1} \frac{d^{n_1-1} }{d x^{n_1-1}}\left( e^{\frac{-x^2}{2}}\right) \frac{d }{d x}\left[He_{n_2}(x)\cdots He_{n_k}(x)\right]dx,\\
&=&\int_{-\infty}^{\infty}e^{\frac{-x^2}{2}}e^{\frac{x^2}{2}} (-1)^{n_1-1} \frac{d^{n_1-1} }{d x^{n_1-1}}\left( e^{\frac{-x^2}{2}}\right)\sum_{i=2}^k He'_{n_i}(x)\prod_{\substack{j=2\\j \neq i}}^k He_{n_j}(x)dx,
\end{eqnarray*}
where we used the fact that $e^{\frac{-x^2}{2}}$ and all its derivatives goes to zero for infinite $x$ and the product rule. Now using the recurrence relation \eqref{eq:he-re-1},
\begin{eqnarray*}
J_{\vec{n}}&=&\sum_{i=2}^kn_i\int_{-\infty}^{\infty}e^{\frac{-x^2}{2}} He_{n_1-1}(x)He_{n_i-1}(x)\prod_{\substack{j=2\\j \neq i}}^k He_{n_j}(x)dx,\\
&=&\sum_{i=2}^k n_i\int_{-\infty}^{\infty}e^{\frac{-x^2}{2}}He_{n_1-1}(x)He_{n_2}(x)\cdots He_{n_{i-1}}(x)He_{n_i-1}(x)He_{n_{i+1}}(x)\cdots He_{n_k}(x) dx,\\
&=&\sum_{i=2}^k n_i J_{\vec{n}}^{(1i)},
\end{eqnarray*}
\iftex
\vspace{-20pt}
\fi

where we used the Rodrigues formula again, and the definition of $J_{\vec{n}}^{(ij)}$, therefore the lemma is proven.
\end{proof}

Next we define a very special graph, whose number of complete matches has the same recursive propery of the integrals, i.e. it also satisfies the recurrence relation of the previous lemma,
\begin{definition}
The \iftex \textbf{complete $k$-partite} or \textbf{multipartite graph} \fi \ifblog \href{http://mathworld.wolfram.com/Completek-PartiteGraph.html}{complete} $k$-\href{http://mathworld.wolfram.com/Completek-PartiteGraph.html}{partite} or \href{https://en.wikipedia.org/wiki/Multipartite_graph}{multipartite graph}\fi on $V$, \iftex \linebreak \fi where  $V_1 \cup V_2 \cup \cdots \cup V_k=V$, and $V_1,\ldots,V_k$ are pairwise disjoint vertex sets, is the graph constructed from $V$, in which we put an edge between any pair of vertices that do not belong to the same $V_i$. We denote $\vert V_i \vert=n_i$ and $\vert V \vert=\sum_{i=1}^kn_i$.
\end{definition}
In the following figure we show a couple of examples of $k$-partite graphs,
\iftex
\vspace{-10pt}
\fi
\ifblog
\begin{figure}[h!]
\image{width = 624}{https://keithpatarroyo.files.wordpress.com/2018/09/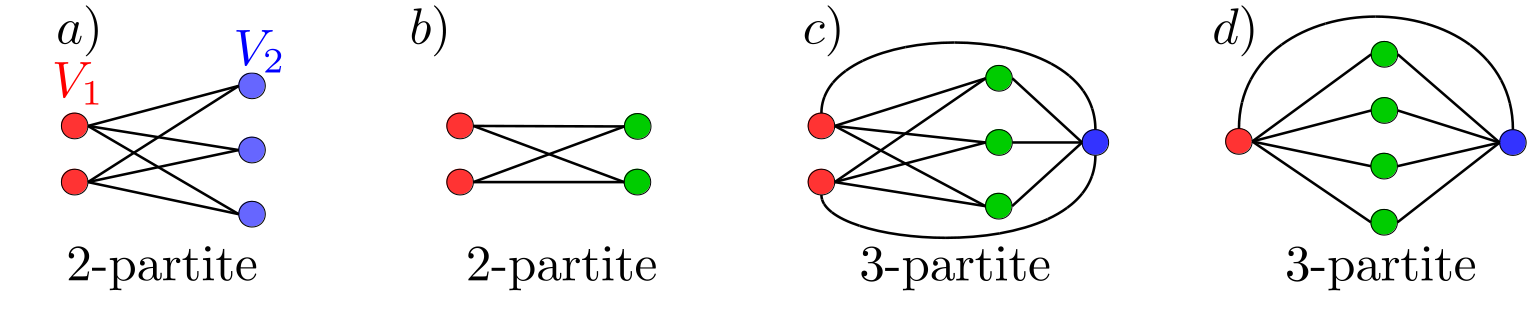}{partite-graphs.png}
\captionof{figure}{Examples of $2$-partite and $3$-partite graphs. \emo}\label{fig:k-partite-graphs}
\end{figure}
\fi
\iftex
\begin{figure}[H]
\floatbox[{\capbeside\thisfloatsetup{capbesideposition={left,center},capbesidewidth=1.3cm}}]{figure}[\FBwidth]
{\caption{\color{white}.}\label{fig:k-partite-graphs}}
{\includegraphics[width=14cm]{partite-graphs.png}}
\end{figure}
\fi
\iftex
\vspace{-20pt}
\fi
We denote with $P(n_1,n_2,\ldots,n_k)=P_{\vec{n}}$, as the total number of \textit{complete} $k$-matches in a $k$-partite graph, we set $P_{\vec{0}}=1$ according to Definition \ref{def:k-matches}. This number, that encodes certain enumerative features of the graph, is the property that shares the algebraic structure with $J_{\vec{n}}$.

As an example we see in Figure \ref{fig:k-partite-graphs} we have a) $P_{\vec{n}}=0$, b) $P_{\vec{n}}=1$, c) $P_{\vec{n}}=6$, d) $P_{\vec{n}}=0$. In general for an odd $\vert V \vert$, $P_{\vec{n}}=0$, the argument can be easily seen from Figure \ref{fig:k-partite-graphs}.a).

Similarly to $J_{\vec{n}}$ we note,
$$ P^{(i)}_{\vec{n}}:=P(n_1,\ldots ,n_{i-1},n_i-1,n_{i+1},\ldots,n_k),$$
$$P^{(ij)}_{\vec{n}}:=P(n_1,\ldots ,n_{i-1},n_i-1,n_{i+1},\ldots,n_{j-1},n_j-1,n_{j+1},\ldots,n_k).$$
From this notation we can speculate that a similarity will appear with $J_{\vec{n}}$ as we analyse the recursive properties of $P_{\vec{n}}$ when we vary $\vec{n}$. Indeed we show now that $P_{\vec{n}}$ also satisfies the same recurrence relation \eqref{eq:herm-int-recurrence},
\begin{lemma} \label{le:k-match-recurrence}
The total number of complete $k$-matches in a $k$-partite graph satisfies the following recurrence relation,
\begin{equation}\label{eq:k-match-recurrence}
P_{\vec{n}}=\sum_{i=2}^k n_i P_{\vec{n}}^{(1i)}, \quad \text{with} \quad P_{\vec{0}}=1.
\end{equation}
\end{lemma}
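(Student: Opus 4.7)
The plan is to mirror the structural argument used for Lemma \ref{le:herm-int-recurrence}: isolate one distinguished vertex, partition the set of complete matchings according to which vertex is paired with it, and recognize each class as a complete matching of a smaller complete multipartite graph.

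First I would dispose of the base case: when $\vec{n} = \vec{0}$, the vertex set $V$ is empty, so the unique matching is the empty set of edges, which by convention is a complete matching. This gives $P_{\vec{0}} = 1$. For the recursive step, I would assume $\vec{n} \neq \vec{0}$ and, without loss of generality (relabeling parts if needed), assume $n_1 \geq 1$ so that $V_1$ contains at least one vertex. Fix an arbitrary vertex $v \in V_1$. In any complete $k$-match of the graph, $v$ must be incident to exactly one edge. Since edges only exist between vertices in distinct parts $V_i$, the partner of $v$ lies in $V_2 \cup \cdots \cup V_k$.

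Next I would partition the complete matchings according to the part $V_i$ ($i \geq 2$) that contains the partner of $v$. For each fixed $i$, there are $n_i$ possible choices of partner $w \in V_i$. Once the edge $\{v, w\}$ is selected, the remaining vertices form a complete $k$-partite graph with part sizes $(n_1 - 1, n_2, \ldots, n_{i-1}, n_i - 1, n_{i+1}, \ldots, n_k)$; indeed, deleting two vertices from two different parts of a complete multipartite graph yields a complete multipartite graph on the reduced parts (the edge set between distinct surviving parts is still complete). The number of complete matchings of this reduced graph is exactly $P_{\vec{n}}^{(1i)}$ by definition.

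Summing over the disjoint classes yields
\begin{equation*}
P_{\vec{n}} \;=\; \sum_{i=2}^{k} n_i\, P_{\vec{n}}^{(1i)},
\end{equation*}
which is the claimed recurrence. The main (very mild) obstacle is the bookkeeping observation that removing one vertex from $V_1$ and one from $V_i$ leaves behind a complete multipartite graph of the right type, so that the $P_{\vec{n}}^{(1i)}$ notation does describe the count of matchings of the residual graph; once this is noted, the identity is an immediate consequence of counting matchings by the partner of a fixed vertex. If $n_1 = 0$, one would instead pick any other nonempty part and note that the same reasoning, combined with the symmetry of $P_{\vec{n}}$ in its arguments, reduces to the stated form after relabeling.
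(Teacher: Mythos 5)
Your proposal is correct and follows essentially the same argument as the paper: fix a vertex in $V_1$, classify complete matchings by the partner vertex ($n_i$ choices in each $V_i$), and identify the residual count as $P_{\vec{n}}^{(1i)}$. Your version is in fact slightly more careful than the paper's, since you explicitly verify that the residual graph is again complete multipartite and address the degenerate case $n_1=0$, but the underlying decomposition is identical.
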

\begin{proof}
Choose an arbitrary vertex in $V_1$, then pair it with an abitrary vertex in $V_i$, if we fix the vertex in $V_1$, then we would  have $n_i$ ways to choose the vertex in $V_i$. After this construction, the number of ways to generate a complete match is $P_{\vec{n}}^{(1i)}$, since we have to account for all the posibilities, we add all the ways we could have chosen another vertex in the other $k$ vertex sets, then,
$$P_{\vec{n}}=\sum_{i=2}^k n_i P_{\vec{n}}^{(1i)}.$$
For this proof we fixed the vertex in $V_1$, but clearly we could have also fixed the vertex in $V_i$ and obtained another relation, i.e. we could have counted the matches in this situation as well. Since  $P_{\vec{0}}=1$ by definition, we are done.
\end{proof}

Finally we obtain the last bridge between $J_{\vec{n}}$ and $P_{\vec{n}}$, if we recognize that both functions satisfy the same recurrence relation and apply induction a couple of times, the following result follows,
\begin{theorem} \label{th:k-partite-int-recurrence}
The functions $J_{\vec{n}}$ and $P_{\vec{n}}$ are related by,
\ifkeith
\marginnote{\footnotesize This theorem is not as straightforward a it seems ! }
\fi
\begin{equation}\label{eq:k-partite-int-recurrence}
J_{\vec{n}}=\sqrt{2\pi}P_{\vec{n}}.
\end{equation}
\end{theorem}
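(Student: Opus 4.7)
The plan is to prove $J_{\vec{n}}=\sqrt{2\pi}\,P_{\vec{n}}$ by strong induction on the total $|\vec{n}|:=n_1+\cdots+n_k$, exploiting the fact that Lemmas \ref{le:herm-int-recurrence} and \ref{le:k-match-recurrence} give $J_{\vec{n}}$ and $P_{\vec{n}}$ the same linear recurrence with initial data differing only by the factor $\sqrt{2\pi}$.

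For the base case $|\vec{n}|=0$, i.e.\ $\vec{n}=\vec{0}$, the two lemmas directly yield $J_{\vec{0}}=\sqrt{2\pi}=\sqrt{2\pi}\,P_{\vec{0}}$. For the inductive step, assume the claim for every $\vec{m}$ with $|\vec{m}|<|\vec{n}|$. First I would observe that both $J_{\vec{n}}$ and $P_{\vec{n}}$ are invariant under permutations of the entries of $\vec{n}$: for $J$ this is immediate from \eqref{eq:herm-int-prod} since the product of Chebyshev-Hermite factors under the integral is commutative, and for $P_{\vec{n}}$ it follows from merely relabelling the parts $V_1,\ldots,V_k$ of the $k$-partite graph. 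This symmetry lets me choose, without loss of generality, some index $j$ with $n_j\geq 1$ and relabel so that $j=1$; such a $j$ exists whenever $\vec{n}\neq\vec{0}$, which is our current hypothesis.

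Applying Lemma \ref{le:herm-int-recurrence} and Lemma \ref{le:k-match-recurrence} with this choice,
\begin{equation*}
J_{\vec{n}}=\sum_{i=2}^{k}n_i\,J_{\vec{n}}^{(1i)},\qquad P_{\vec{n}}=\sum_{i=2}^{k}n_i\,P_{\vec{n}}^{(1i)}.
\end{equation*}
Each non-vanishing term on the right involves an index vector $\vec{n}^{(1i)}$ with $|\vec{n}^{(1i)}|=|\vec{n}|-2<|\vec{n}|$, so the inductive hypothesis applies and gives $J_{\vec{n}}^{(1i)}=\sqrt{2\pi}\,P_{\vec{n}}^{(1i)}$. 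Substituting term by term,
\begin{equation*}
J_{\vec{n}}=\sqrt{2\pi}\sum_{i=2}^{k}n_i\,P_{\vec{n}}^{(1i)}=\sqrt{2\pi}\,P_{\vec{n}},
\end{equation*}
which closes the induction.

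The main (and only) obstacle is ensuring the pivot coordinate in the recurrences is at least $1$: Lemma \ref{le:herm-int-recurrence} was proved by integrating by parts $n_1$ times, and Lemma \ref{le:k-match-recurrence} by selecting a vertex in $V_1$, so both require $n_1\geq 1$. This is precisely why the symmetry observation is essential, as it lets me rename whichever coordinate happens to be positive as the first one. After that, the argument is a textbook induction and no further analytic input is needed; in particular, the parity fact that $J_{\vec{n}}$ and $P_{\vec{n}}$ both vanish when $|\vec{n}|$ is odd is automatically recovered, since the recurrence drives such a vector down to $|\vec{n}|=1$, at which stage the sum on the right is empty.
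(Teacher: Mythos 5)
Your proof is correct and follows exactly the route the paper sketches (the paper only remarks that both quantities satisfy the same recurrence and that induction ``a couple of times'' finishes the job, without writing out the details). The one genuine subtlety --- that the recurrences of Lemmas \ref{le:herm-int-recurrence} and \ref{le:k-match-recurrence} pivot on the first coordinate and hence require $n_1\geq 1$, which you repair by invoking the permutation invariance of $J_{\vec{n}}$ and $P_{\vec{n}}$ --- is precisely the point the margin note (``not as straightforward as it seems'') alludes to, and you handle it correctly, including the degenerate cases where all terms of the sum vanish.
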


An alernative proof of the previous theorem is discussed in \cite{spe-func}. From this result we can go ahead and compute any integral with products of Chebyshev-Hermite polynomials if we know how to compute $P_{\vec{n}}$ first, thereby transfering the most challenging part to graph theory and using the common algebraic framework to terminate it. Next we refocus in a couple special cases of $J_{\vec{n}}$ and compute $P_{\vec{n}}$ for these cases.
\begin{theorem} \label{th:2-partite-k-match}
The number of complete matches for a $2$-partite graph with $\vert V_1 \vert =n$ and $\vert V_2 \vert =m$ is given by,
\begin{equation}\label{eq:2-partite-k-match}
P(m,n)=m!\delta_{mn},
\end{equation}
when $m+n$ is even, and $P(m,n)=0$ otherwise.
\end{theorem}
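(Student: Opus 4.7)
The plan is to dispose of the parity and structural obstructions first, and then count directly in the remaining case $m=n$.

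Since a complete match is a disjoint union of edges that covers every vertex, the total vertex count $m+n$ must be even; otherwise no complete match can exist and $P(m,n)=0$, which handles the odd-sum case. For $m+n$ even but $m\neq n$, I would invoke the bipartite structure: every edge of a $2$-partite graph joins a vertex of $V_1$ to a vertex of $V_2$, so in any matching the matched vertices of $V_1$ are paired one-to-one with the matched vertices of $V_2$. A \emph{complete} match uses all vertices, so it induces a bijection between $V_1$ and $V_2$, which forces $|V_1|=|V_2|$. Hence $P(m,n)=0$ whenever $m\neq n$ (which also subsumes the $m+n$ odd case).

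It remains to treat $m=n$. By the observation above, a complete match of the complete $2$-partite graph on $V_1\cup V_2$ with $|V_1|=|V_2|=n$ is precisely a bijection $V_1\to V_2$, so $P(n,n)=n!$, in agreement with $m!\,\delta_{mn}$. As a cross-check, Lemma~\ref{le:k-match-recurrence} with $k=2$ collapses to the single-term recurrence $P(n,n)=n\,P(n-1,n-1)$, and combined with the base case $P(0,0)=1$ this yields the same answer by induction on $n$.

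I expect no real obstacle in this proof; the bipartite structure carries the entire argument, and the only care required is in organizing the case split cleanly and pointing out that a complete match in a bipartite graph is forced to be a bijection between the two parts.
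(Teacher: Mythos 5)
Your proof is correct and follows essentially the same route as the paper: the paper likewise observes that $m\neq n$ leaves a vertex unmatched (so $P(m,n)=0$) and that for $m=n$ a complete match amounts to placing $m$ distinct balls into $m$ distinct one-ball boxes, i.e.\ a bijection, giving $m!$. Your added cross-check via Lemma~\ref{le:k-match-recurrence} is a nice bonus but not needed.
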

\begin{proof}
If $m\neq n$, a complete match cannot be achieved since there would always be a vertex that is not used given any $k$-match. If $m=n$, clearly is the same problem of insterting $m$ different balls in $m$ different one-ball boxes, therefore there are $m!$ ways,
$$P(m,n)=m!\delta_{mn},$$
this proves the lemma.
\end{proof}

With equations \eqref{eq:herm-int-prod}, \eqref{eq:k-partite-int-recurrence} and \eqref{eq:2-partite-k-match} we obtain the orthogonality relation of the Chebyshev-Hermite polynomials,
$$J(m,n)=\int_{-\infty}^{\infty} e^{\frac{-x^2}{2}}He_m(x)He_{n}(x)dx=\sqrt{2\pi}P(m,n)=\sqrt{2\pi} m! \delta_{mn}.$$
Reproducing the results of \eqref{eq:her-prob-orth}. With the previous experience on hand, we are ready to resume our solution to the linearization problem of the Chebyshev-Hermite polynomials and compute \eqref{eq:lin-coeff}, this result is a consequence of the following theorem,
\begin{theorem} \label{th:3-partite-k-match}
The number of complete matches for a $3$-partite graph with $\vert V_1 \vert =l$, $\vert V_2 \vert =m$, $\vert V_3 \vert =n$ and $s=\frac{l+m+n}{2}$ is given by,
\begin{equation}\label{eq:3-partite-k-match}
P(l,m,n)=\frac{l!m!n!}{(s-l)!(s-m)!(s-n)!},
\end{equation}
when $l+m+n$ is even and if $l,m,n$ satisfy the so-called triangle property, that is the sum of any two of $l,m,n$ is not greater than the third. $P(l,m,n)=0$ is zero otherwise.
\end{theorem}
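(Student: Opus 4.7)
The plan is a direct bijective count, classifying the edges of any complete matching of the tripartite graph by which pair of vertex sets they join. Let $a$, $b$, $c$ denote the number of matching edges running between $V_2V_3$, $V_1V_3$, and $V_1V_2$ respectively. Perfectness forces $b+c=l$, $a+c=m$, $a+b=n$, a linear system whose unique solution is $a=s-l$, $b=s-m$, $c=s-n$. Nonnegativity of this triple is exactly the triangle inequality, so if that inequality fails no matching can exist. Likewise if $l+m+n$ is odd then $s$ is not an integer and the system has no integer solutions, giving the other vanishing case.

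For the main case I would build every complete matching in stages. First, split $V_1$ into a $b$-subset that will be matched into $V_3$ and a complementary $c$-subset that will be matched into $V_2$; this gives $\tfrac{l!}{b!c!}$ choices. Second, pair the chosen $b$ vertices of $V_1$ bijectively with a $b$-subset of $V_3$, contributing $\tfrac{n!}{(n-b)!}=\tfrac{n!}{a!}$, and symmetrically pair the chosen $c$ vertices of $V_1$ with a $c$-subset of $V_2$, contributing $\tfrac{m!}{a!}$. Finally, the $a$ leftover vertices of $V_2$ must be bijectively matched with the $a$ leftover vertices of $V_3$, contributing $a!$. Multiplying the four factors gives $\tfrac{l!m!n!}{a!b!c!}$; substituting $a=s-l$, $b=s-m$, $c=s-n$ produces the asserted closed form.

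As a consistency check, and more in the spirit of the preceding subsection, one can alternatively verify the formula by induction on $l+m+n$ using Lemma~\ref{le:k-match-recurrence}, which in this tripartite setting reads $P(l,m,n)=mP(l-1,m-1,n)+nP(l-1,m,n-1)$. A short calculation (noting $s$ drops by one on the right) reduces the recurrence to the arithmetic identity $(s-m)+(s-n)=l$, and the base case $P(0,0,0)=1$ is immediate. The main obstacle in the recurrence route is careful bookkeeping at the boundary, where the triangle inequality becomes tight for one of the reduced triples and forces a summand on the right-hand side to vanish; the constructive count avoids this nuisance entirely, which is why I would present it as the primary argument and relegate the inductive verification to a brief remark.
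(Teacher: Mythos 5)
Your proposal is correct and follows essentially the same route as the paper: both classify the edges of a complete matching by which pair of parts they join, solve the resulting linear system to get $s-l$, $s-m$, $s-n$ edges of each type (with the parity and triangle conditions falling out as integrality and nonnegativity), and then count the matchings by a staged product of choices, yielding $\frac{l!m!n!}{(s-l)!(s-m)!(s-n)!}$. Your derivation of the edge counts from the symmetric system $b+c=l$, $a+c=m$, $a+b=n$ is a little cleaner than the paper's ``without loss of generality $m\geq n$'' bookkeeping, and your supplementary inductive check via Lemma \ref{le:k-match-recurrence} is a valid bonus, but the core argument is the same.
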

\begin{proof}
As noted before if $\vert V \vert =l+m+n$ is odd, then $P(l,m,n)=0$, if $l,m,n$ do not satisfy the triangle property $P(l,m,n)$ is also zero, the general argument can be established analyzing Figure \ref{fig:k-partite-graphs}.d).

Now assume without loss of generality that $m\geq n$, clearly when we have matched all the vertices of $V_1$ with $V_2$ and $V_3$, the same number of vertices in $V_2$ and $V_3$ must remain, that is if $x$ denotes the number of $V_1,V_2$ pairs and $y$ the number of $V_1,V_3$ pairs, we have,
$$m-x=n-y \Longrightarrow  m-n=x-y\qquad \text{and}\quad m\geq n\Rightarrow x\geq y.$$
Hence there are more $V_1,V_2$ pairs than $V_1,V_3$ pairs, in fact there are $m-n$ more pairs. Also we have $x+y=l$, finding $x$ and $y$ explicitly from this and the previous equality,
$$2x=l+m-n \Longrightarrow x=\frac{l+m-n}{2}+n-n=\frac{l+m+n}{2}-n=s-n,$$
$$2y=l+n-m \Longrightarrow y=\frac{l+n-m}{2}+m-m=\frac{l+m+n}{2}-m=s-m.$$
Since we have a complete $k$-match, there are $s$ total pairs, recalling that $x+y=l$, then the total number of $V_2,V_3$ pairs equals $s-l$.

Having developed the previous notation, we can start counting the total number of complete matches. If we consider $V_1$, we have first to choose $x=s-n$ vertices, then we have $l\choose{s-n}$ ways to pair them with vertices in $V_2$. Similarly for vertices of $V_2$ with vertices in $V_3$,$m\choose{s-l}$ and likewise vertices of $V_3$ with vertices in $V_1$,$n\choose{s-m}$. 

Considering the two sets $V_1$ and $V_2$ similarly as with the two sets from the proof of  Theorem \ref{th:2-partite-k-match}, we have $(s-n)!$ ways to do the pairings, furthermore for $V_1$ and $V_3$, we have $(s-m)!$ ways and for $V_2$ and $V_3$, we have $(s-l)!$ ways, all the previous implies,
\begin{eqnarray*}
P(l,m,n)&=&{l\choose{s-n}}{m\choose{s-l}}{n\choose{s-m}}(s-n)!(s-m)!(s-l)!,\\
&=&\frac{l!m!n!(s-n)!(s-m)!(s-l)!}{(s-n)!(l-s+n)!(s-l)!(m-s+l)!(s-m)!(n-s+m)!},\\
&=&\frac{l!m!n!}{(l-s+n+m-m)!(m-s+l+n-n)!(n-s+m+l-l)!},\\
&=&\frac{l!m!n!}{(s-m)!(s-n)!(s-l)!}.
\end{eqnarray*}
Therefore we are done.\end{proof}

Using \eqref{eq:k-partite-int-recurrence} and \eqref{eq:3-partite-k-match} we obtain, 
$$J(l,m,n)=\int_{-\infty}^{\infty} e^{\frac{-x^2}{2}}He_l(x)He_m(x)He_{n}(x)dx=\frac{\sqrt{2\pi}l!m!n!}{(s-m)!(s-n)!(s-l)!},$$
\iftex
\vspace{-5pt}
\fi
if $(l,m,n)$ satisfy the triangle property,  $m+n+l$ is even, and $J(l,m,n)=0$ otherwise. 

Using this equation we can finally solve the linearization problem of the Chebyshev-Hermite polynomials \eqref{eq:lin-problem}, \eqref{eq:lin-coeff},
\iftex
\vspace{-5pt}
\fi
\begin{equation}\label{eq:lin-formula-sol}
a(l,m,n)=\frac{m!n!}{(s-m)!(s-n)!(s-l)!}, \quad He_m(x)He_{n}(x) = \sum_{l=0}^{m+n} a(l,m,n)He_{l}(x),
\end{equation}
\iftex
\vspace{-10pt}
\fi
where the same restricions apply to $a(l,m,n)$ for different values of $l,m,n$.

We can obtain a nicer looking formula by letting $j=s-l$ and noting that,
$$l=m+n-2j,\quad s-m=n-j,\quad s-n=m-j.$$
Next to find constraints on $j$, we apply the constraints of $(m,l,n)$, first since $m+l+n$ is even, we have that $s \in \mathbb{N}$ and as a consequence $j \in \mathbb{N}$ .Also by using the triangle property we find,
$$0\leq s-m\leq n-j \Rightarrow j\leq n,\quad 0\leq s-n\leq m-j \Rightarrow j\leq m,\quad 0\leq s-l=j.$$
\ifkeith
\marginnote{\scriptsize An intermediate step of this procedure could be, $\sum_{j=0}^{\frac{m+n}{2}}a_{m,n,j}He_{m+n-2j}$ where if $j>\min(m,n)$ then $a_{m,n,j}= 0$ and\\ 
$a_{m,n,j}= \frac{m!n!j!}{(m-j)!(n-j)!j!j!}$ otherwise.} 
\fi
Therefore we find that $j\leq \min(m,n)$, note that if $l=0$, then $m+n=2j$; this implies that by taking as last value $j=\min(m,n)$, then $2j$ would only reach $m+n$ if $m=n$. In addition if $l=m+n$ then $j=0$, all of the previous implies that \eqref{eq:lin-formula-sol} can be written as,
$$He_m(x)He_{n}(x) = \sum_{j=0}^{\min(m,n)} \frac{m!n!j!He_{m+n-2j}(x)}{(m-j)!(n-j)!j!j!}.$$
Using the definition of the binomial coefficient we obtain a good looking expression for the linearization problem of the Chebyshev-Hermite polynomials,
\begin{equation}\label{eq:lin-formula-sol-2}
He_m(x)He_{n}(x) = \sum_{j=0}^{\min(m,n)}{m\choose{j}} {n\choose{j}} j!He_{m+n-2j}(x)
\end{equation}
Thus reproducing the known result from \cite{nist-sp-fun}. For two alternative solutions to the linearization problem of the Chebyshev-Hermite polynomials see \cite{spe-func} or \cite{luo}. A third form is proposed in the last exercise,
\begin{exercise} \label{lin-formula-sol-3}
Obtain \eqref{eq:lin-formula-sol} or \eqref{eq:lin-formula-sol-2} using  the product of  two generating functions \eqref{eq:herm-prob-gen} of the Chebyshev-Hermite polynomials, $G(x,t_1)\cdot G(x,t_2)$.

Suggestion: Compare the coefficients that are obtained in the multiplication of the series that represent the generating functions and the expansion of the explicit multiplication, $e^{x(t_1+t_2)-\frac{(t_1+t_2)^2}{2}}\cdot e^{t_1t_2}$.

\end{exercise}

At this point we end the discussion about the Chebyshev-Hermite polynomials, we tried to be as broad as possible to cover different fields where the polynomials take a significant role. Sadly some interesting statistics discussions regarding \iftex Pearson systems \fi\ifblog \href{https://en.wikipedia.org/wiki/Pearson_distribution}{Pearson systems} \fi  or \iftex Edgeworth series \fi\ifblog \href{https://en.wikipedia.org/wiki/Edgeworth_series}{Edgeworth} \fi were not considered, for these see \cite{blinnikov},\cite{johnson}. Also both the relation between the \iftex normal correlation function \fi\ifblog\href{https://en.wikipedia.org/wiki/Correlation_and_dependence\#Correlation_and_independence}{normal correlation function}\fi of the \iftex Bivariate Normal distribution \fi \ifblog\href{https://en.wikipedia.org/wiki/Multivariate_normal_distribution\#Bivariate_case}{Bivariate Normal distribution}\fi and the \iftex Mehler formula \fi \ifblog\href{https://en.wikipedia.org/wiki/Mehler_kernel\#Probability_version}{Mehler formula}\fi \cite{kendall},\cite{kibble} and the use of the polynomials to solve different cases of the \iftex Focker-Planck-Kolmogorov(FPK)\fi \ifblog \href{https://en.wikipedia.org/wiki/Fokker\%E2\%80\%93Planck_equation}{Focker-Planck-Kolmogorov(FPK)}\fi equation, in particular for the \iftex Ornstein–Uhlenbeck \fi \ifblog \href{https://en.wikipedia.org/wiki/Ornstein\%E2\%80\%93Uhlenbeck_process}{Ornstein–Uhlenbeck}\fi process \cite{pavliotis},\cite{risken}  were not covered. Nevertheless we hope that the reader obtained some appreciation for  \iftex\calligra The {Chebyshev-Hermite} Polynomials \normalfont\fi \ifblog the \textit{Chebyshev-Hermite} polynomials\fi .  

\footreferences

\end{document}